\documentclass[letterpaper,10pt]{article}
\usepackage{setspace}  

\usepackage[colorlinks=true,linkcolor=blue,citecolor=red]{hyperref} 

\usepackage{amsmath}

\usepackage{float}
\usepackage{amssymb}
\usepackage{amsfonts}
\usepackage{graphicx}
\usepackage{epstopdf}
\usepackage{color}
\usepackage{multirow}
\usepackage[title]{appendix}
\usepackage{arydshln} 

\graphicspath{{figures/}}
\usepackage{subcaption}
\usepackage{enumitem}

\usepackage{xcolor, colortbl}
\definecolor{Gray}{gray}{0.90}
\definecolor{LightCyan}{rgb}{0.65,1,1}
\definecolor{Cyan}{rgb}{0.95,1,1}
\definecolor{GRAY}{gray}{0.75}

\usepackage[T1]{fontenc}
\usepackage[utf8]{inputenc}
\usepackage[font=small,labelfont=bf]{caption}

\usepackage[numbers,sort&compress]{natbib}

\newcommand{\bsub}{\begin{subequations}}
\newcommand{\esub}{\end{subequations}$\!$}

\numberwithin{equation}{section}

\usepackage[numbers,sort&compress]{natbib}

\DeclareTextFontCommand{\textcyr}{\cyr}

\usepackage{tabularx}
\usepackage{dirtytalk}
\usepackage{amsthm}
\usepackage{wrapfig}
\usepackage{mathtools}
\usepackage{bbm}

\usepackage[format=plain,
            labelfont={it},
            textfont=it]{caption}

\numberwithin{equation}{section}
\newtheorem{theorem}{Theorem}[section]
\newtheorem{conj}[theorem]{Conjecture}
\newtheorem{lemma}[theorem]{Lemma}

\newtheorem{cor}[theorem]{Corollary}
\newtheorem{assumption}[theorem]{Assumption}
\newtheorem{remark}[theorem]{Remark}

\newcommand{\bbR}{\mathbb{R}}

\newcommand{\bbN}{\mathbb{N}}
\newcommand{\bbZ}{\mathbb{Z}}

\newcommand{\bbC}{\mathbb{C}}
\newcommand{\cO}{\mathcal{O}}
\newcommand{\cL}{\mathcal{L}}

\newcommand{\range}{\mathrm{range}}
\newcommand{\dm}{\mathbbm{d}}
\renewcommand{\Re}{\,\mathrm{Re}\,}
\renewcommand{\Im}{\,\mathrm{Im}\,}

\newcommand{\vu}{{\bf{u}}}

\newcommand{\vF}{{\bf{F}}}

\newcommand{\csch}{\, \mathrm{csch}}
\newcommand{\sech}{\, \mathrm{sech}}
\newcommand{\fint}{f_\text{int}}

\begin{document}
\begin{center}
{\fontsize{14}{14}\fontfamily{cmr}\fontseries{b}\selectfont{
Dynamical behavior of diffusively coupled scalar differential equations as Wentzell boundary conditions}}\\[0.2in]
Merlin Pelz$^{\dagger}$\\[0.1in]
\textit{\footnotesize
$^\dagger$School of Mathematics, University of Minnesota, Minneapolis, 206 Church St SE, MN 55455, USA \\ (mpelz@umn.edu)
}
\end{center}

\begin{abstract}
Two identical scalar dynamical systems coupled through a scalar diffusion equation are studied herein, with respect to bifurcations from a symmetric steady-state to symmetric and asymmetric steady-states and to in-phase and anti-phase oscillations. Numerical continuations based on the developed theory show the shape of the bifurcation branches and the attracting nonlinear states far from bifurcation onset and confirm their, for the most part, derived stability. This study extends the work on the dynamical properties of a single scalar dynamical system coupled to its own delay through an adjacent diffusion field and quantifies further the delay of diffusive information transmission between two such Wentzell boundaries. The mathematical system is motivated by modeling biological membranes with yet unknown effective local fluxes that are coupled to bulk diffusion.
\end{abstract}

\section{Introduction}\label{sec:intro}
Consider a simple diffusion equation for $u(t, x) \in \bbR$, $t\in \bbR$, in the bounded spatial interval $(0, L)$, $L > 0$, coupled to two dynamic, also called Wentzell, boundary conditions,
\begin{subequations} \label{eq:sys}
\begin{eqnarray}
    \partial_t u &=& \partial_{xx}u - \sigma^2 u, \quad x\in(0, L), \label{eq:diff} \\
    u(t,0) = u^-(t), && u(t,L) = u^+(t), \label{eq:bc} \\
    \frac{d}{dt} u^- = f(u^-,\partial_n u(t,0),\mu), && \frac{d}{dt} u^+ = f(u^+,\partial_n u(t,L),\mu).\label{eq:dynb}
\end{eqnarray}
\end{subequations}
where $\sigma^2 \geq 0$ is the degradation rate in the interior of the domain and $\partial_n$ denotes the outward normal derivative with outward normal vector $n$. All dynamical transitions of \eqref{eq:sys} when varying the parameter $\mu \in \bbR$ can be found by first searching for transitions to oscillatory states branching off a steady-state $u_*(x)$ with $u_*(0) = u_*^-$ and $u_*(L) = u_*^+$ that solves \eqref{eq:sys}. In this way, three key assumptions are needed to show the occurrence of such oscillatory transitions that are developed in the following paragraphs. 

\paragraph{Main result --- assumptions.}
Note that for $\sigma \geq 0$, i.e., with or without degradation, steady-states of \eqref{eq:sys} have the form 
\begin{subequations}
\begin{eqnarray} \label{eq:steadystates}
    u_*(x) = u_*^- \frac{\sinh(\sigma(L-x))}{\sinh(\sigma L)} + u_*^+ \frac{\sinh(\sigma x)}{\sinh(\sigma L)}
\end{eqnarray}
with some $u_*^- \in \bbR$ and $u_*^+ \in \bbR$, so that 
\begin{equation} \label{eq:steadystate_dn}
    \partial_n|_{x=0} u_* = \sigma u_*^- \coth(\sigma L) - \sigma u_*^+ \csch(\sigma L) \quad \text{and} \quad \partial_n|_{x=L} u_* = -\sigma u_*^- \csch(\sigma L) + \sigma u_*^+ \coth(\sigma L).
\end{equation}
\end{subequations}

\begin{remark}[Case of no bulk degradation, $\sigma = 0$] \label{rmk:sigma=0}
    You may have stumbled upon the case $\sigma = 0$ while reading \eqref{eq:steadystates} and \eqref{eq:steadystate_dn}. Here and in the following, as long as well-defined and attainable, those expressions for $\sigma = 0$ are defined in the limit sense for $\sigma \downarrow 0$. Of course then it holds that 
    \begin{equation*}
        u_*(x) = u_*^- \cdot \left(1 - \frac{x}{L}\right) + u_*^+ \cdot \frac{x}{L},
    \end{equation*}
    so that $\partial_n|_{x=0} u_*(x) = \frac{u_*^- - u_*^+}{L}$ and $\partial_n|_{x=L} u_*(x) = \frac{u_*^+ - u_*^-}{L}$ at $\sigma = 0$. It will be stated clearly when complications arise as $\sigma \downarrow 0$. 
\end{remark}
This leads to the first assumption.

\begin{assumption}[Existence of symmetric equilibrium at $\mu = \mu_*^i$ and $\mu = \mu_*^a$] \label{assu:equi}
    I assume that
    \begin{equation*}
        f(u_*^s, \sigma_* u_*^s \tanh(\sigma_*\frac{L}{2}), \mu) = 0
    \end{equation*}
    at some $\mu = \mu_*^i$ and at some $\mu = \mu_*^a$ whose purpose will become clear in the next steps, so that 
    \begin{equation*}
        (u^-, u, u^+) = \left(u_*^s, \quad u_*^s \frac{\sinh(\sigma(L-x))}{\sinh(\sigma L)} + u_*^s \frac{\sinh(\sigma x)}{\sinh(\sigma L)}, \quad u_*^s\right)
    \end{equation*}
    for some fixed $\sigma_* \geq 0$ and some $u_*^s\in \bbR$ is an equilibrium solution to \eqref{eq:sys}, which is a symmetric one.
\end{assumption}

\begin{remark}[Choice of base equilibrium]
    Assumption \ref{assu:equi} constrains the symmetry of the base equilibrium $u_*$ assumed to exist for this study. However, the base equilibrium could either be a symmetric equilibrium $u_*$ with $u_*(0) = u_*(L)$ or an antisymmetric equilibrium with $u_*(0) = - u_*(L)$, or it can even be a mixed equilibrium state consisting of a linear combination of both, so that simply $u_*(0) \neq u_*(L)$, created by an antisymmetric bifurcation branch emerging from a symmetric equilibrium branch or vice versa. From any such base equilibrium, one can search for solutions that again have two more potential symmetries branching off that base state at some $\mu$ similar to how it is done in this study with the symmetric base state. Later on, in Section \ref{sec:examples}, I will illustrate the theory with example kinetics that have a trivial solution branch $u_* \equiv 0$ that is both symmetric and antisymmetric. I will then choose this branch as the base branch instead of another nontrivial symmetric or asymmetric branch for most of the illustration.
\end{remark}

To find solutions branching off the equilibrium branch $u_*$ for $\mu \in \bbR$ and to derive stability properties of $u_*$ and the new solution branches, perturb $u_*$ in the way
\begin{equation*}
    u^- = u_*^- + V^-, \quad u = u_* + V, \quad u^+ = u_*^+ + V^+
\end{equation*}
with perturbations of the linearized system and their corresponding eigenvalue (complex growth rate) $\lambda \in \bbC$,
\begin{equation} \label{eq:linearpert}
    V^-(t) = e^{\lambda t} V_0^-, \quad V(t, x) = e^{\lambda t}V_0(x), \quad V^+(t) = e^{\lambda t} V_0^+
\end{equation}
where $V_0^-, V_0, V_0^+ \in \bbR$ are of small extent, i.e., $|V_0^-|\ll 1, |V_0|\ll 1, |V_0^+| \ll 1$. Plugged into \eqref{eq:sys} and linearized about $(V_0^-, V_0, V_0^+) = (0, 0, 0)$, the small perturbations therefore need to satisfy
\begin{subequations} \label{eq:pertsys}
\begin{eqnarray}
    \lambda V_0 = V_0'' - \sigma^2 V_0, \quad &x\in(0, L), \label{eq:pertdiff} \\
    V_0(0) = V_0^-,  V_0(L) = V_0^+,&\label{eq:pertbc}
\end{eqnarray}
\begin{equation}
\begin{array}{l}    
    \lambda \begin{pmatrix} V_0^- \\ V_0^+ \end{pmatrix} \\
    =
    \begin{pmatrix}
        \partial_1 f(u_*^-, \sigma_* (u_*^- \coth(\sigma_* L) - u_*^+\csch(\sigma_*L)), \mu) &0 \\
        0 &\partial_1 f(u_*^+, \sigma_* (-u_*^- \csch(\sigma_* L) + u_*^+\coth(\sigma_*L)), \mu)        
    \end{pmatrix}
    \begin{pmatrix}
        V_0^- \\ V_0^+
    \end{pmatrix} \\
    + \begin{pmatrix}
        \partial_2 f(u_*^-, \sigma_* (u_*^- \coth(\sigma_* L) - u_*^+\csch(\sigma_*L)), \mu) &0 \\
        0 &\partial_2 f(u_*^+, \sigma_* (-u_*^- \csch(\sigma_* L) + u_*^+\coth(\sigma_*L)), \mu)        
    \end{pmatrix}
    \begin{pmatrix}
        \partial_n|_0 V_0 \\ \partial_n|_L V_0
    \end{pmatrix}. \label{eq:pertdynb}
\end{array}
\end{equation} 
\end{subequations}
at either $\mu = \mu_*^i$ or $\mu = \mu_*^a$. The partial derivatives $\partial_j$ are in this study understood as acting on the $j^\text{th}$ argument of the function it operates on. The two-dimensional function space of solutions of \eqref{eq:pertsys} is spanned by functions of the form
\begin{equation*}
    V_0(x) = V_0^- \frac{\sinh(\sqrt{\lambda+\sigma_*^2}\; (L-x))}{\sinh(\sqrt{\lambda+\sigma_*^2}\; L)} + V_0^+ \frac{\sinh(\sqrt{\lambda+\sigma_*^2}\; x)}{\sinh(\sqrt{\lambda+\sigma_*^2}\; L)}
\end{equation*}
with $V_0^-, V_0^+\in\bbR$ and for $\lambda \in \bbC\setminus(-\infty, -\sigma_*^2]$ with the principal branch taken for the square root, so that $\Re(\sqrt{\lambda + \sigma_*^2}) \geq 0$. Substituting this explicit form into the boundary system \eqref{eq:pertdynb} gives due to 
\begin{eqnarray*} 
    \partial_n|_{x=0} V_0 &=& \sqrt{\lambda+\sigma_*^2} \; V_0^- \coth(\sqrt{\lambda+\sigma_*^2} \; L) - \sqrt{\lambda+\sigma_*^2} \; V_0^+ \csch(\sqrt{\lambda+\sigma_*^2} \; L), \\ 
    \partial_n|_{x=L} V_0 &=& -\sqrt{\lambda+\sigma_*^2} \; V_0^- \csch(\sqrt{\lambda+\sigma_*^2} \; L) + \sqrt{\lambda+\sigma_*^2} \; V_0^+ \coth(\sqrt{\lambda+\sigma_*^2} \; L)
\end{eqnarray*}
nontrivial solutions precisely when 
\begin{equation} \label{eq:d*}
    \dm_*(\lambda) := \det
    \begin{pmatrix}
        \lambda - \partial_1 f_*^- - \partial_2 f_*^- \sqrt{\lambda+\sigma_*^2} \coth(\sqrt{\lambda+\sigma_*^2} \; L) &\partial_2f_*^- \sqrt{\lambda+\sigma_*^2}\; \csch(\sqrt{\lambda+\sigma_*^2} \; L) \\
        \partial_2f_*^+ \sqrt{\lambda+\sigma_*^2}\; \csch(\sqrt{\lambda+\sigma_*^2} \; L) &\lambda - \partial_1 f_*^+ - \partial_2 f_*^+ \sqrt{\lambda+\sigma_*^2} \coth(\sqrt{\lambda+\sigma_*^2} \; L)
    \end{pmatrix} = 0
\end{equation}
where at $\mu = \mu_*^i$ or $\mu = \mu_*^a$ the starred symbols $\partial_1 f_*^-$ and $\partial_1 f_*^+$ respectively stand for $\partial_1 f$ evaluated at the base equilibrium $u_*$ with $u_*^- = u_*(0)$ and $u_*^+ = u_*(L)$, etc.

Since the reaction kinetic function $f$ is the same on the $x=0$ boundary as on the $x=L$ boundary and since the base equilibrium is assumed to be symmetric with $u_*^- = u_*^+$ by Assumption \ref{assu:equi}, the function $\dm_*$ can be factored into $\dm_* = \dm_*^i \cdot \dm_*^a$ and thus is zero precisely when either the eigenvalue
\begin{subequations}
\begin{equation} \label{eq:d*i}
    \dm_*^i(\lambda) := \lambda - \partial_1 f_* - \partial_2 f_* \sqrt{\lambda+\sigma_*^2} \tanh(\sqrt{\lambda+\sigma_*^2} \; \frac{L}{2}) \quad \text{with eigenvector} \quad (1, 1)^T
\end{equation}
of the matrix in \eqref{eq:d*} is zero at $\mu = \mu_*^i$, or the other eigenvalue
\begin{equation} \label{eq:d*a}
    \dm_*^a(\lambda) := \lambda - \partial_1 f_* - \partial_2 f_* \sqrt{\lambda+\sigma_*^2} \coth(\sqrt{\lambda+\sigma_*^2} \; \frac{L}{2}) \quad \text{with eigenvector} \quad (1, -1)^T 
\end{equation}
\end{subequations}
of the matrix in \eqref{eq:d*} is zero at $\mu = \mu_*^a$, which clarifies the meanings of $\mu_*^i$ and $\mu_*^a$. Hence, when $\dm_*^i(\lambda) = 0$ for some $\lambda$ at $\mu = \mu_*^i$, the solution space linearized about $u_*$, denoted by $\mathfrak{L}(u_*)$, is spanned by eigenperturbation $V_i$ with $V_i^- = V_i^+$ that is \emph{in-phase}, if oscillatory ($\lambda \in \bbC \setminus \bbR$), or \emph{symmetric}, if steady ($\lambda \in \bbR$), with exponential onset growth rate $\Re(\lambda)$ and potential oscillation frequency $\Im(\lambda)$. In this case, both $u_*^- + V_i^-$ and $u_*^+ + V_i^+$ are equal. Furthermore, whenever $\dm_*^a(\lambda) = 0$ at $\mu = \mu_*^a$, $\mathfrak{L}(u_*)$ is spanned by eigenperturbation $V_a$ with $V_a^- = -V_a^+$ that are \emph{anti-phase}, if oscillatory ($\lambda \in \bbC \setminus \bbR$), or \emph{antisymmetric}, if steady ($\lambda \in \bbR$), with exponential onset growth rate $\Re(\lambda)$ and potential oscillation frequency $\Im(\lambda)$. In that case, $u_*^- + V_a^-$ and $u_*^+ + V_a^+$ are asymmetric as $u_*+V_a$ is a mixture (linear combination) of a symmetric and an antisymmetric function in $x\in[0,L]$. At distinct points $\mu$, it can also happen that there exists a $\lambda$ such that both $\dm_*^i(\lambda) = 0$ and $\dm_*^a(\lambda) = 0$ at such a single $\mu$. At those $\mu$ and $\lambda$, the corresponding perturbation solution space $\mathfrak{L}(u_*)$ is spanned by both in-phase and anti-phase (or symmetric and antisymmetric) solutions with the same growth rate $\Re(\lambda)$ and frequency $\Im(\lambda)$.

The next assumption of the co-existence of two marginally stable periodic solutions, one in-phase and the other one anti-phase, can be stated. In case only one of them exists, the following theory proceeds similarly but without the interesting potential interaction of the in-phase and anti-phase solutions (see Figure \ref{fig:timesol_beta1_part2} in Section \ref{sec:numericalasympt}). 

\begin{assumption}[Two simple imaginary eigenvalues] \label{assu:simpledegen}
    I assume that 
    \begin{subequations} \label{eq:assu2}
    \begin{eqnarray} 
        \text{(1i)}\; \exists \omega_*^i > 0: \dm_*^i(i\omega_*^i) = 0, && \text{(1a)}\; \exists \omega_*^a > 0: \dm_*^a(i\omega_*^a) = 0, \\
        \text{(2i)} \; \forall \omega \in \bbR \text{ with } |\omega| \neq \omega_*^i: \dm_*^i(i\omega) \neq 0, && \text{(2a)} \; \forall \omega \in \bbR \text{ with } |\omega| \neq \omega_*^a: \dm_*^a(i\omega) \neq 0, \label{eq:assu2b}\\
        \text{(3i)} \; \partial_\lambda|_{\lambda=i\omega_*^i} \dm_*^i(\lambda) \neq 0. && \text{(3a)} \; \partial_\lambda|_{\lambda=i\omega_*^a} \dm_*^a(\lambda) \neq 0.
    \end{eqnarray}
    \end{subequations}
    Assumptions (1i|a) imply the existence of the respective in-phase (i) or anti-phase (a) oscillation with neutral growth and frequency $\omega_*^{i|a}$, (2i|a) imply the absence of solutions with different frequency but also neutral growth rate $\Re(\lambda) = 0$, and (3i|a) imply that the eigenmode with growth rate $\lambda = i\omega_*^{i|a}$ is algebraically simple. The symbol ``$i|a$'' should be read as ``respectively `$i$' or `$a$' ''. The extension of this assumption to $\omega_*^i\downarrow 0$ and $\omega_*^a\downarrow 0$ to include the existence of real eigenvalues that correspond to growth rates to bifurcating steady-states is straightforward.
\end{assumption}

Due to $\dm_*^i(0) \neq 0$ that follows from (2i) in \eqref{eq:assu2b}, we can use the Implicit Function Theorem to track the steady-state $u_*$ as a function of $(\mu, \sigma)$ close to $(\mu_*^i, \sigma_*)$, with the smooth branch of symmetric solutions $(u_*^{-}(\mu, \sigma), u_*^{+}(\mu, \sigma)) = u_*^s(\mu ,\sigma)\cdot (1, 1)$ that is determined by
\begin{eqnarray*}
    f(u_*^{-}(\mu, \sigma), \sigma u_*^{-}(\mu, \sigma) \coth(\sigma L) - \sigma u_*^+\csch(\sigma L), \mu_*^i) = 0, \\
    f(u_*^{+}(\mu, \sigma), -\sigma u_*^{-}(\mu, \sigma) \csch(\sigma L) + \sigma u_*^+\coth(\sigma L), \mu_*^i) = 0.
\end{eqnarray*}
Along this branch, we can linearize and derive a characteristic equation analogous to the above procedure, denoted by $\dm^i$ with 
\begin{subequations} \label{eq:d}
\begin{equation} \label{eq:di}
\begin{split}
    &\dm^i(\lambda; \mu, \sigma) \\
    &:= (1, 0)
    \begin{pmatrix}
        \lambda - \partial_1 f_* - \partial_2 f_* \sqrt{\lambda+\sigma_*^2} \coth(\sqrt{\lambda+\sigma_*^2} \; L) &\partial_2f_* \sqrt{\lambda+\sigma_*^2}\; \csch(\sqrt{\lambda+\sigma_*^2} \; L) \\
        \partial_2f_* \sqrt{\lambda+\sigma_*^2}\; \csch(\sqrt{\lambda+\sigma_*^2} \; L) &\lambda - \partial_1 f_* - \partial_2 f_* \sqrt{\lambda+\sigma_*^2} \coth(\sqrt{\lambda+\sigma_*^2} \; L)
    \end{pmatrix} 
    \begin{pmatrix}
        1 \\ 1
    \end{pmatrix} \\
    &= \lambda - \partial_1 f_* - \partial_2 f_* \sqrt{\lambda+\sigma_*^2} \tanh(\sqrt{\lambda+\sigma_*^2} \; \frac{L}{2}).
\end{split}
\end{equation}
where $\partial_1f_*$ now stands for $f$ evaluated at $u_*^s(\mu, \sigma)$ etc. By Assumption \ref{assu:simpledegen} (1i) and (3i), it holds that $\dm(i\omega_*^i; \mu_*^i, \sigma_*) = 0$ and $\partial_\lambda \dm(i\omega_*^i, \mu_*^i, \sigma_*) \neq 0$, so that by the Implicit Function Theorem  we obtain a unique family of roots $\lambda_*^i(\mu, \sigma)$ for $(\mu, \sigma) \sim (\mu_*^i, \sigma_*)$ with 
\begin{equation*}
    \dm^i(\lambda_*^i(\mu, \sigma); \mu, \sigma) = 0 \quad \text{with} \quad  \lambda_*^i(\mu_*^i, \sigma_*) = i\omega_*^i.
\end{equation*}
The continuation of the anti-phase eigenvalue $\lambda_*^a(\mu, \sigma)$ can be obtained similarly via
\begin{equation} \label{eq:da}
\begin{split}
    &\dm^a(\lambda; \mu, \sigma) \\
    &:= (1, 0)
    \begin{pmatrix}
        \lambda - \partial_1 f_* - \partial_2 f_* \sqrt{\lambda+\sigma_*^2} \coth(\sqrt{\lambda+\sigma_*^2} \; L) &\partial_2f_* \sqrt{\lambda+\sigma_*^2}\; \csch(\sqrt{\lambda+\sigma_*^2} \; L) \\
        \partial_2f_* \sqrt{\lambda+\sigma_*^2}\; \csch(\sqrt{\lambda+\sigma_*^2} \; L) &\lambda - \partial_1 f_* - \partial_2 f_* \sqrt{\lambda+\sigma_*^2} \coth(\sqrt{\lambda+\sigma_*^2} \; L)
    \end{pmatrix} 
    \begin{pmatrix}
        1 \\ -1
    \end{pmatrix} \\
    &= \lambda - \partial_1 f_* - \partial_2 f_* \sqrt{\lambda+\sigma_*^2} \coth(\sqrt{\lambda+\sigma_*^2} \; \frac{L}{2}).
\end{split}
\end{equation}
\end{subequations}

With both continued eigenvalues $\lambda^{i|a}_*$ in place, the next assumption on their behavior as the parameter $\mu$ is varied can be stated.

\begin{assumption}[Strict crossing] \label{assu:bif}
    I assume that 
    \begin{equation*}
        \Re(\partial_\mu \lambda_*^i(\mu_*^i, \sigma_*)) \neq 0 \quad \text{and} \quad \Re(\partial_\mu \lambda_*^a(\mu_*^a, \sigma_*)) \neq 0.
    \end{equation*}
    In words, the exponential growth rates $\Re(\lambda^{i|a})$ of the respective perturbations is increasing or decreasing with nonzero speed as the respective critical parameter value $\mu = \mu_*^i$ and $\mu = \mu_*^a$ is traversed at fixed $\sigma = \sigma_*$.
\end{assumption}

From these assumptions follows the existence of two time-periodic solutions bifurcating from the base branch of temporally constant solutions $u_*$ at generically different points $\mu_*^i$ and $\mu_*^a$. The critical crossings can be computed from $\Re(\lambda_*^{i|a} (\mu, \sigma)) = 0$ for $\mu = \mu_*^{i|a}$ with frequencies $\omega_*^{i|a}(\mu, \sigma) := \Im(\lambda_*^{i|a}(\mu_*^{i|a}(\mu, \sigma), \sigma)$, where this expression is to be read once for upper index ``$i$'' and once for upper index ``$a$''. The extension of this assumption to real eigenvalues $\lambda_*^i$ and $\lambda_*^a$ is straightforward, so that bifurcations from the symmetric base steady-state $u_*$ to further symmetric or asymmetric steady-states can be addressed as well.

\paragraph{Main result --- statement.}

The foundational assumptions for the existence of nonlinear time-periodic solutions bifurcating off the temporally constant but spatially variable base solution $u_*$ are now stated. 

\begin{theorem}[In-phase and anti-phase Hopf oscillations] \label{theorem:hopf}
    Given Assumptions \ref{assu:equi}, \ref{assu:simpledegen}, and \ref{assu:bif} are satisfied and given that the base equilibrium $u_*$ of \eqref{eq:steadystates} is symmetric with $u_*^- = u_*^+ = u_*^s\in\bbR$, the following holds close to $u_*$.
    \begin{enumerate}
        \item[(i)] For sufficiently small amplitudes $|r^i| < R^i$ and sufficiently small $|\sigma-\sigma_*|$, there exist smooth functions $\mu^i_{\text{nl}}$ and $\omega^i_{\text{nl}}$ of $r^i$, $\mu$, and $\sigma$, even in $r^i$, with $\mu^i_{\text{nl}}(r^i=0, \mu, \sigma) = \mu_*^i(\mu, \sigma)$ and $\omega^i_{\text{nl}}(r^i=0, \mu, \sigma) = \omega_*^i(\mu, \sigma)$. With time-parametrization $s^i = \omega^i_{\text{nl}}t$, there exists a time-periodic function 
        \begin{equation*}
            \tilde{u}^i(s^i, x; r^i, \sigma) = \tilde{u}^i(s^i + 2\pi, x; r^i, \sigma)
        \end{equation*}
        such that  
        \begin{equation*}
            u(t, x; r^i,\sigma) = \tilde{u}^i(\omega^i_{\text{nl}} t, x; r^i, \sigma)
        \end{equation*}
        has boundary values 
        \begin{equation*}
            u^-(t; r^i, \sigma) := \tilde{u}^i(\omega^i_{\text{nl}} t, 0; r^i, \sigma) \quad \text{and} \quad u^+(t; r^i, \sigma) := \tilde{u}^i(\omega^i_{\text{nl}} t, L; r^i, \sigma)
        \end{equation*}
        and is a solution to \eqref{eq:sys} with $\mu = \mu_{\text{nl}}(r^i, \mu, \sigma)$. The amplitude of oscillations at the boundary is given by $r^i$, and
        \begin{equation*}
            u(t, 0; r^i, \sigma) = u_*^- + r^i\cos(\omega^i_{\text{nl}} t) + \cO((r^i)^2), \quad u(t, L; r^i, \sigma) = u_*^+ + r^i\cos(\omega^i_{\text{nl}} t) + \cO((r^i)^2).
        \end{equation*}
        The branch of these in-phase oscillatory solutions is unique up to shifts in $s^i$.
        \item[(a)] Furthermore, for sufficiently small amplitudes $|r^a| < R^a$ and sufficiently small $|\sigma-\sigma_*|$, there exist smooth functions $\mu^a_{\text{nl}}$ and $\omega^a_{\text{nl}}$ of $r^a$, $\mu$, and $\sigma$, even in $r^a$, with $\mu^a_{\text{nl}}(r^a=0, \mu, \sigma) = \mu_*^a(\mu, \sigma)$ and $\omega^a_{\text{nl}}(r^a=0, \mu, \sigma) = \omega_*^a(\mu, \sigma)$. With time-parametrization $s^a = \omega^a_{\text{nl}}t$, there exists a time-periodic function 
        \begin{equation*}
            \tilde{u}^a(s^a, x; r^a, \sigma) = \tilde{u}^i(s^a + 2\pi, x; r^a, \sigma)
        \end{equation*}
        such that  
        \begin{equation*}
            u(t, x; r^a,\sigma) = \tilde{u}^a(\omega^a_{\text{nl}} t, x; r^a, \sigma)
        \end{equation*}
        has boundary values 
        \begin{equation*}
            u^-(t; r^a, \sigma) := \tilde{u}^a(\omega^a_{\text{nl}} t, 0; r^a, \sigma) \quad \text{and} \quad u^+(t; r^a, \sigma) := \tilde{u}^a(\omega^a_{\text{nl}} t, L; r^a, \sigma)
        \end{equation*}
        and is a solution to \eqref{eq:sys} with $\mu = \mu_{\text{nl}}(r^a, \mu, \sigma)$. The amplitude of oscillations at the boundary is given by $r^a$, and
        \begin{equation*}
            u(t, 0; r^a, \sigma) = u_*^- + r^a\cos(\omega^a_{\text{nl}} t) + \cO((r^a)^2), \quad u(t, L; r^a, \sigma) = u_*^+ +r^a\cos(\omega^a_{\text{nl}} t + \pi) + \cO((r^a)^2).
        \end{equation*}
        The branch of these anti-phase oscillatory solutions is unique up to shifts in $s^a$.
        \item[(i\&a)] The two boundaries $u^-$ and $u^+$ are coupled to one another with coupling strength varying as $\csch(\sigma L)$ for varying $L$ and $\sigma$, so that the boundaries strongly couple and become one as $\csch(\sigma L)\uparrow \infty$ when $\sigma L \downarrow 0$ and decouple as $\csch(\sigma L) \downarrow 0$ when $\sigma L \uparrow \infty$. Especially, when $L$ and $\sigma$ vary at the same time (non-fixed $\sigma= \sigma_*$) while $\frac{L}{\sigma} = const$, the coupling strength and its effect on the dynamics do not change. For fixed $\sigma=\sigma_* \geq 0$, however, the boundaries completely decouple when $L\to\infty$. Generically, it holds that the in-phase Hopf bifurcation point $\mu_*^i$ and anti-phase Hopf bifurcation point $\mu_*^a$ are different.
    \end{enumerate}
\end{theorem}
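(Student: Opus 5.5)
The plan is a Lyapunov--Schmidt reduction in a space of $2\pi$-periodic functions, carried out within the symmetric or antisymmetric subspace so that each Hopf bifurcation reduces to a simple-eigenvalue problem. First, shift the base state by writing $u=u_*(\mu,\sigma)+v$, $u^\pm=u_*^s(\mu,\sigma)+v^\pm$, where $u_*^s(\mu,\sigma)$ is the smooth symmetric branch obtained from the Implicit Function Theorem. Then rescale time by $s=\omega t$. The unknowns become $(v^-,v^+)(s)$ in $H^1_{\text{per}}(0,2\pi)$, or in $C^{1,\alpha}$. The bulk field $v(s,x)$ is eliminated by solving the linear inhomogeneous heat problem $\omega\partial_s v=\partial_{xx}v-\sigma^2 v$ with Dirichlet data $v^\pm$. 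Expanding in Fourier modes, $e^{iks}$ with $k\in\bbZ$, the $k$-th mode is given by the explicit $\sinh$-formula with $\lambda=ik\omega$. Consequently, the Dirichlet-to-Neumann map $\Lambda_\omega$ acts on the $k$-th mode by the matrix whose entries are $\sqrt{ik\omega+\sigma^2}\coth(\cdot\,L)$ on the diagonal and $-\sqrt{ik\omega+\sigma^2}\csch(\cdot\,L)$ off the diagonal. This map is a smooth, order-$1/2$ pseudo-differential operator in $s$.

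The boundary system then reads
\[
F(v^-,v^+;\omega,\mu,\sigma):=\omega\,\partial_s\begin{pmatrix}v^-\\ v^+\end{pmatrix}-\begin{pmatrix}f(u_*^s+v^-,\,\partial_n u_*|_0+(\Lambda_\omega v)^-,\mu)\\ f(u_*^s+v^+,\,\partial_n u_*|_L+(\Lambda_\omega v)^+,\mu)\end{pmatrix}=0 .
\]
The map $F$ is equivariant under time shifts $s\mapsto s+\theta$ and under the reflection $s\mapsto -s$. Since $f$ is the same at both ends and $u_*$ is symmetric, it also commutes with the swap $R:(v^-,v^+)\mapsto(v^+,v^-)$. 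For (i), restrict $F$ to the fixed space $\{v^-=v^+\}$. For (a), restrict to the space of functions satisfying $v^+(s)=v^-(s+\pi)$, which is invariant under $R$ composed with a half-period shift. On the symmetric space, the linearization at $v=0$ acts on mode $k$ as multiplication by $\dm^i(ik\omega;\mu,\sigma)$. On the anti-phase space, the odd modes are governed by $\dm^a$ and the even modes by $\dm^i$.

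By Assumption~\ref{assu:simpledegen} (1)--(2), at $\omega=\omega_*^{i|a}$ the kernel consists exactly of the modes $k=\pm1$. For the anti-phase case this additionally requires $\dm^i(2ik\omega_*^a)\neq0$ for all $k$, which I would add as a generic non-resonance condition, noting that (2i) already excludes it except when $2k\omega_*^a=\omega_*^i$. Simplicity then follows from (3). For large $|k|$, the terms $ik\omega$ dominate the $\sqrt{|k|}$ contributions, so the linearization is Fredholm of index zero, with inverse bounded on the complement of the kernel.

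Perform the Lyapunov--Schmidt reduction with the ansatz $v=r\cos s+w$, where $w\perp\{\cos s,\sin s\}$. Use the shift symmetry to fix the phase, and solve the range equation for $w=\cO(r^2)$ by the Implicit Function Theorem. The remaining two-dimensional bifurcation equation is $g(r,\omega,\mu,\sigma)=0$, and by $\bbZ_2$-equivariance (the shift by $\pi$ sends $r\mapsto -r$) one can write $g=r\,h(r^2,\omega,\mu,\sigma)$. At $r=0$ we have $h=\dm^{i|a}(i\omega;\mu,\sigma)$ up to a nonzero factor. The derivative of $(\Re h,\Im h)$ with respect to $(\omega,\mu)$ is invertible exactly when $\Re\partial_\mu\lambda_*\neq0$, using $\partial_\mu\lambda_*=-\partial_\mu\dm/\partial_\lambda\dm$ together with (3) and Assumption~\ref{assu:bif}. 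A second application of the Implicit Function Theorem then yields smooth functions $\mu_{\text{nl}}(r^2,\sigma)$ and $\omega_{\text{nl}}(r^2,\sigma)$, which are even in $r$, together with local uniqueness up to shifts. The stated boundary expansions follow directly from the ansatz.

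Part (i\&a) is read off from the formulas. The only coupling between the boundaries is the off-diagonal $\csch$ entries. Their ratio to the diagonal entries tends to zero as $\sigma L\to\infty$, while $\csch\to\infty$ as $\sigma L\to 0$. Finally, $\dm^i-\dm^a=\partial_2 f_*\sqrt{\cdot}\,(\coth-\tanh)\neq0$ whenever $\partial_2 f_*\neq0$, so $\mu_*^i\neq\mu_*^a$ generically. I expect the main obstacle to be the functional-analytic step: showing that the nonlocal, $\omega$-dependent Dirichlet-to-Neumann operator $\Lambda_\omega$ is smooth in $(\omega,\sigma)$ between the chosen periodic spaces, including as $\sigma\downarrow0$ on the mode $k=0$, and that the linearization is Fredholm. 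I would handle this directly through explicit Fourier-multiplier estimates, using $|\sqrt{ik\omega+\sigma^2}|\sim|k|^{1/2}$ and $\coth\to1$ for large $|k|$.
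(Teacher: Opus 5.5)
Your proposal follows the paper's architecture. You eliminate the bulk through the Fourier/Dirichlet-to-Neumann symbols in $s=\omega t$, pass to the in-phase and anti-phase boundary equations, use that the linearization is a Fourier multiplier that is Fredholm of index zero with kernel spanned by $e^{\pm is}$, and conclude by Lyapunov--Schmidt and the Implicit Function Theorem. You also make explicit several steps the paper only defers to its single-boundary predecessor: phase fixing, oddness in $r$ via the half-period shift, and invertibility of the $(\omega,\mu)$-Jacobian if and only if $\Re(\partial_\mu\lambda_*)\neq 0$.

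The one substantive difference is the anti-phase kernel lemma, and there your version is the correct one. The paper's $\cL^a=(1,0)\,D\vF\,(1,-1)^T$ acts by $\dm^a$ on every mode because it is the linearization on $\{v^+=-v^-\}$. That subspace is not invariant under $\vF$ once $f$ has even-order terms (e.g.\ $\beta\neq 0$). The actual linearization of $F^a$, which contains the shift $v^a(\cdot+\pi/\omega^a)$, has symbol $\dm^a$ on odd modes and $\dm^i$ on even modes, exactly as you found. Your non-resonance hypothesis is therefore genuinely needed, and it is absent from Assumptions 1--3. However, (2i) concerns $\dm^i(\cdot\,;\mu_*^i,\sigma_*)$ and says nothing at $\mu=\mu_*^a$. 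So you must impose $\dm^i(2ik\omega_*^a;\mu_*^a,\sigma_*)\neq 0$ for all $k\in\bbZ$ outright, including $k=0$; this rules out the anti-phase Hopf point coinciding with a symmetric steady-state bifurcation.

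There are also two minor slips. First, $F$ is not equivariant under $s\mapsto -s$: conjugating by this reflection amounts to $\omega\mapsto-\omega$, which replaces $\sqrt{ik\omega+\sigma^2}$ by its complex conjugate (the heat flow is not reversible). You never use this claim, so nothing breaks. Second, with unknowns in $H^1$ the flux argument $\Lambda_\omega v$ lies only in $H^{1/2}\not\subset L^\infty$. For $f$ nonlinear in $\partial_n u$ you should therefore work from $H^2$ to $H^1$, as the paper does (flux in $H^{3/2}$, target an algebra), or in your H\"older setting.
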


\paragraph{Dynamic or Wentzell boundary conditions.} 
Boundary conditions of the form \eqref{eq:bc} with \eqref{eq:dynb} were derived by Wentzell (sometimes spelled Venttsel) in a closed bounded region as a most general boundary condition to an elliptic operator $\cL$ representing an infinitesimal generator of a Markov process in the region \cite{venttsel1959boundary}. They are not boundary conditions in the usual sense since the gradient term is nonlocal at the boundary. A more recent derivation is given in \cite{goldstein2006derivation} for the diffusion and wave equations. For the linear case in arbitrary dimension $N$, it reads $\frac{d}{dt} u(t, x_0) = -a(x_0)u(t, x_0) + b(x_0)  \;\partial_n u(t, x_0)$ for $x \in \partial \Omega$ of a domain $\Omega \subset \bbR^d$ and can be interpreted as an external heat source located at the boundary sending a concentration wave into an infinitesimal layer close to the boundary, towards $\Omega$ if $b(x_0) < 0$ or attracts concentration towards $\partial\Omega$ if $b(x_0) > 0$: $u (t, x) = e^{-at} F (x + bt n)$ where $n$ is the outward unit normal vector and $F$ is some function to be found. In the following, as in \cite{pelz2026oscillations}, I will use the expressions \emph{Wentzell boundary condition} and \emph{dynamic boundary condition} interchangeably. The reasoning behind the latter expression is that the condition involves a partial derivative with respect to time along with one with respect to space. The time derivative cannot easily be eliminated by replacing it, e.g. on the left boundary, by $\Delta u(t, 0) - \sigma^2 u_-$, since the interpretation of $\Delta u(t, 0)$ on the boundary point is unclear. As already seen in the analysis of the system corresponding to \eqref{eq:sys} on the half-unbounded spatial domain $[0, \infty)$  with a single boundary point at $x = 0$, also within this study it will turn out that the key to the presence of a Hopf bifurcation leading to asymptotically stable oscillations is a positive coefficient in the dependence on the flux $\partial_n u$ as, e.g., $\frac{d}{dt} u^- = -\alpha u^- + \sqrt{2\alpha} \; \partial_n|_{x=0} u$. Such terms were motivated in \cite{scheel2021signaling} through directed motion and resulting gradient sensing of stem cells. 

The dynamic boundary condition, which is crucial for the existence of Hopf bifurcations to asymptotically stable oscillations in \eqref{eq:sys}, can also be interpreted as incorporating additional spatial information close to the boundary. More explanations to the dynamic or Wentzell boundary condition are given in \cite{pelz2026oscillations}.

\paragraph{Origin of the system of interest.}
System \eqref{eq:sys} is biologically motivated by its derivation as an adiabatically reduced effective system for modeling planarian flatform regeneration dynamics after cutting and crafting experiments in \cite{scheel2021signaling}. Therein, the dynamic boundary conditions are able to stabilize a global signaling gradient whose existence is undermined by the experimental data, while simple Robin boundary conditions cannot restore a robust gradient field in this manner for flatworm polarity preservation.

\paragraph{Embedding in mathematical biology.}
While system \eqref{eq:sys} exhibits qualitatively similar behaviors to ones obtained for a class of nonlinear Schrödinger/Gross-Pitaevskii equations with a simple double-well potential (cf. \cite{kirr2008symmetry}), I am choosing to focus here especially on embedding this study into the broader research context in mathematical biology. One of the earliest studies that have shown that self-sustained spatiotemporal oscillations can easily arise in systems where localized spatial features in the domain or on the domain boundaries are coupled through linear diffusion (e.g., the boundaries themselves) is \cite{gomez2007self}, illustrated with prototypical FitzHugh-Nagumo kinetics. A few years earlier, it was shown in \cite{levine2005membrane} that such bulk-surface coupling, due to additional permeability or reactive parameters on the surface for the bulk-surface interaction, can also easily lead to temporally steady pattern formation on that surface, even with equal bulk diffusivities in some cases. Later on, various works along the lines of \cite{gomez2021pattern} for boundary surface (or membrane) pattern formation due to diffusive bulk coupling have addressed Turing and Hopf bifurcations in multi-dimensional such systems through asymptotic analysis. The analyzed system of \cite{gou2015synchronized}, in which linearized spectral results close to a simple steady base state and a weakly nonlinear analysis with regard of the sub- or supercriticality of the Hopf branches are provided, is most related to this study for Hopf bifurcations of diffusively coupled boundaries of a bounded one-dimensional domain. Novel in the highly general system \eqref{eq:sys} is that, in contrast to all previously mentioned works in this paragraph, only one diffusiving and reacting species is included, and further novel are the dynamic boundaries that have the potential to change the oscillatory behavior drastically. Furthermore, the present study provides an exhaustive analysis with respect to both steady-state and oscillatory (Hopf) bifurcations.

Diffusive coupling is omnipresent in biological cell systems. Intra-cell membrane coupling of Min proteins through the diffusive cytosol (intracellular space) is described to be a key driver of intracellular synchronized membrane pattern formation that are an ingredient for the symmetry-breaking that leads to cell-division in E. coli bacterial cells \cite{brauns2021bulk}. See a further biological study on intracellular membrane pattern formation via inter-membrane crosstalk in \cite{rassam2018intermembrane}. The mathematical systems described above were mainly modeling intracellular behavior as well, with the exception of \cite{gomez2021pattern} where also extracellular compartment/cell coupling was addressed and of \cite{scheel2021signaling} that aims to model the regeneration dynamics of the whole flatworm organism.  Another well-researched species worth mentioning with regard of intra-organism pattern formation belong to the genus of the fresh-water polyp \emph{hydra} that exhibits similar robust regeneration dynamics as the planarian flatworms (cf. \cite{marciniak20128, wang2023mechano}). On the other hand, the fundamental importance of extracellular diffusive coupling in cell systems (inter-cell signaling) has started to take the research spotlight since Bassler coined the term \emph{quorum sensing} for synchronized switch-like up- or down-regulation of cell behavior in a cell group when the cell density is exceeding some threshold \cite{waters2005quorum}. Mathematical descriptions with the goal of unraveling the mechanisms and resulting dynamics of quorum sensing can be finely tested using chemical beads (cf. \cite{taylor2009dynamical, taylor2015insights}). However, much is not known about biological cell membranes, motivating the dynamic boundary condition as explained above. Quorum sensing only constitutes the starting point of research on the breadth of behaviors of cell groups due to their global diffusive coupling. See \cite{dang2020cellular} on pattern formation without morphogen gradients through diffusive inter-cell communication and on spiral waves in a disordered cell colony. Spatial arrangement within the diffusive domain, in turn, guides cell specialization through gene expression of microbes in microbial communities \cite{gupta2020investigating}. Slow diffusive properties of signaling molecules can further be overcome by the collective activating action of a cell group in a common environment, leading to propagation of diffusive waves with a fixed wave speed and which is called \emph{diffusive signal relaying} in biological literature \cite{dieterle2020dynamics}. Having thoroughly provided biological motivation for the study of system \eqref{eq:sys}, let us move on to why this system is of further mathematical interest.

\paragraph{Effect of delay and buffering}
The gradient dependence and, through it, the dependence on diffusive signal transmission introduces a delay in the communication of the two boundaries of \eqref{eq:sys} with each other. It henceforth is the main driver of the transition to oscillations in the system. Mallet-Paret and Nussbaum have extended our understanding of delay-differential equations in a large series of works (e.g., \cite{mallet1986bifurcation, mallet1986global, nussbaum2003singperturb}), which were highlighting the nonlinear shape of the bifurcating Hopf oscillation far from its bifurcation point that can also be seen in the present study and the previous study on a single dynamic boundary coupled to diffusion of \cite{pelz2026oscillations}. However, their work and most other analytical work on delay-differential equations has solely addressed single delay-differential equations, not systems of such equations. Exceptions include the work \cite{pyragas1998synchronization} on coupled time-delay chaotic systems using both Krasovskii-Lyapunov theory representing an extension of the
second Lyapunov method for delay-differential equations and using perturbation theory for large delay time. In system \eqref{eq:sys} within the present study, the delay time is only large for large domain size $L$. Along the lines of quorum sensing and coupled oscillators, work has been done on Kuramoto oscillators with time-delay in \cite{yeung1999time} with respect to bistability between synchronized and incoherent states. Further work includes the study of two van der Pol oscillators with delayed velocity coupling \cite{yeung1999time} and a series of studies to general parabolic equations with additional time-delay \cite{pao1996dynamics, pao2004global, pao2005stability} mostly illustrated with Lotka-Volterra systems with competition dynamics. As neither of these works provides an explicitly worked out connection of the diffusive-coupling of two differential equations to its induced delay, system \eqref{eq:sys} could provide a minimal starting point to fill this vacuum in the mathematical literature. Herein, neither an equivalent delay-differential equation system to \eqref{eq:sys} nor a further quantification of the precise time-delay of the diffusive coupling along the domain $[0, L]$ is provided, but it can be observed from the illustrations in Section \ref{sec:numericalasympt} how reminiscent the oscillatory solutions to the example system \eqref{eq:sys_tocubic} are to solutions of delay-differential equations.

\paragraph{Outline.} 
Section \ref{s:2} leads to a constructive proof of Theorem \ref{theorem:hopf} for the existence of time-periodic in-phase and anti-phase oscillations branching off the symmetric base steady-state $u_*$. It involves an in-phase and anti-phase Fredholm operator as the linearizations to respective boundary-integral equations after the solution within the bulk $(0, L)$ is constructed. The kernels of the Fredholm operators can then be solved for to arrive at the oscillatory solutions of interest and the local Hopf branch geometry close to the bifurcation point from $u_*$. In Section \ref{sec:examples}, I will choose a general cubic expansion for the boundary reaction kinetics without a constant term, namely $f(u^{-|+}, \partial_n|_{x=0|L} u, \mu) = -\alpha u^{-|+} + \beta (u^{-|+})^2 + \gamma (u^{-|+})^3 + \mu \;\partial_n|_{x=0|L} u$ with $\alpha>0$, $\beta\in \bbR$, $\gamma\in\bbR$ that I then shift by an arbitrary $u_*$, in order to illustrate computing the local leading-order asymptotic expansion of the in-phase and anti-phase Hopf branches as well as the stability change of the base steady-state branch $u_*$. I will do the same for the symmetric and asymmetric steady-state branches bifurcating off the symmetric base branch $u_*$, also addressing the stability properties of those already at that point and their fold bifurcation lines. In Section \ref{sec:stab}, additionally the stability of the in-phase and anti-phase Hopf branches is addressed and the explicit critical curve for the transition from subcriticality to supercriticality in parameter space is provided. Section \ref{sec:attractorcompetition} is devoted to the competition of the dynamical attractors of the system, the stable steady-states and the stable Hopf oscillations, where primarily the relation of the steady-states to the Hopf oscillations close to onset at the base branch is regarded. Section \ref{sec:numericalasympt} serves as the numerical illustration of the theory of this study --- showing the parameter regions of stable symmetric and asymmetric steady-state solutions that bifurcate from the symmetric $u_*$ along with their growth rates (and the \emph{Hopf dance}), showing the critical curves of sub- to supercriticality to the Hopf branches, showing the nonlinear shapes of the in-phase and anti-phase Hopf branches, and showing full spatiotemporal numerical solutions of the system with the cubically expanded reaction kinetic function at the diffusively coupled boundaries. Lastly, the results of this study are discussed in Section \ref{sec:discussion} and interesting next steps are mentioned therein.

\section{Proof of Theorem~\ref{theorem:hopf}}\label{s:2}

This section establishes the statements of Theorem \ref{theorem:hopf} in a constructive fashion.

\paragraph{Reduction to boundary-integral equation.}
First rescale time with $s = \omega t$ where $\omega$ to be found is the oscillation frequency of $u$ at both boundaries, oscillating either in-phase or anti-phase, and now focus on the rescaled function $\tilde{u}(s, x) := u(\frac{s}{\omega}, x)$. The bulk equation then reads
\begin{equation*}
    \omega \partial_s \tilde{u}(s, x) = \partial_{xx} \tilde{u} - \sigma^2 \tilde{u}.
\end{equation*}
Expressing $\tilde{u}$ as the Fourier series $\tilde{u}(s, x) = \sum_{\ell\in\bbZ} \hat{u}_\ell(x) e^{i\ell s}$, the Fourier coefficients need to solve $(i\omega\ell+\sigma^2)\hat{u}_\ell = \hat{u}_\ell''$ on $(0, L)$, so that 
\begin{equation*}
    \forall \ell \in \bbZ\setminus\{0\}: \quad \hat{u}_\ell(x) = \hat{u}_\ell^- \frac{\sinh(\sqrt{i\omega\ell+\sigma^2}\;(L-x))}{\sinh(\sqrt{i\omega\ell+\sigma^2}\;L)} + \hat{u}_\ell^+ \frac{\sinh(\sqrt{i\omega\ell+\sigma^2}\;x)}{\sinh(\sqrt{i\omega\ell+\sigma^2}\;L)}
\end{equation*}
taking the principal branch for the square root and for all $\sigma\geq 0$. Substituting this form into the dynamic boundary equations \eqref{eq:dynb} gives for the time derivative and for the gradient at the left boundary
\begin{subequations} \label{eq:def_D_left}
\begin{eqnarray}
    \dot{u}^- &=& \omega\partial_s\tilde{u}(s, 0) = \sum_{\ell\in\bbZ} i\omega\ell \; \hat{u}_\ell^- \; e^{i\ell s} =: D(\omega) u^-, \\
    \partial_n u(t, 0) &=& \hspace{-0.5cm} \sum_{\ell\in\bbZ\setminus\{0\}} \hspace{-0.2cm}\left(\sqrt{i\omega\ell + \sigma^2}\;\coth(\sqrt{i\omega\ell + \sigma^2}\;L) \; \hat{u}_\ell^- - \sqrt{i\omega\ell + \sigma^2}\;\csch(\sqrt{i\omega\ell + \sigma^2}\;L) \; \hat{u}_\ell^+\right) \;e^{i\ell s} \\
    &&+ \; \sigma \coth(\sigma L) \; \hat{u}_0^- - \sigma \csch(\sigma L) \hat{u}_0^+ \\
    &=:& D(\omega, \sigma)^{1/2} \;u^- -  D_c(\omega, \sigma)^{1/2} \; u^+
\end{eqnarray}
\end{subequations}
and similarly at the right boundary
\begin{subequations} \label{eq:def_D_right}
\begin{eqnarray}
    \dot{u}^+ \hspace{-0.2cm} &=& \omega\partial_s\tilde{u}(s, L) = \sum_{\ell\in\bbZ} i\omega\ell \; \hat{u}_\ell^+ \; e^{i\ell s} =: D(\omega) u^+, \\
    \partial_n u(t, L) \hspace{-0.2cm} &=& \hspace{-0.5cm} \sum_{\ell\in\bbZ\setminus\{0\}} \hspace{-0.3cm} \left(-\sqrt{i\omega\ell + \sigma^2}\;\csch(\sqrt{i\omega\ell + \sigma^2}\;L) \; \hat{u}_\ell^- + \sqrt{i\omega\ell + \sigma^2}\;\coth(\sqrt{i\omega\ell + \sigma^2}\;L) \; \hat{u}_\ell^+\right) \;e^{i\ell s} \\
    \hspace{-0.2cm} &&- \sigma \csch(\sigma L) \; \hat{u}_0^- + \sigma \coth(\sigma L) \hat{u}_0^+ \\
    \hspace{-0.2cm} &=:& -D_c(\omega, \sigma)^{1/2} \;u^- + D(\omega, \sigma)^{1/2} \; u^+
\end{eqnarray}
\end{subequations}
Note that the nonlocal differential operator $D(\omega, \sigma)^{1/2}$ is the square root of $D(\omega)$ when $\sigma = 0$ and is a Dirichlet-to-Neumann operator for the system as $L \uparrow \infty$ (cf. the previous study \cite{pelz2026oscillations}). The differential operator $D_c(\omega, \sigma)^{1/2}$ represents the effect of the diffusive interaction of both boundaries with each other and converges to the zero operator as $L\uparrow \infty$ for $\sigma>0$ as well as for $\sigma = 0$ due to $\lim_{L\to\infty} \lim_{\sigma\to 0} \sigma\csch(\sigma L) = 0 = \lim_{\sigma\to 0} \lim_{L\to\infty} \sigma\csch(\sigma L)$.

With the constructed bulk solution and the rewritten differential operators, system \eqref{eq:sys} can now be restricted to the following time-periodic boundary-integral equation shifted to the equilibrium branch with boundary points $(u^-, u^+) = (u_*^-(\mu, \sigma) + v^-, u_*^+(\mu, \sigma) + v^+)$,
\begin{equation} \label{eq:bndryintegraleq}
\begin{array}{l}
    \begin{pmatrix}
        0 \\ 0
    \end{pmatrix}
    =
    \vF(v^-, v^+, \mu, \omega, \sigma) \\
    := 
    \begin{pmatrix}
        D(\omega) v^- - f(u_*^-(\mu, \sigma) + v^-, \sigma\tanh(\sigma \frac{L}{2}) u_*^- - \sigma\csch(\sigma L) (u_*^+ - u_*^-) + D(\omega, \sigma)^{1/2}\;v^- - D_c(\omega, \sigma)^{1/2}\; v^+, \mu) \\ 
        D(\omega) v^+ - f(u_*^+(\mu, \sigma) + v^+, \sigma\tanh(\sigma \frac{L}{2}) u_*^+ - \sigma\csch(\sigma L) (u_*^- - u_*^+) - D_c(\omega, \sigma)^{1/2}\;v^- + D(\omega, \sigma)^{1/2}\; v^+, \mu)
    \end{pmatrix}
\end{array}
\end{equation}
where $v^- = v(t, 0)$ and $v^- = v(t, L)$ of the perturbed solution $u_*(\mu, \sigma) + v$ with fully nonlinear perturbation $v$ to \eqref{eq:sys}, as opposed to the linearized perturbation $V$ that only solves the linearized system \eqref{eq:pertsys} at $u_*$. In order to proceed, I use Assumption \ref{assu:equi} that $u_*$ is symmetric with $u_*^- = u_*^+ = u_*^s$ and that the reaction kinetic function $f$ is the same at both boundaries. Then \eqref{eq:bndryintegraleq} can be split into a single boundary-integral equation for an in-phase solution $(v^-(t), v^+(t)) = (v^i(t), v^i(t))$, $v^i: \bbR \to \bbR$,
\begin{subequations}
\begin{equation} \label{eq:bndryintegraleq_i}
        F^i(v^i, \mu, \omega^i, \sigma) := D(\omega^i) v^i - f(u_*^s(\mu, \sigma) + v^i, \sigma \tanh(\sigma \frac{L}{2}) \; u_*^s + (D(\omega^i, \sigma)^{1/2} - D_c(\omega^i, \sigma)^{1/2})\; v^i, \mu) = 0,
\end{equation}
and a single boundary-integral equation for an anti-phase solution $(v^-(t), v^+(t)) = (v^a(t), v^a(t+\pi/\omega^a))$, $v^a: \bbR \to \bbR$,  
\begin{equation} \label{eq:bndryintegraleq_a}
    F^a(v^a, \mu, \omega^a, \sigma) := D(\omega^a) v^a - f(u_*^s(\mu, \sigma) + v^a, \sigma\tanh(\sigma \frac{L}{2}) u_*^s + D(\omega, \sigma)^{1/2}\; v^a - D_c(\omega, \sigma)^{1/2}\; v^a(\cdot + \pi/\omega^a), \mu)
\end{equation}
\end{subequations}
since $D(\omega^a) v^a(\cdot\, + \pi/\omega^a) - f(u_*^s(\mu, \sigma) + v^a(\cdot \,+ \pi/\omega^a), \sigma\tanh(\sigma \frac{L}{2}) u_*^s + D(\omega, \sigma)^{1/2}\; v^a(\cdot\, + \pi/\omega^a) - D_c(\omega, \sigma)^{1/2})\; v^a, \mu)$ is the same equation in disguise, with a shifted $v^a$ upon noticing from the Fourier series representation of $v^a$ that $v^a(\cdot + 2\pi/\omega^a) = v^a(\cdot)$.

\paragraph{Properties of the linearization.}

The Jacobian matrix of $\vF$ with respect to $(v^-, v^+)$ about the point $(v^-, v^+) = (0, 0)$ is 
\begin{equation*}
    D\vF = 
    \begin{pmatrix}
        D(\omega) - \partial_1f_* - \partial_2f_*D(\omega, \sigma)^{1/2} &\partial_2f_*D_c(\omega, \sigma)^{1/2} \\
        \partial_2f_*D_c(\omega, \sigma)^{1/2} &D(\omega) - \partial_1f_* - \partial_2f_*D(\omega, \sigma)^{1/2}
    \end{pmatrix}
\end{equation*}
where $\partial_1f_*$ denotes $\partial_1f$ evaluated at $u_*^s$. Matrix $D\vF$ is not invertible at the in-phase critical point $\mu_*^i$ and at the anti-phase critical point $\mu_*^a$ at $\sigma = \sigma_*$ due to Assumption \ref{assu:simpledegen} and thus indicate bifurcation points. Note that the nonlinear $v$ linearized about zero is $V$ from \ref{eq:linearpert}. The linear operator $D\vF$ naturally splits at the respective critical points into its eigenvalues
\begin{subequations} \label{eq:Fredholm_operators}
\begin{equation} \label{eq:Fredholm_operatori}
    \cL^i := (1, 0)\; D\vF(0, \mu_*^i, \omega_*^i, \sigma_*) 
    \begin{pmatrix}
        1 \\ 1
    \end{pmatrix}
    = D(\omega) - \partial_1f_* - \partial_2f_*(D(\omega, \sigma)^{1/2} - D_c(\omega, \sigma)^{1/2})
    : H^2(S^1, \bbR) \to H^1(S^1, \bbR)
\end{equation}
and 
\begin{equation} \label{eq:Fredholm_operatora}
    \cL^a := (1, 0)\; D\vF(0, \mu_*^a, \omega_*^a, \sigma_*) 
    \begin{pmatrix}
        1 \\ -1
    \end{pmatrix}
    = D(\omega) - \partial_1f_* - \partial_2f_*(D(\omega, \sigma)^{1/2} + D_c(\omega, \sigma)^{1/2})
    : H^2(S^1, \bbR) \to H^1(S^1, \bbR).
\end{equation}
\end{subequations}

To proceed with computing the in-phase and anti-phase bifurcation branches near onset respectively at $\mu_*^i$ and $\mu_*^a$ as the kernels of $\cL^i$ and $\cL^a$, it is crucial to check the Fredholm properties of $\cL^i$ and $\cL^a$.

\begin{lemma}[Fredholm properties of $\cL^i$ and $\cL^a$]
    The linear differential operators $\cL^i$ and $\cL^a$ have Fredholm index 0 and their kernels are spanned by $\{e^{is}, e^{-is}\}$, so that $\dim_\bbR(\ker(\cL^i)) = 2 = \dim_\bbR(\ker(\cL^a))$. Furthermore, $\range(\cL^i)^\perp = \ker(\cL^i)$ where $\perp$ refers to the $L^2$ scalar product, and $\range(\cL^a)^\perp = \ker(\cL^a)$.
\end{lemma}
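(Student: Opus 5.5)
The plan is to diagonalize $\cL^i$ and $\cL^a$ in the Fourier basis $\{e^{i\ell s}\}_{\ell\in\bbZ}$ of $L^2(S^1)$. All operators involved, $D(\omega)$, $D(\omega,\sigma)^{1/2}$ and $D_c(\omega,\sigma)^{1/2}$, are Fourier multipliers by their definitions in \eqref{eq:def_D_left}–\eqref{eq:def_D_right}. A direct computation gives, for $\ell\neq 0$ and with $k_\ell=\sqrt{i\omega_*\ell+\sigma_*^2}$,
\begin{equation*}
\cL^i e^{i\ell s} = \dm_*^i(i\omega_*^i\ell)\, e^{i\ell s}, \qquad \cL^a e^{i\ell s} = \dm_*^a(i\omega_*^a\ell)\, e^{i\ell s}.
\end{equation*}
This uses the identities $\coth z-\csch z=\tanh(z/2)$ and $\coth z+\csch z=\coth(z/2)$. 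For $\ell=0$ the symbol is $\dm_*^{i|a}(0)$, understood in the limit sense of Remark \ref{rmk:sigma=0} when $\sigma_*=0$.

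The kernel then follows from Assumption \ref{assu:simpledegen}. By (1i), the symbol vanishes at $\ell=\pm1$; at $\ell=-1$ this uses the reality of $f$, so that $\dm_*^i(\bar\lambda)=\overline{\dm_*^i(\lambda)}$. By (2i), the symbol is nonzero for every $\ell\in\bbZ$ with $|\ell|\neq1$, including $\ell=0$ as already noted in the text. Hence the complex kernel is spanned by $e^{\pm is}$, and the real kernel is spanned by $\cos s$ and $\sin s$, giving real dimension $2$. The anti-phase case is identical using (1a) and (2a).

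For the Fredholm property, I will use the large-$|\ell|$ asymptotics. Since $\Re k_\ell\sim\sqrt{\omega|\ell|/2}\to\infty$, we have $\tanh(k_\ell L/2),\coth(k_\ell L/2)\to1$ exponentially fast. The symbol is therefore $i\omega\ell - \partial_2 f_*\, k_\ell + \cO(1)$, with $|k_\ell|=\cO(|\ell|^{1/2})$, so $|\text{symbol}|\geq c|\ell|$ for $|\ell|$ large.

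This is the step needing the most care. One must check that the multipliers indeed map $H^2\to H^1$ boundedly, which holds since the symbol is $\cO(|\ell|)$. The lower bound then shows that $\cL$ restricted to the Fourier modes orthogonal to $e^{\pm is}$ is an isomorphism onto the corresponding closed subspace of $H^1$. Together with finitely many nonzero low modes, this gives a closed range equal to $\{g\in H^1: \hat g_{\pm1}=0\}$. The range has codimension $2$, equal to the kernel dimension, so the index is $0$.

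Finally, the range is exactly the set of $g\in H^1$ orthogonal in $L^2$ to $e^{\pm is}$, that is, to $\cos s$ and $\sin s$. Therefore $\range(\cL^{i|a})^\perp=\ker(\cL^{i|a})$. Note that this holds even though $\cL$ is not self-adjoint, because the operator is diagonal in an orthonormal basis: the adjoint symbol is the complex conjugate, which has the same zero set. The main obstacle is the uniform lower bound on the symbol and the handling of the $\ell=0$ and $\sigma_*=0$ limit cases; the rest is bookkeeping.
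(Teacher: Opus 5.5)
Your proposal is correct. The kernel part is the same as in the paper: you diagonalize in Fourier modes with symbol $\dm_*^{i|a}(i\omega_*^{i|a}\ell)$ via $\coth z\mp\csch z$, and the zeros occur exactly at $\ell=\pm1$ by Assumption~\ref{assu:simpledegen}. You are also more careful than the paper about $\ell=-1$ (conjugation symmetry of the principal square root) and $\ell=0$ (covered by (2i|a) with $\omega=0$).

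The genuine difference is in how you obtain the Fredholm property and the range characterization. The paper argues abstractly. It notes that $D(\omega_*^{i|a})-1$ is an isomorphism $H^2\to H^1$, and that all remaining terms gain half a derivative: they are bounded $H^2\to H^{3/2}$, followed by the compact embedding into $H^1$. Hence $\cL^{i|a}$ is invertible plus compact and has index $0$. The paper then gets $\range(\cL^{i|a})^\perp=\ker(\cL^{i|a})$ separately, by observing that the $L^2$-adjoint has the conjugate symbol ($\ell\mapsto-\ell$) and therefore the same kernel.

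You instead use the lower bound $|\dm_*^{i|a}(i\omega_*^{i|a}\ell)|\geq c|\ell|$ for large $|\ell|$ to invert mode by mode. This identifies the range explicitly as $\{g\in H^1:\hat g_{\pm1}=0\}$, and closedness, codimension $2$, index $0$ and the orthogonality statement all follow at once.

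Your route is more elementary and delivers exactly what the later Lyapunov--Schmidt splitting needs: the projection onto the range is just deletion of the $\pm1$ Fourier modes. The paper's route uses the multiplier structure only for the principal part, so it would survive lower-order terms that are not Fourier-diagonal. Both rest on the same analytic input, namely that the symbols of $D(\omega,\sigma)^{1/2}\mp D_c(\omega,\sigma)^{1/2}$ grow only like $|\ell|^{1/2}$.
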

\begin{proof}
    The principal part $D(\omega_*^i)$ of $\cL^i$ minus the identity, $D(\omega_*^i) - 1$, is bounded invertible, as can be seen directly using Fourier series for $D(\omega_*^i) - 1$. The remaining terms of $\cL^i$ are compact since they can be considered as bounded operators from $H^2$ into $H^{3/2}$ due to Remark \ref{rmk:sigma=0} regarding the case $\sigma = 0$. The space $H^{3/2}$, in turn, is compactly embedded in $H^1$, so that $\cL^i$ is Fredholm of index 0. Similar statements hold for $\cL^a$. \\
    Any elements of $\ker(\cL^{i|a})$ can be written as Fourier series. The actions of $\cL^{i|a}$ on Fourier coefficients are diagonal,
    \begin{eqnarray*}
        \cL^i e^{i\ell s} = \left(i\omega_*^i\ell - \partial_1f_* - \partial_2f_*\sqrt{i\omega_*^i\ell + \sigma_*^2} \tanh(\sqrt{i\omega_*^i\ell + \sigma_*^2}\frac{L}{2})\right) \; e^{i\ell s} = \dm^i(i\omega_*^i\ell; 0, \sigma_*) \; e^{i\ell s}, \\
        \cL^a e^{i\ell s} = \left(i\omega_*^a\ell - \partial_1f_* - \partial_2f_*\sqrt{i\omega_*^a\ell + \sigma_*^2} \coth(\sqrt{i\omega_*^a\ell + \sigma_*^2}\frac{L}{2})\right) \; e^{i\ell s} = \dm^a(i\omega_*^a\ell; 0, \sigma_*) \; e^{i\ell s},
    \end{eqnarray*}
    so that $\cL^{i|a} \;e^{i\ell s} = 0$ each when $|\ell|=1$ by Assumption \ref{assu:simpledegen}. Hence, the kernels of $\cL^i$ and $\cL^a$ are each spanned by $\{e^{i\ell s}, e^{-i\ell s}\}$, which, restricting to real functions gives the result. Now note that the complex conjugate of $\sqrt{i\omega_*^i\ell + \sigma_*^2} \tanh(\sqrt{i\omega_*^i\ell + \sigma_*^2}\frac{L}{2})$ is $\sqrt{i\omega_*^i(-\ell) + \sigma_*^2} \tanh(\sqrt{i\omega_*^i(-\ell) + \sigma_*^2}\frac{L}{2})$ and similar for $\sqrt{i\omega_*^i\ell + \sigma_*^2} \coth(\sqrt{i\omega_*^i\ell + \sigma_*^2}\frac{L}{2})$. Thus, the $L^2$-adjoints of $\cL^{i|a}$ respectively have the same form, replacing $\ell$ by $-\ell$, so that the kernels of the adjoints and hence the orthocomplements of the ranges are identical to the kernels of $\cL^{i|a}$, respectively.
\end{proof}

\paragraph{Proof of Theorem~\ref{theorem:hopf}.}

The next step is to split the solution space for $v$ into the space of the respective critical in-phase or anti-phase oscillation spanned by the basis $\{e^{i\ell s}, e^{-i\ell s}\}$ and the complement of this space via Lyapunov-Schmidt splitting, in order to focus on the critical oscillatory solution. This can be done as in the one-boundary case ($L\to \infty$, $\sigma_*\geq 0$) of \cite{pelz2026oscillations}, once with $\cL^i$ and once with $\cL^a$, and the results in (i) and (a) follow with respective reduced equations from which the expansions of $(\mu_{\text{nl}}^i, \omega_{\text{nl}}^i)$ and $(\mu_{\text{nl}}^a, \omega_{\text{nl}}^a)$ in the oscillation amplitudes $r^{i|a}$ can be obtained.

For (i\&a), the interaction of the boundaries with one another is quantified by the multiplier $\csch(\sigma L)$, by the $\partial_2f_* D_c(\omega, \sigma)^{1/2}$ operator as the off-diagonal entries of the linearization $D\vF$ to the boundary-integral equation $\vF = 0$, and by their respective specializations to the in- and anti-phase form, namely $\partial_2f_* D_c(\omega_*^i, \sigma)^{1/2}$ and $\partial_2f_* D_c(\omega_*^a, \sigma)^{1/2}$. All of these converge with $\sigma_* > 0$ and also for with $\sigma_*=0$ (see Remark \ref{rmk:sigma=0}) to zero as $L\to \infty$, so that the boundaries naturally decouple. 

In the in-phase case, the coupling is more accurately quantified in the corresponding Fredholm operator $\cL^i$ through the multiplier $\tanh(\sqrt{i\omega_*^i + \sigma_*^2} \frac{L}{2}) \uparrow 1$ exponentially in $\frac{L}{2}$ and $\sigma_*$ as they approach $\infty$. In the anti-phase case, the multiplier in the corresponding Fredholm operator $\cL^a$ reads $\coth(\sqrt{i\omega_*^i + \sigma_*^2} \frac{L}{2}) \downarrow 1$ exponentially in $\frac{L}{2}$ and $\sigma_*$ as they approach $\infty$. 

Note that the limits commute, i.e., $\lim_{L\to\infty} \lim_{\sigma_*\to 0} D_c^{1/2} = \lim_{\sigma_*\to\infty} \lim_{L\to 0} D_c^{1/2}$. Interestingly, when the domain size becomes infinite while the degradation becomes negligible in a dynamic way with $(L, \sigma_*) \to (\infty, 0)$ while $\sigma L = const$ and when the boundaries come closer until they merge while bulk degradation becomes infinite with $(L, \sigma) \to (0, \infty)$ together with $\sigma L = const$, the boundaries do not decouple and act like the same two coupled boundaries. Especially the former case is of interest in, to a certain scale, long-range systems that have negligible degradation of the diffusing signal between the boundaries. For fixed $\sigma_*$, however, none of these coupled limits is relevant.

Due to difference of $\dm^i$ to $\dm^a$, the in-phase bifurcation point $\mu_*^i$ and anti-phase bifurcation point $\mu_*^a$ are generically different.

\section{Leading-order expansions for up-to-cubic boundary kinetics} \label{sec:examples}

In the following, I am working with a reaction kinetic function $f(u^{-|+}, \partial_n u, \mu)$ at the boundaries that is expanded in $u^{-|+} - u_*^s$ up to cubic order, following the usual normal form theory for scalar differential equations but without a constant term and shifted by any $u_*^s\in\bbR$ of choice. Hence, $u_*$ of \eqref{eq:steadystates} with boundary values $u_*(0) = u_*^s$ and $u_*(L) = u_*^s$ is a symmetric steady-state that serves in this section as the simple base state from Assumption \ref{assu:equi}. Furthermore, I will choose linear dependence on $\partial_n u$, shifted by $\partial_nu_*$, with $\mu$ as its multiplicator. This will both serve as an illustration of Theorem \ref{theorem:hopf} and will lead to further explicit expansions of the respective bifurcation parameters and oscillation frequencies, $(\mu^i, \omega^i)$ and $(\mu^a, \omega^a)$, along with the stability properties of the bifurcation branches for such shifted and cubically expanded $f$. The system of consideration now reads for $t>0$
\begin{subequations} \label{eq:sys_tocubic}
\begin{eqnarray}
    \partial_t u &=& \partial_{xx}u - \sigma^2 u, \quad x\in(0, L), \label{eq:diff_tocubic} \\
    u(t,0) = u^-(t), && u(t,L) = u^+(t), \label{eq:bc_tocubic} \\
    \frac{d}{dt} u^- = f_\text{int}(u^- - u_*^s) + \mu \; \partial_n|_{x=0} (u - u_*), && \frac{d}{dt} u^+ = f_\text{int}(u^+ - u_*^s) + \mu \; \partial_n|_{x=L} (u - u_*).\label{eq:dynb_tocubic}
\end{eqnarray}
\end{subequations}
with strictly internal reaction kinetic function $f_\text{int}(\cdot) = -\alpha (\cdot) + \beta (\cdot)^2 + \gamma (\cdot)^3$ for $\alpha > 0$ and $\beta, \gamma\in\bbR$, so that $f(u^-, \partial_n u(t, 0), \mu) = f_\text{int}(u^- - u_*^s) + \mu \; \partial_n|_{x=0} (u - u_*)$ and similarly at the right boundary $x=L$. Due to the shift $u-u_*$ in the gradient, it holds for the equilibrium $u_*$ that $\partial_\mu u_* = 0$ at both boundary points, and recall from \eqref{eq:steadystate_dn} that $\partial_nu_*(0) = \sigma u_*^s \tanh(\sigma \frac{L}{2})$ and $\partial_nu_*(L) = -\sigma u_*^s \tanh(\sigma \frac{L}{2})$. The restriction $\alpha > 0$ is taken so that $f_\text{int}(u^{-|+} - u_*^s)$ is dynamically attracting to its root $u^{-|+} \equiv u_*^s$ close to $u^{-|+} \equiv u_*^s$. In this way,  interesting stability exchanges with further solution branches of \eqref{eq:sys_tocubic} bifurcating from the symmetric equilibrium $u_*$ could be found.

System \eqref{eq:sys_tocubic} does not generically have a Hopf bifurcation at $\mu = 0$ to either in-phase or anti-phase oscillations, but instead such bifurcations occur under the assumptions to Theorem \ref{theorem:hopf} at some $\mu = \mu_*^i \neq 0$ and $\mu = \mu_*^a \neq 0$, respectively, that can be found easily. Once found, changing the boundary equation term $\mu\;\partial_n (u -u_*)$ to $(\mu + \mu_*^{i|a})\;\partial_n (u- u_*)$ shifts the respective bifurcation point to zero.

\begin{theorem}[Hopf bifurcation points, onset frequencies, and base branch stability change] \label{thm:leading-orderHopf}
    The system \eqref{eq:sys_tocubic} satisfies Assumptions 1 -- 3 with a unique in-phase bifurcation point and onset frequency, $(\mu_*^i, \omega_*^i)$, always when $\sigma_*=0$ and whenever $\sigma_*>0$ only if $\frac{1}{2\sigma_*}\tanh(\sigma_*\frac{L}{2}) + \frac{L}{4}\sech(\sigma_*\frac{L}{2})^2 > \frac{\sigma_*}{\alpha}\tanh(\sigma_*\frac{L}{2})$ and with a unique anti-phase pendant, $(\mu_*^a, \omega_*^a)$, when $\sigma_*=0$ if $\alpha>\frac{12}{L^2}$ and whenever $\sigma_*>0$ if $\frac{1}{2\sigma_*}\coth(\sigma_*\frac{L}{2}) - \frac{L}{4}\csch(\sigma_*\frac{L}{2})^2 > \frac{\sigma_*}{\alpha}\coth(\sigma_*\frac{L}{2})$, on the symmetric base equilibrium branch $u_*$ of \eqref{eq:steadystates} with boundary points $u_*^- = u_*^+ = u_*^s \in\bbR$. 
    
    Let 
    \begin{equation*}
        c^i(\omega) := \sqrt{i\omega + \sigma_*^2} \; \tanh(\sqrt{i\omega + \sigma_*^2} \frac{L}{2}), \quad c^a(\omega) := \sqrt{i\omega + \sigma_*^2} \; \coth(\sqrt{i\omega + \sigma_*^2} \frac{L}{2}).
    \end{equation*}
    Then $(\mu_*^i, \omega_*^i)$ and $(\mu_*^a, \omega_*^a)$ respectively are the unique solutions of
    \begin{subequations} \label{eq:Hopfpoints}
    \begin{equation*} \label{eq:Hopfpoint_i}
        \frac{\omega_*^i}{\Im(c^i(\omega_*^i))} = \frac{\alpha}{\Re(c^i(\omega_*^i))}, \quad \mu_*^i = \frac{\alpha}{\Re(c^i(\omega_*^i))}
    \end{equation*}
    and
    \begin{equation*} \label{eq:Hopfpoint_a}
        \frac{\omega_*^a}{\Im(c^a(\omega_*^a))} = \frac{\alpha}{\Re(c^a(\omega_*^a))}, \quad \mu_*^a = \frac{\alpha}{\Re(c^a(\omega_*^a))}.
    \end{equation*}
    \end{subequations}
    Especially, only the linear part of $f_\text{int}$ evaluated at the base equilibrium affects those points. 
    
    Furthermore, both $\Re(\partial|_{\mu=\mu_*^i} \lambda^i) > 0$ and $\Re(\partial|_{\mu=\mu_*^a} \lambda^a) > 0$ hold, so that the base branch $u_*$ loses stability to the in-phase or anti-phase Hopf branches for respectively $\mu > \mu_*^i$ or $\mu > \mu_*^a$.
\end{theorem}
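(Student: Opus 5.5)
The plan is to specialize the characteristic functions $\dm_*^i$ and $\dm_*^a$ of \eqref{eq:d*i}--\eqref{eq:d*a} to the kinetics of \eqref{eq:sys_tocubic} and verify Assumptions \ref{assu:equi}--\ref{assu:bif} by hand.

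\emph{Assumption \ref{assu:equi} and the characteristic functions.} Assumption \ref{assu:equi} is immediate for every $\mu$: $f_\text{int}(0)=0$, and the gradient enters only through $\partial_n(u-u_*)$, which vanishes at $u_*$. The relevant derivatives at the base state are $\partial_1 f_*=f_\text{int}'(0)=-\alpha$ and $\partial_2 f_*=\mu$, independent of $u_*^s$, $\beta$ and $\gamma$. This already explains why only the linear part of $f_\text{int}$ matters. The characteristic functions become
\begin{equation*}
\dm^{i|a}(\lambda;\mu)=\lambda+\alpha-\mu\, C^{i|a}(\lambda),
\end{equation*}
where $C^i(\lambda)=\sqrt{\lambda+\sigma_*^2}\tanh(\sqrt{\lambda+\sigma_*^2}\,\tfrac L2)$ and $C^a$ is the same with $\coth$ in place of $\tanh$, so that $c^{i|a}(\omega)=C^{i|a}(i\omega)$. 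Setting $\lambda=i\omega$ and splitting into real and imaginary parts gives $\alpha=\mu\Re c(\omega)$ and $\omega=\mu\Im c(\omega)$. Eliminating $\mu$ yields exactly the displayed system for $(\mu_*^{i|a},\omega_*^{i|a})$. Hence everything reduces to studying, for $\omega>0$, the scalar function
\begin{equation*}
g^{i|a}(\omega):=\frac{\omega\,\Re c^{i|a}(\omega)}{\Im c^{i|a}(\omega)},
\end{equation*}
whose roots of $g^{i|a}(\omega)=\alpha$ are the candidate onset frequencies.

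\emph{Existence and uniqueness of the root.} I would compute the two endpoint behaviours of $g$ and then establish monotonicity.
\begin{itemize}
\item As $\omega\downarrow0$, Taylor expansion gives $c(\omega)=C(0)+i\omega C'(0)+\cO(\omega^2)$, so $g(0^+)=C(0)/C'(0)$.
\item In the in-phase case, $C^i(0)=\sigma_*\tanh(\sigma_*\tfrac L2)$ and $C^{i\prime}(0)=\frac{1}{2\sigma_*}\tanh(\sigma_*\tfrac L2)+\frac L4\sech(\sigma_*\tfrac L2)^2$. The condition $g^i(0^+)<\alpha$ is then precisely the stated inequality. At $\sigma_*=0$ we have $C^i(0)=0$ and $C^{i\prime}(0)=L/4$, so the condition always holds.
\item In the anti-phase case at $\sigma_*=0$, $C^a(\lambda)=\frac2L+\frac{L}{6}\lambda+\dots$, so $g^a(0^+)=12/L^2$. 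This produces the condition $\alpha>12/L^2$; the $\sigma_*>0$ case is analogous with $\coth$ and $-\csch^2$.
\item As $\omega\to\infty$, $\tanh$ and $\coth$ tend to $1$ exponentially fast, so $c\sim\sqrt{i\omega}$ and $g(\omega)\sim\omega\to\infty$.
\end{itemize}
Existence of a root then follows from the intermediate value theorem. Uniqueness, and also Assumption \ref{assu:simpledegen}(2), follows once $g$ is strictly increasing; this gives exactly one crossing when $g(0^+)<\alpha$ and none otherwise.

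\emph{Main obstacle: monotonicity of $g$.} Proving that $g$ is strictly increasing is the step I expect to be hardest. My first attempt would use the Mittag-Leffler (partial fraction) expansions, with $z=\lambda+\sigma_*^2$:
\begin{equation*}
C^i(\lambda)=\sum_{k\ge1}\frac{4}{L}\frac{z}{z+\nu_k},\quad \nu_k=\Big(\tfrac{(2k-1)\pi}{L}\Big)^2,\qquad C^a(\lambda)=\frac{2}{L}+\sum_{k\ge1}\frac{4}{L}\frac{z}{z+\tilde\nu_k},\quad \tilde\nu_k=\Big(\tfrac{2k\pi}{L}\Big)^2.
\end{equation*}
These are Stieltjes-type functions with positive weights. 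Writing $\omega/\Im c$ and $\Re c$ as explicit positive sums, I would try to show that $\Im c/\omega$ is decreasing in $\omega$ while $\Re c$ is increasing, which gives strict monotonicity of $g$. If the combined sums do not sign cleanly, the fallback is to show instead that $\arg c(\omega)$ is decreasing, since $g=\omega\cot\arg c$ and $\omega$ is increasing.

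\emph{Simplicity and transversality.} By implicit differentiation of $\dm(\lambda;\mu)=0$,
\begin{equation*}
\partial_\mu\lambda=\frac{C(\lambda)}{1-\mu C'(\lambda)}.
\end{equation*}
Simplicity (Assumption \ref{assu:simpledegen}(3)) requires $1-\mu_* C'(i\omega_*)\ne0$. I would establish it together with
\begin{equation*}
\Re\big(c\,\overline{(1-\mu_* C'(i\omega_*))}\big)>0,
\end{equation*}
using the same pole expansion, where each term $z/(z+\nu)$ has derivative $\nu/(z+\nu)^2$, together with the relations $\alpha=\mu_*\Re c$ and $\omega_*=\mu_*\Im c$. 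This sign condition yields Assumption \ref{assu:bif} with a positive sign, and hence the loss of stability of $u_*$ for $\mu>\mu_*^{i|a}$. Here I would also use $\mu_*>0$, which follows from $\alpha>0$ and $\Re c>0$.
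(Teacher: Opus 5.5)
Your proposal is correct and is essentially the paper's proof: your condition $g^{i|a}(\omega)=\alpha$ is the paper's $W^{i|a}(\omega)=0$, your monotonicity argument via the Mittag-Leffler expansions ($\Im c/\omega$ decreasing, $\Re c$ increasing) is the one the paper uses, and your transversality formula $\partial_\mu\lambda = c/(1-\mu_* C')$ is the paper's $\tilde c^i/(1-\theta(\tilde c^i)')$ with $\theta=\mu_*$, signed termwise in the same way. There are two harmless slips, neither affecting the argument: at $\sigma_*=0$ one has $C^{i\prime}(0)=L/2$, not $L/4$ (only $C^i(0)=0$ matters there), and your fallback of proving $\arg c$ decreasing could not work for $\sigma_*>0$ or in the anti-phase case, since there $\arg c$ starts at $0$ and is positive for $\omega>0$, but your primary route does not need it.
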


\begin{proof}
    First assume that there is degradation in the bulk region with $\sigma_* > 0$. A nontrivial in-phase oscillation with onset frequency $\omega_*^i$ bifurcates at $\mu = \mu_*^i$ from the symmetric base steady-state $u_*$ precisely when the characteristic equation 
    \begin{equation} \label{eq:dstari_uptocubic}
        \dm_*^i(i\omega_*^i) = i\omega_*^i - \partial_1f_\text{int}(u_*^s) - \mu_*^i c^i(\omega_*^i) = 0
    \end{equation}
    is satisfied where $c^i(\omega) := \sqrt{i\omega + \sigma_*^2} \; \tanh(\sqrt{i\omega + \sigma_*^2} \frac{L}{2})$ as in the statement of this theorem. The real and imaginary parts of \eqref{eq:dstari_uptocubic} are
    \begin{equation*}
        -\partial_1f_\text{int}(u_*^s) - \mu_*^i\Re(c^i(\omega_*^i)) = 0 \quad \Leftrightarrow \quad \mu_*^i = - \frac{\partial_1f_\text{int}(u_*^s)}{\Re(c^i(\omega_*^i))}
    \end{equation*}
    and 
    \begin{equation*}
        \omega_*^i - \mu_*^i \Im(c^i(\omega_*^i)) = \omega_*^i + \partial_1\fint(u_*^s)\frac{\Im(c^i(\omega_*^i))}{\Re(c^i(\omega_*^i))} = 0.
    \end{equation*}
    Because of searching for a Hopf bifurcation with $\omega_*^i > 0$, it holds that $\Re(c^i(\omega_*^i)) \neq 0 \neq \Im(c^i(\omega_*^i))$, so that I can define
    \begin{equation*}
        W^i(\omega) := 1 - \frac{\alpha}{\omega} \frac{\Im(c^i(\omega))}{\Re(c^i(\omega))}.
    \end{equation*}
    I am now establishing that $W^i(\omega) < 0$ as $\omega\downarrow 0$ if $\frac{1}{2\sigma_*}\tanh(\sigma_*\frac{L}{2}) + \frac{L}{4}\sech(\sigma_*\frac{L}{2})^2 > \frac{\sigma_*}{\alpha}\tanh(\sigma_*\frac{L}{2})$, that $W^i(\omega) \to 1$ as $\omega \uparrow \infty$, and that $W^i$ is strictly monotonically increasing. It would then follow that $W^i$ has a unique root, which then is the unique in-phase onset frequency $\omega_*^i$. To prove these claims, note that 
    \begin{equation*}
        \frac{\Im(c^i(\omega))}{\Re(c^i(\omega))} = \frac{\frac{1}{2\sigma_*}\tanh(\sigma_*\frac{L}{2}) + \frac{L}{4}\sech(\sigma_*\frac{L}{2})^2}{\sigma_*\tanh(\sigma_*\frac{L}{2})} \omega + \cO(\omega^2) \quad \text{as} \quad \omega \downarrow 0,
    \end{equation*}
    so that 
    \begin{equation*}
        \lim_{\omega\downarrow 0} W^i(\omega) < 0 \quad \Leftrightarrow \quad \frac{\frac{1}{2\sigma_*}\tanh(\sigma_*\frac{L}{2}) + \frac{L}{4}\sech(\sigma_*\frac{L}{2})^2}{\sigma_*\tanh(\sigma_*\frac{L}{2})} > \frac{1}{\alpha},
    \end{equation*}
    which is the assumption. Now 
    \begin{equation*}
        \frac{\Im(c^i(\omega))}{\Re(c^i(\omega))} \sim \frac{\Im(\frac{1+i}{\sqrt{2}}\sqrt{\omega}\tanh(\frac{1+i}{\sqrt{2}}\sqrt{\omega}\frac{L}{2}))}{\Re(\frac{1+i}{\sqrt{2}}\sqrt{\omega}\tanh(\frac{1+i}{\sqrt{2}}\sqrt{\omega}\frac{L}{2}))} \to 1 \quad \text{as} \quad \omega \uparrow \infty,
    \end{equation*}
    so that 
    \begin{equation*}
        \lim_{\omega\uparrow\infty} W^i(\omega) > 0 \quad \Leftrightarrow \quad 1 - \lim_{\omega\uparrow \infty} \frac{\alpha}{\omega} > 0 \quad \Leftrightarrow \quad 1 > 0,
    \end{equation*}
    which is a tautology. 
    
    Lastly, it needs to be shown that $W^i$ is strictly monotone. However, using the Mittag-Leffler expansion of $\tanh$ it can even be derived that $W^i$ is strictly monotonically \emph{increasing}. To start, I rewrite with this expansion and $I_n := \left(\frac{\pi}{L}\frac{2n+1}{2}\right)^2$ for $n\in \bbN$
    \begin{eqnarray*}
        c^i(\omega) &=& \sqrt{\sigma_*^2 + i\omega} \; \frac{4}{L} \sum_{n=0}^\infty \frac{\sqrt{\sigma_*^2 + i\omega}}{\sigma_*^2 + i\omega + 4I_n} = \frac{4}{L} \sum_{n=0}^\infty \frac{(\sigma_*^2 + i\omega)(\sigma_*^2 + 4I_n - i\omega)}{(\sigma_*^2 + 4I_n)^2 +\omega^2} = \frac{4}{L} \sum_{n=0}^\infty \frac{\sigma_*^4+\omega^2 + 4\sigma_*^2I_n + i4\omega I_n}{(\sigma_*^2 + 4I_n)^2 +\omega^2}.
    \end{eqnarray*}
    Hence, 
    \begin{eqnarray*}
        \frac{1}{\omega} \frac{\Im(c^i(\omega))}{\Re(c^i(\omega))} = \frac{\sum_{n=0}^\infty \frac{4I_n}{(\sigma_*^2 + 4I_n)^2 +\omega^2}}{\sum_{n=0}^\infty \frac{\sigma_*^4+\omega^2 + 4\sigma_*^2I_n}{(\sigma_*^2 + 4I_n)^2 +\omega^2}},
    \end{eqnarray*}
    of which the numerator is strictly monotonically decreasing as $\omega\in(0, \infty)$ is increasing. That the denominator is strictly monotonically increasing for increasing $\omega$ can be seen by taking the derivative
    \begin{eqnarray*}
        &&\frac{d}{d(\omega^2)} \sum_{n=0}^\infty \frac{\sigma_*^2(\sigma_*^2 +4I_n) +\omega^2}{(\sigma_*^2 + 4I_n)^2 +\omega^2} = \sum_{n=0}^\infty \frac{(\sigma_*^2 + 4I_n)^2 - \sigma_*^2(\sigma_*^2 +4I_n)}{((\sigma_*^2 + 4I_n)^2 +\omega^2)^2} = \sum_{n=0}^\infty \frac{4I_n(4I_n + \sigma_*^2)}{((\sigma_*^2 + 4I_n)^2 +\omega^2)^2} > 0.
    \end{eqnarray*}
    Therefore, $\frac{1}{\omega}\frac{\Im(c^i(\omega))}{\Re(c^i(\omega))}$ is strictly monotonically decreasing, so that $W^i$ is strictly monotonically increasing on $(0, \infty)$. Throughout this proof, no complications due to poles arise, even when $\sigma_*=0$.

    On the other hand, a nontrivial anti-phase oscillation with onset frequency $\omega_*^a$ bifurcates at $\mu = \mu_*^a$ from the symmetric base steady-state $u_*$ precisely when the characteristic equation 
    \begin{equation*}
        \dm_*^a(i\omega_*^a) = i\omega_*^a - \partial_1f_\text{int}(u_*^s) - \mu_*^a c^a(\omega_*^a) = 0
    \end{equation*}
    is satisfied where $c^a(\omega) := \sqrt{i\omega + \sigma_*^2} \; \coth(\sqrt{i\omega + \sigma_*^2} \frac{L}{2})$. The same procedure can be repeated for
    \begin{equation*}
        W^a(\omega) := 1 - \frac{\alpha}{\omega} \frac{\Im(c^a(\omega))}{\Re(c^a(\omega))}
    \end{equation*}
    yielding a unique root $\omega_*^a > 0$ of $W^a$ with corresponding anti-phase Hopf bifurcation point $\mu_*^a = - \frac{\partial_1 \fint(u_*^s)}{\Re(c^a(\omega_*^a))}$.   

    Considering now the case without degradation in the bulk region with $\sigma_* =0$, it holds that
    \begin{equation*}
        W^i(\omega) = 1 - \frac{\alpha}{\omega}\frac{\sinh(L\sqrt{\frac{\omega}{2}}) + \sin(L\sqrt{\frac{\omega}{2}})}{\sinh(L\sqrt{\frac{\omega}{2}}) - \sin(L\sqrt{\frac{\omega}{2}})} \downarrow -\infty \quad \text{as} \quad \omega \downarrow 0, \qquad W^i(\omega) \uparrow 1 \quad \text{as} \quad \omega \uparrow \infty.
    \end{equation*} 
    Hence, there always exists an in-phase bifurcation point as a root of $W^i$ since $\alpha>0$. However, due to 
    \begin{equation*}
        W^a(\omega) = 1 - \frac{\alpha}{\omega}\frac{\sin(L\sqrt{\frac{\omega}{2}}) - \sinh(L\sqrt{\frac{\omega}{2}})}{-\sin(L\sqrt{\frac{\omega}{2}}) - \sinh(L\sqrt{\frac{\omega}{2}})} \to 1 - \alpha \frac{L^2}{12} \quad \text{as} \quad \omega \downarrow 0, \qquad W^i(\omega) \uparrow 1 \quad \text{as} \quad \omega \uparrow \infty,
    \end{equation*} 
    there exists an anti-phase bifurcation point only if $\alpha > \frac{12}{L^2}$ for the case $\sigma_*=0$.

    To show strict (increasing) monotonicity of $W^a$, I am using the Mittag-Leffler expansion for $\coth$ to rewrite with $A_n = \left(\frac{\pi}{L} n\right)^2$ for $n\in\bbN$
    \begin{eqnarray*}
        c^a(\omega) &=& \sqrt{\sigma^2+i\omega}\left( \frac{2}{L\sqrt{\sigma^2+i\omega}} + \frac{4}{L} \sum_{n=1}^\infty \frac{\sqrt{\sigma^2+i\omega}}{\sigma^2+i\omega + 4A_n}\right) = \frac{2}{L} + \frac{4}{L} \sum_{n=1}^\infty \frac{\sigma^2(\sigma^2 + 4A_n) + \omega^2 + i4\omega A_n}{(\sigma^2 + 4A_n)^2 + \omega^2}.
    \end{eqnarray*}
    Hence, 
    \begin{eqnarray*}
        \frac{1}{\omega}\frac{\Im(c^a(\omega))}{\Re(c^a(\omega))} = \frac{\sum_{n=1}^\infty \frac{4\omega A_n}{(\sigma^2 + 4A_n)^2 + \omega^2}}{\frac{1}{2} + \sum_{n=1}^\infty \frac{\sigma^2(\sigma^2 + 4A_n) + \omega^2}{(\sigma^2 + 4A_n)^2 + \omega^2}},
    \end{eqnarray*}
    of which the numerator is strictly monotonically decreasing as $\omega\in(0, \infty)$ is increasing. The denominator is strictly monotonically increasing for increasing $\omega$, as can be seen by taking the derivative
    \begin{eqnarray*}
        && \frac{d}{d(\omega^2)} \sum_{n=1}^\infty \frac{\sigma^2(\sigma^2 + 4A_n) + \omega^2}{(\sigma^2 + 4A_n)^2 + \omega^2} = \sum_{n=1}^\infty \frac{4A_n(4A_n+\sigma^2)}{((\sigma^2 + 4A_n)^2 + \omega^2)^2}  > 0.
    \end{eqnarray*}
    Therefore, $\frac{1}{\omega}\frac{\Im(c^a(\omega))}{\Re(c^a(\omega))}$ is strictly monotonically decreasing, so that $W^a$ is strictly monotonically increasing for all $\omega\in(0, \infty)$. Again, no complications due to poles have arisen.

    Thus, (1i), (2i), (1a), and (2a) of Assumption \ref{assu:simpledegen} hold. Since
    \begin{eqnarray*}
        \partial_1\dm^i(i\omega_*^i) = 1 - \frac{1}{2} \mu_*^i (\sigma_*^2 + i\omega_*^i)^{-\frac{1}{2}} \; \tanh(\frac{L}{2} \sqrt{\sigma_*^2 + i\omega_*^i}) - \frac{L}{4} \mu_*^i \sech(\frac{L}{2} \sqrt{\sigma_*^2 + i\omega_*^i})^2, \\
        \partial_1\dm^a(i\omega_*^a) = 1 - \frac{1}{2} \mu_*^a (\sigma_*^2 + i\omega_*^a)^{-\frac{1}{2}} \; \coth(\frac{L}{2} \sqrt{\sigma_*^2 + i\omega_*^a}) + \frac{L}{4} \mu_*^a \csch(\frac{L}{2} \sqrt{\sigma_*^2 + i\omega_*^a})^2,
    \end{eqnarray*}
    which both are real only when $\omega_*^i = 0$, the statements (3i) and (3a) of Assumption \ref{assu:simpledegen} hold as well. 

    It now remains to establish strict crossing so as to prove that there is indeed a true bifurcation at $(\mu_*^i, \omega_*^i)$ and at $(\mu_*^a, \omega_*^a)$ and not just a point where $\Re(\lambda_*)$ vanishes but does not change sign upon variation of parameter $\mu$ through the respective points. For that, set $z^2 := \sigma^2 + \lambda$, so that the roots of $\dm^i$ and $\dm^a$ respectfully solve
    \begin{eqnarray*}
        z^2 - \sigma^2 + \alpha - \mu z \tanh(z\frac{L}{2}) = 0 \quad \text{at} \quad (\mu, \lambda) = (\mu_*^i, i\omega_*^i), \\
        z^2 - \sigma^2 + \alpha - \mu z \coth(z\frac{L}{2}) = 0 \quad \text{at} \quad (\mu, \lambda) = (\mu_*^a, i\omega_*^a).
    \end{eqnarray*}
    Implicitly differentiating both equations with respect to $\mu$ gives $\partial_\mu\lambda = 2z\partial_\mu z$ at respectfully $(\mu_*^i, i\omega_*^i)$ and $(\mu_*^a, i\omega_*^a)$
    \begin{eqnarray*}
        (2z - \mu\tanh(z\frac{L}{2}) - \frac{L}{2}\mu z \sech(z\frac{L}{2})^2) \partial_\mu z &= z\tanh(z\frac{L}{2}), \\
        (2z - \mu\coth(z\frac{L}{2}) + \frac{L}{2}\mu z \csch(z\frac{L}{2})^2) \partial_\mu z &= z\coth(z\frac{L}{2}).
    \end{eqnarray*}
    Hence, 
    \begin{eqnarray*}
        \partial_\mu\lambda^i = 2z\partial_\mu z = \frac{2z^2\tanh(z\frac{L}{2})}{2z - \mu\tanh(z\frac{L}{2}) - \frac{L}{2}\mu z \sech(z\frac{L}{2})^2} \quad \text{at} \quad (\mu, \lambda) = (\mu_*^i, i\omega_*^i), \\
        \partial_\mu\lambda^a = 2z\partial_\mu z = \frac{2z^2\coth(z\frac{L}{2})}{2z - \mu\coth(z\frac{L}{2}) + \frac{L}{2}\mu z \csch(z\frac{L}{2})^2} \quad \text{at} \quad (\mu, \lambda) = (\mu_*^a, i\omega_*^a).
    \end{eqnarray*}
    Now introduce $s:= \sigma^2 + i\omega_*^i$, so that $z = \sqrt{s}$ taking the principal branch with $\Re(z) > 0$, and introduce the function 
    \begin{equation*}
        \tilde{c}^i(s) := \sqrt{s} \tanh(\sqrt{s}\frac{L}{2}),
    \end{equation*}
    which is even in $z=\sqrt{s}$. Then the in-phase growth rate change with respect to $\mu$ at $\mu_*^i = \frac{\alpha+i\omega_*^i}{\tilde{c}^i(s)}$ can be rewritten as 
    \begin{equation*}
        \partial|_{\mu=\mu_*^i} \lambda^i = \frac{(\tilde{c}^i)^2}{\tilde{c}^i - (\alpha+i\omega_*^i) (\tilde{c}^i)'}.
    \end{equation*}
    The Mittag-Leffler expansion of $\tanh(z\frac{L}{2})$ yields
    \begin{equation*}
        \tilde{c}^i(s) = \frac{4}{L} \sum_{n=0}^\infty  \frac{s}{s + 4I_n} \quad \text{and} \quad (\tilde{c}^i(s))' = \frac{16}{L}\sum_{n=0}^\infty \frac{I_n}{(s+4I_n)^2}
    \end{equation*}
    with $I_n = \left(\frac{\pi}{L} \frac{2n+1}{2}\right)^2$ when differentiated term-wise, 
    since the original series converges uniformly on $\bbC\setminus\{-4I_n\,|\,n\in\bbN\}$. In this way, the real and imaginary parts of $\tilde{c}^i$ are given by
    \begin{eqnarray*}
        \Re(\tilde{c}^i) &=& \frac{4}{L} \sum_{n=0}^\infty \frac{\Re((\sigma^2+i\omega_*^i)(\sigma^2 + 4I_n - i\omega_*^i))}{(\sigma^2 + 4I_n)^2 + (\omega_*^i)^2} = \frac{4}{L} \sum_{n=0}^\infty \frac{|s|^2 + \sigma^2 4I_n}{(\sigma^2 + 4I_n)^2 + (\omega_*^i)^2} > 0, \\
        \Im(\tilde{c}^i) &=& \frac{16}{L} \sum_{n=0}^\infty \frac{\omega_*^i I_n}{(\sigma^2 + 4I_n)^2 + (\omega_*^i)^2} > 0.
    \end{eqnarray*}
    Similarly, the real and imaginary parts of $(\tilde{c}^i)'$ read
    \begin{eqnarray*}
        \Re((\tilde{c}^i)') &=& \Re\left(\frac{16}{L} \sum_{n=0}^\infty \frac{I_n}{(\sigma^2 + 4I_n)^2 - (\omega_*^i)^2 + i8\omega_*^i I_n}\right) = \frac{16}{L} \sum_{n=0}^\infty \frac{\Re(I_n ((\sigma^2 + 4I_n)^2 - (\omega_*^i)^2 - i8\omega_*^i I_n))}{((\sigma^2+4I_n)^2 + (\omega_*^i)^2)^2} \\
        &=& \frac{16}{L} \sum_{n=0}^\infty \frac{I_n ((\sigma^2 + 4I_n)^2 - (\omega_*^i)^2)}{((\sigma^2+4I_n)^2 + (\omega_*^i)^2)^2}, \\
        \Im((\tilde{c}^i)') &=& -\frac{128}{L} \sum_{n=0}^\infty \frac{\omega_*^i I_n^2}{((\sigma^2+4I_n)^2 + (\omega_*^i)^2)^2} < 0.
    \end{eqnarray*}
    Since $\Re(\tilde{c}^i) > 0$, the equation $W^i(\omega_*^i) = 0$ is equivalent to 
    \begin{equation*}
        \omega_*^i\Re(\tilde{c}^i) = \alpha \Im(\tilde{c}^i) \quad \Leftrightarrow \quad \Im((\alpha+i\omega_*^i) \overline{\tilde{c}^i}) = 0
    \end{equation*}
    where $\overline{w}$ denotes the complex-conjugate of a $w\in\bbC$. Hence, $\tilde{c}^i$ is a real multiple $\theta\in\bbR$ of $\alpha+i\omega_*^i$, yielding
    \begin{equation} \label{eq:realmultiple}
        \alpha + i\omega_*^i = \theta \tilde{c}^i \quad \Leftrightarrow \quad \theta = \frac{\alpha}{\Re(\tilde{c}^i)} = \frac{\omega_*^i}{\Im(\tilde{c}^i)} > 0.
    \end{equation}
    Inserting this into the change of the in-phase growth rate with respect to $\mu$ at $\mu_*^i$ gives
    \begin{equation*}
        \partial_{\mu=\mu_*^i} \lambda^i = \frac{(\tilde{c}^i)^2}{\tilde{c}^i - \theta\tilde{c}^i(\tilde{c}^i)'} = \frac{\tilde{c}^i}{1 - \theta(\tilde{c}^i)'}.
    \end{equation*}
    Finally, taking the real part,
    \begin{eqnarray*}
        \Re(\partial_{\mu=\mu_*^i} \lambda^i) &=& \frac{\Re(\tilde{c}^i(1-\theta \overline{(\tilde{c}^i)'}))}{|1-\theta(\tilde{c}^i)'|^2} \\
        &=& \frac{\Re(\tilde{c}^i) - \theta(\Re(\tilde{c}^i)\Re((\tilde{c}^i)') + \Im(\tilde{c}^i)\Im((\tilde{c}^i)'))}{|1-\theta(\tilde{c}^i)'|^2} \\
        &=& \frac{\frac{\Re(\tilde{c}^i)}{\Im(\tilde{c}^i)} (\Im(\tilde{c}^i) - \omega_*^i\Re((\tilde{c}^i)') - \omega_*^i \Im((\tilde{c}^i)')}{|1-\theta(\tilde{c}^i)'|^2}
    \end{eqnarray*}
    using $\theta = \frac{\omega_*^i}{\Im(\tilde{c}^i)}$ of \eqref{eq:realmultiple} in the last step. 

    Since $\Re(\tilde{c}^i)>0$, $\Im(\tilde{c}^i)>0$, and $\Im((\tilde{c}^i)')<0$, it only remains to show that $\Im(\tilde{c}^i) - \omega_*^i\Re((\tilde{c}^i)')$ is positive. For this, rewrite
    \begin{eqnarray*}
        \Im(\tilde{c}^i) - \omega_*^i\Re((\tilde{c}^i)') = \frac{32}{L}\sum_{n=0}^\infty \frac{(\omega_*^i)^3I_n}{((\sigma^2+4I_n)^2 + (\omega_*^i)^2)^2} > 0, 
    \end{eqnarray*}
    which proves that $\partial|_{\mu=\mu_*^i}\lambda^i > 0$. The denominator $1-\theta(\tilde{c}^i)'$ of $\partial|_{\mu=\mu_*^i}\lambda^i$ is vanishing if and only if
    \begin{equation*}
        1 = \theta \Re((\tilde{c}^i)') \quad \wedge \quad \Im((\tilde{c}^i)') = 0 \quad \Leftrightarrow \quad \omega_*^i = 0 \quad\wedge \quad 1 = \theta\left(\frac{1}{2\sigma_*}\tanh(\sigma_*\frac{L}{2}) + \frac{L}{4}\sech(\sigma_*\frac{L}{2})^2\right),
    \end{equation*}
    which remains well-defined for $\sigma_*\downarrow 0$. However, when $\omega_*^i=0$, the corresponding bifurcation point is a symmetric steady-state bifurcation point whose stability property is derived in Theorem \ref{thm:asymptexpansion_stab_steady-states}.

    The proof of $\partial|_{\mu=\mu_*^a}\lambda^a>0$ can be carried out in a similar way. See Remark \ref{rmk:stbproof} for more details.
\end{proof}

\begin{remark}[Notes on the stability proof of Theorem \ref{thm:leading-orderHopf}] \label{rmk:stbproof} 
    \begin{enumerate}[label={\arabic*}]
        \item (Strictly positive contributions). The identity $\tilde{c}^i(s) = \frac{4}{L} \sum_{n=0}^\infty \frac{s}{s + 4I_n}$ with $I_n = \left(\frac{\pi}{L}\frac{2n+1}{2}\right)^2$ is the spectral (Dirichlet eigenfunction) representation of the map $s\to \sqrt{s}\tanh(\sqrt{s}\frac{L}{2})$, which is why every quantity in $\partial|_{\mu=\mu_*^i}\lambda^i$ decomposes into a sum of manifestly signed modal contributions. Each term pushes $\Re(\partial|_{\mu=\mu_*^i}\lambda^i)>0$ in the positive direction, with no cancelation possible. This is the structural reason why the result holds uniformly in $\alpha>0$, $L>0$, and $\sigma_*\geq 0$.
        \item (Usage of assumptions). The assumption $\alpha>0$ quarantees $\theta>0$ in \eqref{eq:realmultiple}, while the assumption $\sigma_*\geq 0$ guarantees that $\Re(\tilde{c}^i) > 0$. The domain length $L>0$ fixes the positive ``eigenvalues'' $I_n$, and the defining equation for $\omega_*^i$, namely $W^i(\omega_*^i) = 0$, is used only through the collinearity relation \eqref{eq:realmultiple}, which is the key step that collapses $\Re(\partial|_{\mu=\mu_*^i}\lambda^i)$ to $\frac{\tilde{c}^i}{1 - \theta (\tilde{c}^i)'}$ with a \emph{real} coefficient $\theta$.
        \item (Stability change at anti-phase Hopf point). The proof of $\Re(\partial|_{\mu=\mu_*^a}\lambda^a)>0$ carries over verbatim because the argument for the case $\Re(\partial|_{\mu=\mu_*^i}\lambda^i)>0$ rests on two structural facts: (I) the growth rate derivative can be collapsed to a resolvant-like form, $\partial|_{\mu=\mu_*^i}\lambda^i = \frac{\tilde{c}^i}{1 - \theta(\tilde{c}^i)'}$ with the mentioned real coefficient $\theta > 0$ supplied by the phase condition $W^i(\omega_*^i) = 0$, and (II) $\tilde{c}^i$ admits a Herglotz-type expansion $c_0 + \frac{4}{L}\sum_{n=0}^\infty \frac{s}{s + 4I_n}$ with $c_0\geq 0$ and positive eigenvalues $I_n$. Replacing $\tanh$ by $\coth$ for $\partial|_{\mu=\mu_*^a}\lambda^a$ only changes the boundary condition of the underlying abstract spectral problem: the Dirichlet-type eigenvalues $I_n = \left(\frac{\pi}{L}\frac{2n+1}{2}\right)^2$ become the Neumann-type  eigenvalues $A_n:= \left(\frac{\pi}{L} n\right)^2$ for $n\in\bbN$, and a constant mode $\frac{2}{L}$ appears in the representation for $\tilde{c}^a$, just as for $c^a$ in the uniqueness part of the proof. Every modal contribution to the denominator of $\Re(\partial|_{\mu=\mu_*^a}\lambda^a)$ retains its sign, so that the conclusion $\Re(\partial|_{\mu=\mu_*^a}\lambda^a)>0$ is again uniform in $\alpha>0$, $L>0$, and $\sigma_*\geq 0$.
    \end{enumerate}
\end{remark}

Let us now turn for a moment to bifurcations to steady-state branches and the implications of Theorem \ref{thm:leading-orderHopf} for those.

\begin{remark}[Bifurcations to steady-states] \label{rmk:steady-state-bif}
    Theorem \ref{thm:leading-orderHopf} extends to the case of bifurcation onset frequency $\omega_* \downarrow 0$ (in-phase and anti-phase) since this limit is unproblematic for $\sigma_*>0$. For $\sigma_* = 0$,
    \begin{equation*}
        c^i(\omega_*) = \sqrt{i\omega_*}\tanh(\frac{L}{2}\sqrt{i\omega_*}) \sim \frac{L}{2} i\omega_* \to 0 \quad \text{and} \quad c^a(\omega_*) = \sqrt{i\omega_*}\coth(\frac{L}{2}\sqrt{i\omega_*}) \to \frac{2}{L}
    \end{equation*}
    are also well-defined limits. The introduced differential operators become for steady-states
    \begin{equation*}
        D(0) = 0, \quad D(0, \sigma)^{1/2} = \sigma\coth(\sigma L), \quad \text{and} \quad D_c(0, \sigma)^{1/2} = \sigma \csch(\sigma L)
    \end{equation*}
    for all $\sigma \geq 0$. Hence, beside Hopf bifurcations there is the further possibility for bifurcations to nontrivial symmetric and asymmetric steady-states to exist, bifurcating off the (symmetric) base state branch \eqref{eq:steadystates} with $(u_*^-, u_*^+) = u_*^s \cdot (1, 1)$. Here I call a steady-state where both boundaries exhibit the same value, $u_*^s\cdot (1, 1) + v^s\cdot (1,1), v^s\in\bbR$, \emph{symmetric}, and a steady-state where both boundaries exhibit the opposite dislocation from the symmetric base state, $u_*\cdot(1, 1) + v^{as}\cdot (1, -1), v^{as}\in\bbR$, I call \emph{asymmetric}. Note that such an asymmetric state actually is a mixed (symmetric-antisymmetric) state, as found for a class of Schrödinger and Gross-Pitaevskii equations  that often describe ground states of certain particles in quantum systems in case of a simple double-well potential in \cite{kirr2008symmetry}. More to this is shortly discussed and illustrated at the end of Section \ref{sec:numericalasympt}.

    The bifurcation point $\mu_*^s$ to a symmetric steady-state different from the symmetric base steady-state $u_*$ is precisely given by
    \begin{subequations}
    \begin{equation*} 
        \dm^i(0; \mu_*^s, \sigma_*) = -\partial_1\fint(u_*^s) - \mu_*^s\sigma_*\tanh(\sigma_*\frac{L}{2}) = 0 \quad \Leftrightarrow \quad \mu_*^s = \frac{\alpha}{\sigma_*\tanh(\sigma_*\frac{L}{2})} > 0 \quad \text{if } \sigma_*> 0
    \end{equation*}
    (does not exist for $\sigma_*=0$ since $\partial_nu_* = 0$ in this case) and the bifurcation point $\mu_*^{as}$ to an asymmetric steady-state is simply given by
    \begin{equation*} 
        \dm^a(0; \mu_*^a, \sigma_*) = -\partial_1\fint(u_*^s) - \mu_*^{as}\sigma_*\coth(\sigma_*\frac{L}{2}) = 0 \quad \Leftrightarrow \quad \mu_*^{as} =
        \begin{cases}
            \frac{\alpha}{\sigma_*\coth(\sigma_*\frac{L}{2})} > 0 &\text{for } \sigma_*>0, \\
            \alpha\frac{L}{2} > 0 &\text{for } \sigma_*=0.
        \end{cases}
    \end{equation*}
    \end{subequations}
    Interesting questions now include which of these steady and oscillatory states are winning out (are preferably attained) in the system at each point in the parameter space for $(\alpha, \beta, \gamma, \sigma, L)$. To furthermore find bifurcations from antisymmetric, $(u_*^-, u_*^+) = u_*^{as}\cdot (1, -1), u_*^{as}\in\bbR$, or asymmetric, $(u_*^-, u_*^+) = u_*^{s}\cdot (1, 1) + v^{as}\cdot (1, -1), u_*^{s}, v^{as}\in\bbR$, steady-states as base states, one needs to exchange the symmetric base state taken in this study, $(u_*^-, u_*^+) = u_*^{a}\cdot (1, 1), u_*^{s}\in\bbR$, with the one of interest and derive the respective boundary-integral equations and characteristic equations.
\end{remark}

As the nonlinearities in the boundary reaction kinetics $\fint$ will generally bend the bifurcation branches, I now turn to quantifying such corrections to the Hopf ``branches'' through correcting terms for the bifurcation parameter $\mu$ and oscillation frequency $\omega$ along the bifurcating branch close to its emergence. Later on I will show numerically how the oscillation profile changes when traversing both the in-phase and anti-phase Hopf branches far from their onset from the base equilibrium branch. For such numerical considerations, the corrections to $\mu$ and $\omega$ initially, close to $(\mu_*^i, \omega_*^i)$ and close to $(\mu_*^a, \omega_*^a)$, are needed.

By Theorem \ref{thm:leading-orderHopf}, the assumptions of Theorem \ref{theorem:hopf} hold and thus there are in-phase (i) and anti-phase (a) branches of solutions with the respective expansions in the in-phase oscillation amplitude $r^i$ and anti-phase oscillation amplitude $r^a$
\begin{subequations} \label{eq:asymptexpansions}
    \begin{eqnarray} 
        &\mu = \mu_*^i + \mu_2^i \cdot (r^i)^2 + \cO((r^i)^4), \quad \mu = \mu_*^a + \mu_2^a \cdot (r^a)^2 + \cO((r^a)^4) \quad \text{(generically, $\mu_*^i\neq \mu_*^a, \mu_2^i\neq\mu_2^a$)}, \\
        &\omega = \omega_*^i + \omega_2^i \cdot (r^i)^2 + \cO((r^i)^4), \quad \omega = \omega_*^a + \omega_2^a \cdot (r^a)^2 + \cO((r^a)^4) \quad \text{(generically, $\omega_*^i\neq \omega_*^a, \omega_2^i\neq\omega_2^a$)}.
    \end{eqnarray}
\end{subequations}
When computing the expansion in the respective oscillation amplitude, the action of the linearizations and Fredholm operators, $\cL^i$ and $\cL^a$, on Fourier modes contribute key coefficients. Therefore, I define
\begin{equation} \label{eq:d_coeffs}
    \Lambda_\ell^{i|a} := \dm^{i|a} (i\omega_*^{i|a}\ell; \mu_*^{i|a}, \sigma_*), \quad \Lambda_{\ell, \mu}^{i|a} := \partial_2\dm^{i|a} (i\omega_*^{i|a}\ell; \mu_*^{i|a}, \sigma_*), \quad \Lambda_{\ell, \omega}^{i|a} := i\partial_1\dm^{i|a} (i\omega_*^{i|a}\ell; \mu_*^{i|a}, \sigma_*)
\end{equation}
where the upper index ``$i|a$'' again means that the reader should take either ``$i$'' or ``$a$'' for the whole definition.

\begin{theorem}[Expansion of the bifurcating in-phase and anti-phase Hopf branches] \label{thm:asymptcorr}
    Fix $\sigma_*\geq 0$ and consider the system \eqref{eq:sys_tocubic} respectively with $\mu\sim\mu_*^i$ and $\mu\sim\mu_*^a$. Recall the definitions of $\dm^i$ and $\dm^a$ from \eqref{eq:di} and \eqref{eq:da}. Then the bifurcating in-phase Hopf bifurcation branch and the generically different bifurcating anti-phase Hopf bifurcation branch have the expansions \eqref{eq:asymptexpansions} with 
    \begin{subequations} \label{eq:asymptmu2om2}
        \begin{equation}
            \mu_2^{i|a} = \frac{\Im(M^{i|a} \; \overline{\Lambda_{1, \omega}^{i|a}})}{\Im(\Lambda_{1, \mu}^{i|a}\; \overline{\Lambda_{1, \omega}^{i|a}})}, \quad \omega_2^{i|a} = \frac{\Im(M^{i|a} \; \overline{\Lambda_{1, \mu}^{i|a}})}{\Im(\Lambda_{1, \omega}^{i|a}\; \overline{\Lambda_{1, \mu}^{i|a}})}
        \end{equation}
        where the definitions \eqref{eq:d_coeffs} are used together with the abbreviation
        \begin{equation}
            M^{i|a} = \frac{3}{4} \gamma + \beta^2 ((\Lambda_0^{i|a})^{-1} + (2\Lambda_2^{i|a})^{-1}).
        \end{equation}
    \end{subequations}
    Again, the reader should take either ``$i$'' or ``$a$'' for the upper index ``$i|a$'' for the whole identity.
\end{theorem}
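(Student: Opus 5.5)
The plan is to carry out the Lyapunov--Schmidt reduction sketched in the proof of Theorem \ref{theorem:hopf} explicitly for the kinetics of \eqref{eq:sys_tocubic}, working on the scalar in-phase equation \eqref{eq:bndryintegraleq_i} or the anti-phase equation \eqref{eq:bndryintegraleq_a}. I treat the index ``$i|a$'' uniformly, because each operator acts diagonally on $e^{i\ell s}$ through the multiplier $\dm^{i|a}(i\omega\ell;\mu,\sigma_*)$.

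\textbf{Reduced equation.} With $f(u^s_*+v,\cdot,\mu)=\fint(v)+\mu\,\partial_n v$, the equation reads $F(v,\mu,\omega)=\mathcal{D}(\mu,\omega)v-\beta v^2-\gamma v^3=0$, where $\mathcal{D}$ is the Fourier multiplier $e^{i\ell s}\mapsto \dm^{i|a}(i\omega\ell;\mu,\sigma_*)$.

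For the anti-phase case I first check that the shift $v^a(\cdot+\pi/\omega^a)$ only contributes the factor $(-1)^\ell$ on mode $\ell$. This is exactly what turns $\tanh$ into $\coth$ in $\dm^a$. The quadratic and cubic terms are local in $v$, so they are unaffected.

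\textbf{Expansion ansatz.} I expand
\[
v=r\cos s+r^2 v_2+r^3 v_3+\cO(r^4),\quad \mu=\mu_*+\mu_2 r^2+\cO(r^4),\quad \omega=\omega_*+\omega_2 r^2+\cO(r^4).
\]
I normalize $v_2,v_3\perp\{e^{\pm is}\}$ in $L^2$, which is allowed by the preceding Fredholm lemma. Evenness in $r$ of $\mu_{\text{nl}},\omega_{\text{nl}}$ (Theorem \ref{theorem:hopf}) removes the $r^1$ terms.

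\textbf{Order $r^2$.} At this order $\mathcal{D}_*v_2=\beta\cos^2 s=\tfrac{\beta}{2}+\tfrac{\beta}{4}(e^{2is}+e^{-2is})$. Inverting mode by mode with $\Lambda_0$ and $\Lambda_{\pm2}$ gives
\[
v_2=\tfrac{\beta}{2}\Lambda_0^{-1}+\tfrac{\beta}{4}\big(\Lambda_2^{-1}e^{2is}+\overline{\Lambda_2}^{-1}e^{-2is}\big).
\]
This uses $\Lambda_{-\ell}=\overline{\Lambda_\ell}$, together with $\Lambda_0,\Lambda_2\neq0$ from Assumption \ref{assu:simpledegen} (2).

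\textbf{Order $r^3$, projected onto $e^{is}$.} The terms are:
\begin{itemize}
\item the linear contribution $\tfrac12(\mu_2\Lambda_{1,\mu}+\omega_2\,\partial_\omega\dm(i\omega\cdot1))$, where $\partial_\omega\dm(i\omega)=i\partial_1\dm=\Lambda_{1,\omega}$ (this explains the factor $i$ in \eqref{eq:d_coeffs});
\item the cross term $2\beta\,\langle\cos s\, v_2\rangle_1=\beta\big(\tfrac{\beta}{2}\Lambda_0^{-1}+\tfrac{\beta}{4}\Lambda_2^{-1}\big)$, the $e^{is}$ coefficient of $2\beta\cos s\,v_2$;
\item the cubic term $\gamma\langle\cos^3 s\rangle_1=\tfrac{3}{8}\gamma$.
\end{itemize}
Multiplying through by $2$, the $e^{is}$ component gives the single complex equation
\[
\mu_2\Lambda_{1,\mu}+\omega_2\Lambda_{1,\omega}=M=\tfrac34\gamma+\beta^2\big(\Lambda_0^{-1}+(2\Lambda_2)^{-1}\big).
\]
The $e^{-is}$ component is its complex conjugate, so nothing new arises from it.

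\textbf{Solving for $\mu_2,\omega_2$.} The unknowns $\mu_2,\omega_2$ are real, so I solve this as a $2\times2$ real system. Multiplying by $\overline{\Lambda_{1,\omega}}$ and taking imaginary parts eliminates $\omega_2$, since $\Im(|\Lambda_{1,\omega}|^2)=0$. This yields $\mu_2$. Symmetrically, multiplying by $\overline{\Lambda_{1,\mu}}$ yields $\omega_2$. Both match \eqref{eq:asymptmu2om2}.

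Solvability needs $\Im(\Lambda_{1,\mu}\overline{\Lambda_{1,\omega}})\neq0$. This is the transversality condition: $\partial_\mu\lambda=-\Lambda_{1,\mu}/\partial_1\dm$ has nonzero real part by Theorem \ref{thm:leading-orderHopf}, i.e. Assumption \ref{assu:bif}. The real parts of $\Lambda_{1,\mu}\overline{\Lambda_{1,\omega}}$ and of $\Lambda_{1,\mu}\overline{\partial_1\dm}$ are proportional, which I will verify.

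\textbf{Main obstacle.} The routine steps are the parity in $r$ and the justification of the expansion via the implicit function theorem, which is inherited from Theorem \ref{theorem:hopf}. The real work is the bookkeeping: getting the factors $\tfrac12$ and $\tfrac14$ of the Fourier projections right, and the sign and factor-$i$ conventions in $\Lambda_{1,\omega}$ and in the complex conjugation of the modes. I will double-check these against the one-boundary computation in \cite{pelz2026oscillations}, which is the limit $L\to\infty$.
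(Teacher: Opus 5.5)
Your route is the one the paper intends; its proof only defers to the amplitude expansion of Theorem~3.2 in \cite{pelz2026oscillations}. For the in-phase case your bookkeeping is correct. The $e^{is}$-coefficients $\tfrac12(\mu_2\Lambda_{1,\mu}+\omega_2\Lambda_{1,\omega})$, $\tfrac{\beta^2}{2}\Lambda_0^{-1}+\tfrac{\beta^2}{4}\Lambda_2^{-1}$ and $\tfrac38\gamma$ reproduce $M^i$, $\mu_2^i$ and $\omega_2^i$ exactly. There is one small slip in the transversality check: the identity is $\Im(\Lambda_{1,\mu}\overline{\Lambda_{1,\omega}})=-\Re(\Lambda_{1,\mu}\overline{\partial_1\dm})=|\partial_1\dm|^2\,\Re(\partial_\mu\lambda)$, which pairs an imaginary part with a real part, not two real parts.

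\textbf{The gap is in the anti-phase reduction.} The factor $(-1)^\ell$ from the half-period shift turns $\coth(zL)-\csch(zL)=\tanh(z\tfrac L2)$ into $\coth(zL)+\csch(zL)=\coth(z\tfrac L2)$ only for odd $\ell$. For even $\ell$ the factor is $1$. So the linear symbol of \eqref{eq:bndryintegraleq_a} on $e^{i\ell s}$ is $\dm^a(i\omega\ell;\mu,\sigma_*)$ for odd $\ell$ but $\dm^i(i\omega\ell;\mu,\sigma_*)$ for even $\ell$.

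You can see this directly. The order-$r^2$ forcing is $\beta\cos^2 s$ at $x=0$ and $\beta(-\cos s)^2=\beta\cos^2 s$ at $x=L$. It is symmetric, so it must be inverted with the in-phase symbol. Your $v_2$ should therefore be $\tfrac{\beta}{2}\dm^i(0;\mu_*^a,\sigma_*)^{-1}+\tfrac{\beta}{4}\big(\dm^i(2i\omega_*^a;\mu_*^a,\sigma_*)^{-1}e^{2is}+\mathrm{c.c.}\big)$. The $e^{is}$-projection at order $r^3$ then gives $\mu_2\Lambda^a_{1,\mu}+\omega_2\Lambda^a_{1,\omega}=\tfrac34\gamma+\beta^2\big(\dm^i(0;\mu_*^a,\sigma_*)^{-1}+(2\dm^i(2i\omega_*^a;\mu_*^a,\sigma_*))^{-1}\big)$. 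When $\beta\neq0$ this in general differs from the $M^a$ built from $\Lambda_0^a,\Lambda_2^a$ of \eqref{eq:d_coeffs}. For example, at $\sigma_*=0$ one has $\dm^i(0)=\alpha$, whereas $\Lambda_0^a=\alpha-2\mu_*^a/L$.

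Invertibility on the even modes is also not supplied by Assumption~\ref{assu:simpledegen}\,(2a), which concerns only $\dm^a$. You need the additional non-resonance condition $\dm^i(i\omega_*^a\ell;\mu_*^a,\sigma_*)\neq0$ for even $\ell$.

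The paper's Fredholm lemma makes the same identification. Its $\cL^a$ is the linearization in the direction $(1,-1)^T$, not the linearization of $F^a$, and that ansatz is incompatible with the nonlinear equations when $\beta\neq0$. The deferred proof therefore inherits the problem. The anti-phase formula as literally stated is correct only for $\beta=0$, or after reading $\Lambda_0^a,\Lambda_2^a$ as the even-mode symbols $\dm^i(\,\cdot\,;\mu_*^a,\sigma_*)$ of $F^a$. As written, your derivation of it rests on a false step.
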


\begin{proof}
    The boundary-integral equation functions $F^i$ of \eqref{eq:bndryintegraleq_i} and $F^a$ of \eqref{eq:bndryintegraleq_a} can be expanded in the respective in-phase oscillation amplitude $r^i$ and anti-phase amplitude $r^a$ of its arguments just as in the proof of Theorem 3.2 of \cite{pelz2026oscillations}. This leads to the result and additionally to the onset profiles of the oscillatory solutions in both cases. 
\end{proof}

Let us now turn to the symmetric and antisymmetric steady-state perturbations of \eqref{eq:sys_tocubic}, $v^s\cdot (1, 1)$ and $v^{as}\cdot (1, -1)$ and derive their bifurcation branches at their respective onset at $\mu_*^s$ and $\mu_*^{as}$, along with their stability properties. 

\begin{theorem}[Expansion and stability of the bifurcating symmetric and asymmetric steady-state branches] \label{thm:asymptexpansion_stab_steady-states}
    Recall the expressions for the symmetric and asymmetric steady-state bifurcation points from Remark \ref{rmk:steady-state-bif} for system \eqref{eq:sys_tocubic} with fixed $\sigma_*\geq 0$. The bifurcating symmetric steady-state branch exists for $\gamma\neq 0$ if and only if $4\gamma(\alpha-\mu\sigma_*\tanh(\frac{L}{2}\sigma_*)) > - \beta^2$ and for $\gamma = 0$ if and only if $\beta \neq 0$. The generically different bifurcating asymmetric steady-state branch exists for $\gamma\neq 0$ if and only if $4\gamma(\alpha-\mu\sigma_*\coth(\frac{L}{2}\sigma_*)) > - \beta^2$ and for $\gamma = 0$ if and only if $\beta \neq 0$. The branches have the respective expansions
    \begin{subequations}
        \begin{eqnarray}
            \mu = \mu_*^s + \mu_1^s \cdot v^s + \mu_2^s  \cdot (v^s)^2 + \cO((v^s)^4) \quad \text{with} \quad \mu_1^s = -\frac{\beta}{\sigma_*\tanh(\frac{L}{2}\sigma_*)}, \quad \mu_2^s = -\frac{\gamma}{\sigma_*\tanh(\frac{L}{2}\sigma_*)}, \\
    \mu = \mu_*^{as} + \mu_1^{as} \cdot v^{as} + \mu_2^{as}  \cdot (v^{as})^2 + \cO((v^{as})^4)  \quad \text{with} \quad \mu_1^{as} = -\frac{\beta}{\sigma_*\coth(\frac{L}{2}\sigma_*)}, \quad \mu_2^{as} = -\frac{\gamma}{\sigma_*\coth(\frac{L}{2}\sigma_*)},
        \end{eqnarray}
        assuming $v^s$ and $v^{as}$ are small for this expansion to hold. 
        
        Furthermore, close to bifurcation onset, at least one of the symmetric steady-state(s) (up to two of count; see Theorem \ref{thm:num_stable_sol}) is (case: $\gamma=0$) / are (case: $\gamma \neq 0$) stable when the base branch is strictly unstable, so if $\mu > \mu_*^i \;\wedge \; \frac{1}{2\sigma_*}\tanh(\sigma_*\frac{L}{2}) + \frac{L}{4}\sech(\sigma_*\frac{L}{2})^2 < \frac{\sigma_*}{\alpha}\tanh(\sigma_*\frac{L}{2})$ and if $\mu < \mu_*^i \;\wedge \; \frac{1}{2\sigma_*}\tanh(\sigma_*\frac{L}{2}) + \frac{L}{4}\sech(\sigma_*\frac{L}{2})^2 > \frac{\sigma_*}{\alpha}\tanh(\sigma_*\frac{L}{2})$. At least one of the asymmetric steady-state(s) is (case: $\gamma = 0$) / are (case: $\gamma\neq 0$) stable close to bifurcation onset when the base branch is strictly unstable, so if $\mu > \mu_*^a \;\wedge \; \frac{1}{2\sigma_*}\coth(\sigma_*\frac{L}{2}) - \frac{L}{4}\csch(\sigma_*\frac{L}{2})^2 < \frac{\sigma_*}{\alpha}\coth(\sigma_*\frac{L}{2})$ and if $\mu < \mu_*^a \;\wedge \; \frac{1}{2\sigma_*}\coth(\sigma_*\frac{L}{2}) - \frac{L}{4}\csch(\sigma_*\frac{L}{2})^2 > \frac{\sigma_*}{\alpha}\coth(\sigma_*\frac{L}{2})$.
        The explicit expressions of the steady-states and explicit expansions of perturbation growth rates $\lambda$ along the base and bifurcating branches can easily be computed and are provided in this proof, so that one can read off which part of the nontrivial bifurcation branches are stable.
    \end{subequations}
\end{theorem}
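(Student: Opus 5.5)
The plan is to reduce the steady-state problem to two scalar algebraic equations by taking the time-independent ($\ell=0$) Fourier mode in the boundary-integral equation \eqref{eq:bndryintegraleq}. For steady states $D(0)=0$, $D(0,\sigma)^{1/2}=\sigma\coth(\sigma L)$ and $D_c(0,\sigma)^{1/2}=\sigma\csch(\sigma L)$, as recorded in Remark \ref{rmk:steady-state-bif}. For \eqref{eq:sys_tocubic}, inserting $v\cdot(1,1)$ turns the gradient term into $\mu\sigma_*(\coth(\sigma_*L)-\csch(\sigma_*L))v=\mu\sigma_*\tanh(\sigma_*\frac L2)v$. Inserting $v\cdot(1,-1)$ gives $\mu\sigma_*\coth(\sigma_*\frac L2)v$ instead. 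Writing $k^s:=\sigma_*\tanh(\sigma_*\frac L2)$ and $k^{as}:=\sigma_*\coth(\sigma_*\frac L2)$ (with $k^{as}=\frac 2L$ at $\sigma_*=0$ by Remark \ref{rmk:sigma=0}), the nontrivial steady states solve
\begin{equation*}
0=-\alpha v+\beta v^2+\gamma v^3+\mu k\, v \quad\Longleftrightarrow\quad v=0 \ \text{ or } \ \gamma v^2+\beta v+(\mu k-\alpha)=0 .
\end{equation*}
For the asymmetric case the two boundary equations are $f_{\rm int}(v)+\mu k v=0$ and $f_{\rm int}(-v)-\mu k v=0$. These coincide only if the even part $\beta v^2$ is treated consistently. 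I will check this carefully and, if needed, interpret the asymmetric branch as the mixed state $u_*^s(1,1)+v^{as}(1,-1)$ of Remark \ref{rmk:steady-state-bif}, at leading order in the $(1,-1)$ projection.

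From the quadratic I then read off existence and the expansion. Real roots exist iff the discriminant $\beta^2-4\gamma(\mu k-\alpha)$ is positive. This is the stated condition $4\gamma(\alpha-\mu k)>-\beta^2$ when $\gamma\ne0$; when $\gamma=0$ it reduces to the linear root $v=(\alpha-\mu k)/\beta$, which requires $\beta\neq0$. Solving the quadratic for $\mu$ gives the exact relation
\begin{equation*}
\mu=\frac{\alpha}{k}-\frac{\beta}{k}v-\frac{\gamma}{k}v^2 .
\end{equation*}
This yields $\mu_*^{s|as}=\alpha/k^{s|as}$, $\mu_1^{s|as}=-\beta/k^{s|as}$ and $\mu_2^{s|as}=-\gamma/k^{s|as}$. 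The quadratic is exact, so the $\cO(v^4)$ remainder is in fact zero.

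For stability I linearize along the bifurcated branch exactly as in \eqref{eq:pertsys}. Because both boundary values are equal, the characteristic function again factors into $\dm^i$ and $\dm^a$, with $\partial_1 f_*$ replaced by $f_{\rm int}'(v)=-\alpha+2\beta v+3\gamma v^2$. The relevant real eigenvalue near $0$ is then followed by the implicit function theorem through $\lambda\approx -\dm(0)/\partial_\lambda\dm(0)$. On the branch, $\dm(0)=-f_{\rm int}'(v)-\mu k=-(\beta v+2\gamma v^2)$, obtained by using the branch relation to eliminate $\mu k$. Hence the sign of $\lambda$ is that of $(\beta v+2\gamma v^2)/\partial_\lambda\dm(0)$.

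The sign of $\partial_\lambda\dm(0)=1-\mu_*\bigl(\frac1{2\sigma_*}\tanh(\sigma_*\frac L2)+\frac L4\sech(\sigma_*\frac L2)^2\bigr)$, and its analogue with $\coth$ and $-\csch^2$, is exactly what the stated inequalities encode. Using $\mu_*=\alpha/k$, this sign fixes whether the base state is unstable for $\mu>\mu_*$ or for $\mu<\mu_*$.

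Combining the two pieces gives the exchange of stability. For $\gamma=0$, transcritical exchange makes the branch on the side where the base is unstable stable. For $\gamma\ne0$, the two roots $v_\pm$ near $0$ have $\beta v+2\gamma v^2$ with the appropriate common sign on that side, giving the pitchfork-type statement.

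The main obstacle is the non-$\ell=0$ spectrum: I must ensure that no other eigenvalue (in particular a Hopf pair or the other symmetry factor) is in the right half-plane near onset. Assumption \ref{assu:simpledegen}(2) at the base point, together with continuity in $v$, covers $\dm$ at the critical symmetry. For the other factor I need $\mu_*^s$ to lie on the stable side of $\mu_*^{as}$ and of both Hopf points. I would phrase the claim conditionally on that, as the theorem's "close to onset" wording suggests.
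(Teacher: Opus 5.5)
For the symmetric branch you follow the paper's proof essentially verbatim. You reduce to the $\ell=0$ mode, solve $\gamma v^2+\beta v+(\mu k^s-\alpha)=0$ exactly, track the critical real eigenvalue via $\lambda\approx-\dm^i(0)/C^i_*$ with $C^i_*=1-\mu_*^s(c^i)'(0)$, and compare with $\lambda_b\approx(k^s/C^i_*)(\mu-\mu_*^s)$. That part is sound, with one correction. For $\beta\neq0\neq\gamma$ only one root of the quadratic lies near $0$. The other sits near $-\beta/\gamma$, where $\dm^i(0)=-\beta^2/\gamma$ is of order one, so your ``two roots $v_\pm$ near $0$'' picture holds only for $\beta=0$. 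Your worry about the rest of the spectrum (the other symmetry factor, Hopf pairs) is legitimate. The paper's proof also tracks only the single critical eigenvalue, so stating stability conditionally is the honest version.

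The genuine gap is the asymmetric branch, and it cannot be closed in the direction the statement asks. You correctly noticed that with $(v^-,v^+)=v(1,-1)$ the two boundary equations are $-\alpha v+\beta v^2+\gamma v^3+\mu k^{as}v=0$ and $\alpha v+\beta v^2-\gamma v^3-\mu k^{as}v=0$. Their sum is $2\beta v^2=0$, so for $\beta\neq0$ no nontrivial state of this form exists, and the quadratic you solve is only the left-boundary equation.

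Your fallback does not repair this. The state $u_*^s(1,1)+v^{as}(1,-1)$ is the same ansatz, and the difference of the two equations contains $\beta$ only through $2\beta wv$, so with $w=0$ it gives $\mu k^{as}=\alpha-\gamma v^2$ with no $\beta v$ term. The consistent ansatz is $(v^-,v^+)=w(1,1)+v(1,-1)$. Its even part and its odd part (the latter divided by $v\neq0$) read
\[
(\mu k^s-\alpha)w+\beta(w^2+v^2)+\gamma(w^3+3wv^2)=0,\qquad (\mu k^{as}-\alpha)+2\beta w+\gamma(3w^2+v^2)=0 .
\]
Since $\alpha-\mu_*^{as}k^s=\alpha\sech(\sigma_*\frac{L}{2})^2\neq0$, the first equation gives $w=\beta v^2/(\alpha-\mu k^s)+\cO(v^4)$, and the second then yields
\[
\mu=\mu_*^{as}-\frac{1}{\sigma_*\coth(\sigma_*\frac{L}{2})}\Bigl(\gamma+\frac{2\beta^2}{\alpha}\cosh(\sigma_*\tfrac{L}{2})^2\Bigr)v^2+\cO(v^4).
\]
So $\mu_1^{as}=0$, as forced by the reflection $x\mapsto L-x$: it acts as $v\mapsto-v$ and makes this a $\bbZ_2$ pitchfork. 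Also $\mu_2^{as}$ carries a $\beta^2$ correction. The stated $\mu_1^{as}$, $\mu_2^{as}$ and existence condition are correct only for $\beta=0$.

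Your factorization step (``both boundary values are equal'') also fails on this branch. The characteristic determinant is $\tilde{\dm}^i\tilde{\dm}^a-(2\beta v)^2+\cO(v^4)$, with $\tilde{\dm}^{i|a}$ built from the averaged $\partial_1 f$. The critical eigenvalue then comes out as $\lambda\approx-2\lambda_b$ for both signs of $v$. Exchange of stability survives, but the $\beta v^{as}$-dependent sign split does not.

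The paper's proof contains the same flaw. It evaluates $F^a$ at $\omega=0$, where the half-period shift is meaningless, and therefore enforces only the left boundary equation.
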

\begin{proof}
    Here it is crucial to expand the bifurcation parameter more generally as
\begin{eqnarray*}
    \mu &=& \mu_*^s + \mu_1^s \cdot v^s + \mu_2^s  \cdot (v^s)^2 + \mu_3^s  \cdot (v^s)^3 + \cO((v^s)^4), \\
    \mu &=& \mu_*^{as} + \mu_1^{as} \cdot v^{as} + \mu_2^{as}  \cdot (v^{as})^2 + \mu_3^{as}  \cdot (v^{as})^3 + \cO((v^{as})^4)
\end{eqnarray*}
since no restricting information on the dependence of $\mu$ on respectively $v^s$ and $v^{as}$ is available yet. Further, inserting those expansions into the respective boundary-integral equation with $\omega^{i|a} = 0$, $F^i(v^s, \mu, 0, \sigma_*) = 0$ of \eqref{eq:bndryintegraleq_i} and $F^a(v^{as}, \mu, 0, \sigma_*) = 0$ of \eqref{eq:bndryintegraleq_a}, and solving the algebraic equations in orders of respectively $v^s$ up to and including $\cO((v^s)^4)$ and $v^{as}$ up to and including $\cO((v^{as})^4)$ yields
\begin{eqnarray*}
    \mu_1^s = -\frac{\beta}{\sigma_*\tanh(\frac{L}{2}\sigma_*)}, \quad \mu_2^s = -\frac{\gamma}{\sigma_*\tanh(\frac{L}{2}\sigma_*)}, \quad\mu_3^s = 0, \\
    \mu_1^{as} = -\frac{\beta}{\sigma_*\coth(\frac{L}{2}\sigma_*)}, \quad \mu_2^{as} = -\frac{\gamma}{\sigma_*\coth(\frac{L}{2}\sigma_*)}, \quad \mu_3^{as} = 0.
\end{eqnarray*}
Since system \eqref{eq:sys_tocubic} is shifted in the symmetric base steady-state, the nontrivial symmetric solution with boundary values $(u_*^s+v^s)\cdot(1, 1)$ can now be found from the boundary-integral equation through
\begin{eqnarray*}
    &&F^i(v^s, \mu, 0, \sigma_*) = \alpha v^s - \beta (v^s)^2 - \gamma (v^s)^3 - \mu\sigma_*\tanh(\frac{L}{2}\sigma_*) v^s = 0 \\
    &\Leftrightarrow& v^s = 0 \quad \vee \quad 
    \begin{cases}
        v^s_{1|2} = -\frac{\beta}{2\gamma} \pm \frac{1}{2\gamma}\sqrt{\beta^2 + 4\alpha\gamma - 4\gamma\mu\sigma_*\tanh(\frac{L}{2}\sigma_*)}, & \gamma \neq 0 \\
        v^s_1 = \frac{1}{\beta}(\alpha-\mu\sigma_*\tanh(\frac{L}{2}\sigma_*)), & \beta \neq 0, \gamma = 0.
    \end{cases}
\end{eqnarray*}
Hence, $v^s_{1|2}$ exist for $\gamma\neq 0$ if and only if $4\gamma(\alpha-\mu\sigma_*\tanh(\frac{L}{2}\sigma_*)) > - \beta^2$ and for $\gamma = 0$ if and only if $\beta \neq 0$.

On the other hand, the nontrivial asymmetric solution with boundary values $u_*^s\cdot (1, 1) + v^{as}\cdot(1, -1)$ can be similarly found with $F^a$ to be 
\begin{equation*}
    \begin{cases}
        v^{as}_{1|2} = -\frac{\beta}{2\gamma} \pm \frac{1}{2\gamma}\sqrt{\beta^2 + 4\alpha\gamma - 4\gamma\mu\sigma_*\coth(\frac{L}{2}\sigma_*)}, & \gamma \neq 0 \\
        v^{as}_1 = \frac{1}{\beta}(\alpha-\mu\sigma_*\coth(\frac{L}{2}\sigma_*)), & \beta \neq 0, \gamma = 0.
    \end{cases}
\end{equation*}
Thus, $v^{as}_{1|2}$ exist for $\gamma\neq 0$ if and only if $4\gamma(\alpha-\mu\sigma_*\coth(\frac{L}{2}\sigma_*)) > - \beta^2$ and for $\gamma = 0$ if and only if $\beta \neq 0$.

It remains to derive the stability properties of the base branch of the emerging steady-state branches close to each bifurcation onset and to show an exchange of stability with the base branch $u_*$. In the following, I am doing so simultaneously for the symmetric and asymmetric steady-state using 
\begin{equation*}
    c^i(\lambda) := \sqrt{\lambda+\sigma_*^2} \tanh(\sqrt{\lambda+\sigma_*^2}\, \frac{L}{2}) \quad \text{and} \quad c^a(\lambda) := \sqrt{\lambda+\sigma_*^2} \coth(\sqrt{\lambda+\sigma_*^2}\, \frac{L}{2}).
\end{equation*}
Abusing notation slightly by denoting the space-dependent emerging solution branches by $v^{s|as}(x)$, perturb the newly emerging ($s$: symmetric, $as$: asymmetric) branches $u_*+v^{s|as}$ through $u = u_*+v^{s|as} + e^{\lambda t} v$ with perturbation $v$ close to steady-state bifurcation onset, so that $|v|\ll 1$ and $\partial_tu = \lambda ve^{\lambda t}$, and linearize \eqref{eq:sys_tocubic} now about the newly emerging branches $u_*+v^{s|as}$. A similar procedure as done in Section \ref{s:2} to arrive at \eqref{eq:d} yields the characteristic equations 
\begin{eqnarray*}
    \dm^{i|a}(\lambda; \mu, \sigma) &=& \lambda - \partial_1 f(u_*^s(\mu, \sigma) + v^{s|as}(\mu,\sigma), c^i(\lambda)u_*^s + c^{i|a}(\lambda)v^{s|as}, \mu) \\
    &&- \partial_2 f(u_*^s(\mu, \sigma) + v^{s|as}(\mu,\sigma), c^i(\lambda)u_*^s + c^{i|a}(\lambda)v^{s|as}, \mu) \,\sqrt{\lambda+\sigma_*^2} \\
    &=& \lambda + \alpha - 2\beta v^{s|as} - 3\gamma (v^{s|as})^2 - \mu c^{i|a}(\lambda) = 0
\end{eqnarray*}
where again the usual notation $v^{s|as}$ for the boundary points is used. 
Expanding in $v^{s|as}$,
\begin{eqnarray*}
    \dm^{i|a}(\lambda; \mu, \sigma) &=& \frac{d\lambda}{dv^{s|as}} C^{i|a}_*v^{s|as} - \beta v^{s|as} - 2\gamma(v^{s|as})^2 \quad \text{with} \quad C^{i|a}_* := 1 - \mu_*^{s|as} (c^{i|a})'(0)
\end{eqnarray*}
since $\mu^{s|as} = \frac{\alpha}{c^{i|a}(0)} - \frac{\beta}{c^{i|a}(0)}v^{s|as} - \frac{\gamma}{c^{i|a}(0)}(v^{s|as})^2 + \cO((v^{s|as})^4)$. Then 
\begin{eqnarray*}
    \frac{d\lambda}{dv^{s|as}} = (C^{i|a}_*)^{-1} (\beta + 2\gamma v^{s|as}) + \cO((v^{s|as})^2),
\end{eqnarray*}
so that 
\begin{equation*}
    \lambda = (C^{i|a}_*)^{-1} (\beta v^{s|as} + 2\gamma (v^{s|as})^2) + \cO((v^{s|as})^3)
\end{equation*}
close to bifurcation onset.

For the base branch perturbation growth rate $\lambda_b$, it holds that $\partial_\mu|_{\mu=\mu_*^{s|as}} \lambda_b = -\partial_\mu\dm^{i|a}_* / \partial_\lambda\dm^{i|a}_* = c^{i|a}(0) / C^{i|a}_*$, which gives
\begin{equation*}
    \lambda_b = \frac{c^{i|a}(0)}{C^{i|a}_*} (\mu - \mu_*^{s|as}) + \cO((\mu - \mu_*^{s|as})^2)= (C^{i|a}_*)^{-1} (-\beta v^{s|as} - \gamma(v^{s|as})^2) + \cO((v^{s|as})^3)
\end{equation*}
close to bifurcation onset, where the last equality is stated as it leads to an expression that can be compared to the previous $\lambda$. Hence, there is an exchange of stability between the base branch and the bifurcating steady-state branches, and all stabilities close to bifurcation onset can be determined by determining the stability of the base branch at the respective $\mu$-value. Note that $C^i_*>0$ whenever $\frac{1}{2\sigma_*}\tanh(\sigma_*\frac{L}{2}) + \frac{L}{4}\sech(\sigma_*\frac{L}{2})^2 < \frac{\sigma_*}{\alpha}\tanh(\sigma_*\frac{L}{2})$, so when no in-phase Hopf bifurcation exists, and $C^i_*<0$ when $\frac{1}{2\sigma_*}\tanh(\sigma_*\frac{L}{2}) + \frac{L}{4}\sech(\sigma_*\frac{L}{2})^2 > \frac{\sigma_*}{\alpha}\tanh(\sigma_*\frac{L}{2})$, in which case an in-phase Hopf bifurcation exists at some value for $\mu$ (see Theorem \ref{thm:leading-orderHopf}). Similarly, $C^a_*>0$ whenever $\frac{1}{2\sigma_*}\coth(\sigma_*\frac{L}{2}) - \frac{L}{4}\csch(\sigma_*\frac{L}{2})^2 < \frac{\sigma_*}{\alpha}\coth(\sigma_*\frac{L}{2})$ with obvious extension to $\sigma_*\to 0$, so when no anti-phase Hopf bifurcation exists, and $C_*^a<0$ if $\frac{1}{2\sigma_*}\coth(\sigma_*\frac{L}{2}) - \frac{L}{4}\csch(\sigma_*\frac{L}{2})^2 > \frac{\sigma_*}{\alpha}\coth(\sigma_*\frac{L}{2})$, for which there is an anti-phase Hopf bifurcation.

In summary, with an exchange of stability between base branch and steady-state bifurcation branches at the respective bifurcation points, 
\begin{eqnarray*}
    \mu>\mu_*^s: \;\lambda_b (\mu)
    \begin{cases}
        > 0 \quad \text{if} \quad \frac{1}{2\sigma_*}\tanh(\sigma_*\frac{L}{2}) + \frac{L}{4}\sech(\sigma_*\frac{L}{2})^2 < \frac{\sigma_*}{\alpha}\tanh(\sigma_*\frac{L}{2}) \quad\text{ (unstable base branch)} \\
        < 0 \quad \text{if} \quad \frac{1}{2\sigma_*}\tanh(\sigma_*\frac{L}{2}) + \frac{L}{4}\sech(\sigma_*\frac{L}{2})^2 > \frac{\sigma_*}{\alpha}\tanh(\sigma_*\frac{L}{2}) \quad\text{ (stable base branch)}
    \end{cases} \\
    \mu>\mu_*^{as}:\; \lambda_b (\mu)
    \begin{cases}
        > 0 \quad \text{if} \quad \frac{1}{2\sigma_*}\coth(\sigma_*\frac{L}{2}) - \frac{L}{4}\csch(\sigma_*\frac{L}{2})^2 < \frac{\sigma_*}{\alpha}\coth(\sigma_*\frac{L}{2}) \quad\text{ (unstable base branch)} \\
        < 0 \quad \text{if}  \quad \frac{1}{2\sigma_*}\coth(\sigma_*\frac{L}{2}) - \frac{L}{4}\csch(\sigma_*\frac{L}{2})^2 > \frac{\sigma_*}{\alpha}\coth(\sigma_*\frac{L}{2}) \quad\text{ (stable base branch)}
    \end{cases}
\end{eqnarray*}
and with flipped inequalities for respectively $\mu<\mu_*^s$ and $\mu<\mu_*^{as}$. Further, for the perturbation growth rates $\lambda^s$ and $\lambda^{as}$ from respectively the non-base symmetric steady-state branch for $\mu>\mu_*^s$ and asymmetric steady-state branch for $\mu>\mu_*^{as}$,
\begin{eqnarray*}
    \beta v^s<0: \;\lambda^s (\mu)
    \begin{cases}
        < 0 \quad \text{if} \quad \frac{1}{2\sigma_*}\tanh(\sigma_*\frac{L}{2}) + \frac{L}{4}\sech(\sigma_*\frac{L}{2})^2 < \frac{\sigma_*}{\alpha}\tanh(\sigma_*\frac{L}{2}) \\
        > 0 \quad \text{if} \quad \frac{1}{2\sigma_*}\tanh(\sigma_*\frac{L}{2}) + \frac{L}{4}\sech(\sigma_*\frac{L}{2})^2 > \frac{\sigma_*}{\alpha}\tanh(\sigma_*\frac{L}{2}) 
    \end{cases} \\
    \beta v^s>0:\; \lambda^s (\mu)
    \begin{cases}
        > 0 \quad \text{if} \quad \frac{1}{2\sigma_*}\tanh(\sigma_*\frac{L}{2}) + \frac{L}{4}\sech(\sigma_*\frac{L}{2})^2 < \frac{\sigma_*}{\alpha}\tanh(\sigma_*\frac{L}{2}) \\
        < 0 \quad \text{if} \quad \frac{1}{2\sigma_*}\tanh(\sigma_*\frac{L}{2}) + \frac{L}{4}\sech(\sigma_*\frac{L}{2})^2 > \frac{\sigma_*}{\alpha}\tanh(\sigma_*\frac{L}{2})
    \end{cases}
\end{eqnarray*}
and
\begin{eqnarray*}
    \beta v^{as}<0: \;\lambda^{as} (\mu)
    \begin{cases}
        < 0 \quad \text{if} \quad \frac{1}{2\sigma_*}\coth(\sigma_*\frac{L}{2}) - \frac{L}{4}\csch(\sigma_*\frac{L}{2})^2 < \frac{\sigma_*}{\alpha}\coth(\sigma_*\frac{L}{2}) \\
        > 0 \quad \text{if}  \quad \frac{1}{2\sigma_*}\coth(\sigma_*\frac{L}{2}) - \frac{L}{4}\csch(\sigma_*\frac{L}{2})^2 > \frac{\sigma_*}{\alpha}\coth(\sigma_*\frac{L}{2})
    \end{cases} \\
    \beta v^{as}>0:\; \lambda^{as} (\mu)
    \begin{cases}
        > 0 \quad \text{if} \quad \frac{1}{2\sigma_*}\coth(\sigma_*\frac{L}{2}) - \frac{L}{4}\csch(\sigma_*\frac{L}{2})^2 < \frac{\sigma_*}{\alpha}\coth(\sigma_*\frac{L}{2}) \\
        < 0 \quad \text{if}  \quad \frac{1}{2\sigma_*}\coth(\sigma_*\frac{L}{2}) - \frac{L}{4}\csch(\sigma_*\frac{L}{2})^2 > \frac{\sigma_*}{\alpha}\coth(\sigma_*\frac{L}{2}).
    \end{cases}
\end{eqnarray*}
\end{proof}

The previous proof directly implies the following statement on the location of the steady-state fold bifurcation lines.

\begin{cor}[fold bifurcation lines] \label{cor:fold}
    If $\gamma \neq 0$, at $\mu = \mu^{if}$ with
    \begin{equation*}
        \mu^{if} = \frac{1}{\sigma_*}(\alpha + \frac{\beta^2}{4\gamma^2})\coth(\frac{L}{2}\sigma_*),
    \end{equation*}
    the symmetric steady-states $v_1^s$ and $v_2^s$ merge and create a fold bifurcation point, and, at $\mu = \mu^{af}$ with
    \begin{equation*}
        \mu^{af} = \frac{1}{\sigma_*}(\alpha +\frac{\beta^2}{4\gamma^2})\tanh(\frac{L}{2}\sigma_*),
    \end{equation*}
    the antisymmetric steady-states $v_1^{as}$ and $v_2^{as}$ merge and create a fold bifurcation point. Note that $\mu^{if} \uparrow \infty$ as $\sigma_*\downarrow 0$ and that $\mu^{af} \uparrow \frac{L}{2}(\alpha +\frac{\beta^2}{4\gamma^2})$ as $\sigma_*\downarrow 0$, so that the symmetric steady-states do not merge for $\sigma_* = 0$ while the antisymmetric steady-states still do so at a finite $\mu^{af}$ as $\sigma_*\downarrow 0$.
\end{cor}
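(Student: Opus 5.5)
The plan is to read the fold points off the explicit nontrivial roots $v^s_{1|2}$ and $v^{as}_{1|2}$ from the proof of Theorem \ref{thm:asymptexpansion_stab_steady-states}, without any new analysis. For $\gamma\neq 0$ the nontrivial symmetric steady states solve the quadratic
\begin{equation*}
Q^s(v;\mu) := \gamma v^2 + \beta v - \bigl(\alpha - \mu\sigma_*\tanh(\tfrac{L}{2}\sigma_*)\bigr) = 0,
\end{equation*}
which is $F^i(v,\mu,0,\sigma_*)/(-v)=0$. The asymmetric ones solve the same quadratic $Q^{as}$ with $\tanh$ replaced by $\coth$, obtained from $F^a$. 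The two roots merge exactly when the discriminant vanishes:
\begin{equation*}
\beta^2 + 4\alpha\gamma - 4\gamma\mu\sigma_*\tanh(\tfrac{L}{2}\sigma_*) = 0 .
\end{equation*}
The analogous equation with $\coth$ holds in the asymmetric case. Solving this linear equation in $\mu$ gives the symmetric fold point
\begin{equation*}
\mu^{if} = \frac{1}{\sigma_*}\Bigl(\alpha + \frac{\beta^2}{4\gamma}\Bigr)\coth(\tfrac{L}{2}\sigma_*),
\end{equation*}
using $1/\tanh=\coth$. Likewise $\mu^{af}$ follows with $\tanh$ in place of $\coth$. The merged root is $v=-\beta/(2\gamma)$.

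The exact algebra yields the coefficient $\beta^2/(4\gamma)$, not $\beta^2/(4\gamma^2)$ as displayed. I will check this against the existence condition $4\gamma(\alpha-\mu\sigma_*\tanh)>-\beta^2$ of Theorem \ref{thm:asymptexpansion_stab_steady-states}, whose boundary case is precisely the equation above. I will then state the corrected constant. This does not affect the qualitative claims.

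Next I verify that the points are genuine folds and not degenerate. At the double root, $\partial_v Q^{s} = 2\gamma v + \beta = 0$. The transversality condition is $\partial_\mu Q^{s} = \sigma_*\tanh(\tfrac{L}{2}\sigma_*) \neq 0$ for $\sigma_*>0$, and $\partial_{vv}Q^s = 2\gamma\neq 0$. So the solution set near $(v,\mu)=(-\beta/(2\gamma),\mu^{if})$ is a parabola opening in $\mu$: a saddle-node in the steady-state equation. To connect this with the dynamics, I reuse the characteristic function $\dm^{i|a}(\lambda)=\lambda+\alpha-2\beta v-3\gamma v^2-\mu c^{i|a}(\lambda)$ from that proof. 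At $\lambda=0$ it reduces to $-\partial_v(vQ)$ evaluated on the branch, which equals $-v\,\partial_vQ$ there and hence vanishes at the double root. So a real eigenvalue crosses zero, consistent with a fold.

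The limits as $\sigma_*\downarrow 0$ use $\sigma_*\tanh(\tfrac{L}{2}\sigma_*)\sim \tfrac{L}{2}\sigma_*^2\to 0$ and $\sigma_*\coth(\tfrac{L}{2}\sigma_*)\to \tfrac{2}{L}$, as in Remark \ref{rmk:sigma=0}. In the symmetric case the quadratic then loses its $\mu$-dependence, so $\mu^{if}\uparrow\infty$ (taking $\alpha+\beta^2/(4\gamma)>0$; otherwise the sign flips). Hence no finite fold exists at $\sigma_*=0$. In the asymmetric case $\mu^{af}\to\tfrac{L}{2}(\alpha+\beta^2/(4\gamma))$, which is finite.

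The only delicate point is the symmetric limit, since $\partial_\mu Q^s\to 0$ there and the transversality condition degenerates. I would handle it directly: at $\sigma_*=0$ the equation $Q^s=0$ is independent of $\mu$, so no fold can occur.
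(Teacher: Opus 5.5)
Your route is the paper's own: there the corollary is read off from the explicit roots $v^{s}_{1|2}$, $v^{as}_{1|2}$ in the proof of Theorem~\ref{thm:asymptexpansion_stab_steady-states}, with the fold located where the discriminant vanishes. Your transversality check and the identity $\dm^i(0)=-v\,\partial_vQ^s$ on the branch go beyond that one-line deduction.

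Your correction to $\alpha+\beta^2/(4\gamma)$ is right, since the displayed $\gamma^2$ is a typo. It does, however, affect the qualitative claims. For $\gamma<0$ the constant can be $\le 0$ (e.g.\ $\alpha=0.03$, $\beta=1$, $\gamma=-1$, used in the paper's numerics). Then $\mu^{if}\to-\infty$ or $\mu^{if}\equiv 0$, so positivity must be an explicit hypothesis for both limits.

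There is also a caveat on your saddle-node claim. The steady-state equation is $v\,Q^s(v)=0$, not $Q^s=0$. For $\beta=0$ the double root $-\beta/(2\gamma)=0$ lies on the base branch and $\mu^{if}=\mu_*^s$, so the merger is the pitchfork at $u_*$. Your saddle-node conclusion therefore needs $\beta\neq0$. With these provisos the symmetric half is sound.

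The genuine gap is in the asymmetric half, and it is inherited from the paper. The step ``the asymmetric ones solve $Q^{as}=0$, obtained from $F^a$'' fails. $F^a$ encodes only the equation at $x=0$; the one at $x=L$ is recovered through a half-period time shift. At $\omega=0$ that shift gives $v^+=v^-$, not $v^+=-v^-$.

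To see the failure directly, take $(u^-,u^+)=u_*^s(1,1)+v(1,-1)$. Then $\partial_n|_{x=L}(u-u_*)=-\sigma_*\coth(\tfrac{L}{2}\sigma_*)\,v$ and $\fint(-v)=\alpha v+\beta v^2-\gamma v^3$. For $v\neq0$ the equation at $x=L$ therefore reads $\gamma v^2-\beta v-(\alpha-\mu\sigma_*\coth(\tfrac{L}{2}\sigma_*))=0$. Its roots are the negatives of those of $Q^{as}$, and subtracting the two quadratics shows that a common nonzero root forces $\beta=0$.

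Hence for $\beta\neq0$ the $v^{as}_{1|2}$ are not steady states of \eqref{eq:sys_tocubic}, and nothing folds at $\mu^{af}$. The true asymmetric states carry a nonzero symmetric component, quadratic in the antisymmetric amplitude. By the swap symmetry $u^-\leftrightarrow u^+$ they generically leave $u_*$ at $\mu_*^{as}$ in a pitchfork. Any fold must then be located from the coupled steady-state equations for $(u^-,u^+)$.

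For $\beta=0$ the antisymmetric ansatz is consistent. But then $\mu^{af}=\frac{\alpha}{\sigma_*}\tanh(\tfrac{L}{2}\sigma_*)=\mu_*^{as}$, and the merger is again just the pitchfork at $u_*$. So your argument proves the $\mu^{if}$ part (for $\beta\neq0$, with the corrected constant). It cannot prove the $\mu^{af}$ part, which does not hold as stated.
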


\section{Stability and instability for up-to-cubic boundary kinetics}\label{sec:stab}

In the previous section, the stability properties of the symmetric and asymmetric steady-state bifurcation branches of the system \eqref{eq:sys_tocubic} specialized to cubically expanded reaction kinetics $\fint$ were provided in Theorem \ref{thm:asymptexpansion_stab_steady-states}. Furthermore, Theorem \ref{thm:leading-orderHopf} provided information to the stability change to instability for the symmetric base branch $u_*$ when $\mu$ is increasing respectively through the in-phase and anti-phase Hopf points. This section dives deeper into the stability properties of the Hopf branches and provides expressions for the critical curve in the parameter space where the subcritical Hopf branches turn supercritical.

I am first stating the Stability Conjecture. It remains an open problem to prove (or disprove) this conjecture from the boundary-integral equations \eqref{eq:bndryintegraleq_i} and \eqref{eq:bndryintegraleq_a} in an elegant and minimal way.

\begin{conj}[Exchange of stability with base branch] \label{conj:stability}
    I conjecture that if the symmetric base branch $u_*$ loses stability as $\mu$ increases through either the in-phase Hopf point $\mu_*^{i}$, the anti-phase Hopf point $\mu_*^a$, the symmetric steady-state bifurcation point $\mu^s_*$, or the asymmetric steady-state bifurcation point $\mu_*^{as}$, the respective non-base bifurcation branch has the opposite stability than the base branch at the same value for $\mu$. Hence, the base branch and, e.g., the emerging in-phase Hopf branch cannot be either simultaneously stable or unstable at the same point $\mu\neq \mu_*^i$ close to bifurcation onset $\mu=\mu_*^i$.
\end{conj}

\begin{proof}[Proof strategy]
    Theorem \ref{fig:steadystatebranches} has established the statement for the bifurcating steady-state branches. The strategy for the Hopf branches is to expand the respective Fredholm operator $\cL^i$ and $\cL^a$ in the respective oscillation amplitudes $r^i$ and $r^a$ and corresponding eigenvalues $\lambda$. It then remains to check that $\Re(\lambda) < 0$ for the in-phase Hopf branch whenever $\mu_2^i > 0$ and $\Re(\lambda) > 0$ whenever $\mu_2^i < 0$, and similarly for the anti-phase Hopf branch. This quite tedious computation has been done for Theorem 4.1 of \cite{pelz2026oscillations} for a single dynamic boundary at $x=0$ adjacent to a diffusive field on the infinite half line $(0, \infty)$. Therefore, I defer a similar computation for the coupled boundary system \eqref{eq:sys_tocubic}, in search for a \emph{more elegant} and \emph{minimal way} to determine the Hopf branch stability with known base branch stability.
\end{proof}

\begin{theorem}[Onset stability of in-phase and anti-phase Hopf branches] \label{thm:stabilityofbranches}
    Assuming the Stability Conjecture \ref{conj:stability} holds, generically the in-phase and anti-phase bifurcation branches are respectively spectrally stable when $\mu_2^i>0$ and $\mu_2^a>0$ and respectively spectrally unstable when $\mu_2^i<0$ and $\mu_2^a<0$. The transition between sub- and supercriticality is given by the parameter curve $\gamma = \gamma_\text{crit}^{i|a}$ with
    \begin{equation*} \label{eq:gammacrit}
        \gamma_\text{crit}^{i|a}(\alpha, \beta, L, \sigma) := \frac{4}{3}\beta^2 \left( \Im((\Lambda_0^{i|a})^{-1} + (2\Lambda_2^{i|a})^{-1}) \frac{\Re(\Lambda_{1, \omega}^{i|a})}{\Im(\Lambda_{1,\omega}^{i|a})} - \Re((\Lambda_0^{i|a})^{-1} + (2\Lambda_2^{i|a})^{-1}) \right)
    \end{equation*}
    using the coefficients introduced in \eqref{eq:d_coeffs}.
\end{theorem}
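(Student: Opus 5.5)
The argument has two parts: a stability statement that follows from the Stability Conjecture~\ref{conj:stability} together with Theorem~\ref{thm:leading-orderHopf}, and an explicit computation of the sign change of $\mu_2^{i|a}$ from Theorem~\ref{thm:asymptcorr}.

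\emph{Stability.} By Theorem~\ref{thm:leading-orderHopf}, $\Re(\partial_\mu\lambda^{i|a})>0$ at $\mu_*^{i|a}$. The base branch $u_*$ is therefore spectrally stable (in the critical in-phase, respectively anti-phase, direction) for $\mu<\mu_*^{i|a}$ and unstable for $\mu>\mu_*^{i|a}$, close to onset. The expansion \eqref{eq:asymptexpansions} gives $\mu-\mu_*^{i|a}=\mu_2^{i|a}(r^{i|a})^2+\cO((r^{i|a})^4)$. If $\mu_2^{i|a}>0$, the Hopf branch exists only for $\mu>\mu_*^{i|a}$, where the base branch is unstable, and the Conjecture then yields spectral stability of the branch. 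If $\mu_2^{i|a}<0$, the branch exists for $\mu<\mu_*^{i|a}$, where the base branch is stable, so the branch is unstable. Here ``generically'' means excluding the degenerate case $\mu_2^{i|a}=0$, together with a tacit assumption that all other eigenvalues of the base branch lie in the open left half plane. That assumption is needed because the Conjecture transfers stability only through the single critical pair, and I would state it explicitly.

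\emph{Critical curve.} Since $\gamma_\text{crit}^{i|a}$ is defined as the zero set of $\mu_2^{i|a}$, it remains to solve $\mu_2^{i|a}=0$ for $\gamma$. By Theorem~\ref{thm:asymptcorr}, and provided the denominator $\Im(\Lambda_{1,\mu}\overline{\Lambda_{1,\omega}})\neq 0$ (a transversality condition that I expect follows from Assumption~\ref{assu:bif}), this is equivalent to $\Im(M\,\overline{\Lambda_{1,\omega}})=0$. Write $M=\tfrac34\gamma+\beta^2 Q$ with $Q:=(\Lambda_0)^{-1}+(2\Lambda_2)^{-1}$. Since $\gamma$ is real, the condition becomes
\[
\tfrac34\gamma\,\Im(\overline{\Lambda_{1,\omega}})+\beta^2\bigl(\Im Q\,\Re\Lambda_{1,\omega}-\Re Q\,\Im\Lambda_{1,\omega}\bigr)=0 .
\]
Dividing by $\Im\Lambda_{1,\omega}$, which is nonzero because $\partial_1\dm^{i|a}(i\omega_*)$ is non-real by the computation in the proof of Theorem~\ref{thm:leading-orderHopf}, gives
\[
\gamma=\tfrac43\beta^2\Bigl(\Im Q\,\frac{\Re\Lambda_{1,\omega}}{\Im\Lambda_{1,\omega}}-\Re Q\Bigr),
\]
which is the stated formula. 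Note also that $\Lambda_0\neq 0$ and $\Lambda_2\neq0$ hold by Assumption~\ref{assu:simpledegen}~(2i|a).

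The main obstacle is sign bookkeeping. Dividing by $\Im\Lambda_{1,\omega}$ and by the denominator of $\mu_2$ fixes which side of $\gamma=\gamma_\text{crit}$ corresponds to $\mu_2>0$. The theorem only needs the curve itself, but for identifying the super-critical side I would determine the sign of $\Im(\Lambda_{1,\mu}\overline{\Lambda_{1,\omega}})$ using the Mittag-Leffler representations $\tilde c^{i|a}$ from the proof of Theorem~\ref{thm:leading-orderHopf}. There $\Lambda_{1,\mu}=-\tilde c$ and $\Lambda_{1,\omega}=i(1-\mu_*\tilde c')$, and the modal positivity used there should settle the sign.
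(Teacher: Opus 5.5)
Your proposal is correct and follows the paper's proof. Stability is read off from the Conjecture together with $\Re(\partial_\mu\lambda^{i|a})>0$ from Theorem~\ref{thm:leading-orderHopf}, and the curve comes from $\mu_2^{i|a}=0 \Leftrightarrow \Im(M^{i|a}\,\overline{\Lambda_{1,\omega}^{i|a}})=0$ after isolating the $\frac34\gamma$ contribution to $\Re(M^{i|a})$; your caveat that all other modes of $u_*$ must be stable is sharper than the paper's exclusion of only $\mu_*^i=\mu_*^a$.

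One correction concerns your justification of $\Im\Lambda_{1,\omega}\neq 0$. Since $\Lambda_{1,\omega}=i\,\partial_1\dm$, one has $\Im\Lambda_{1,\omega}=\Re(\partial_1\dm)$, so non-reality of $\partial_1\dm$ only gives $\Re\Lambda_{1,\omega}\neq 0$. The nonvanishing of $\Im\Lambda_{1,\omega}$, which the paper uses without comment, instead follows from the modal positivity in the proof of Theorem~\ref{thm:leading-orderHopf}. With $\mu_*=\theta$ one gets $\Re(1-\mu_*\tilde c')=(\Im\tilde c-\omega_*\Re\tilde c')/\Im\tilde c>0$.
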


\begin{proof}
    By Theorem \ref{thm:leading-orderHopf}, the symmetric base branch $u_*$ loses stability as $\mu$ increases through $\mu_*^{i}$. Since the assumption is that the Stability Conjecture \ref{conj:stability} holds, the in-phase Hopf branch must be spectrally unstable for $\mu_2^i<0$ (in-phase Hopf branch bending toward $\mu < \mu_*^i$ where $u_*$ is stable) and spectrally stable for $\mu_2^i>0$ (in-phase Hopf branch bending toward $\mu > \mu_*^i$ where $u_*$ is unstable), except for the degenerate case of $\mu_*^i = \mu_*^a$. The same holds for the anti-phase Hopf branch that emerges at such generically different $\mu=\mu_*^a$.

    For the Hopf branches to transition near the bifurcation points ($r^i$ and $r^a$ are respectively small) between being spectrally stable to spectrally unstable, respectively $\mu_2^i = 0$ and $\mu_2^a = 0$ have to be satisfied, which amounts to 
    \begin{equation*}
        \Im(M^{i|a} \; \overline{\Lambda_{1, \omega}^{i|a}}) = \Im(M^{i|a}) \Re(\Lambda_{1, \omega}^{i|a}) - \Re(M^{i|a}) \Im(\Lambda_{1, \omega}^{i|a}) \overset{!}{=} 0.
    \end{equation*}
    Noticing that the first summand of $\Re(M^{i|a})$ is $\frac{3}{4}\gamma$ proves the claim.
\end{proof}

\section{Competition between attractors for up-to-cubic boundary kinetics} \label{sec:attractorcompetition}

In Remark \ref{rmk:steady-state-bif}, I have provided the location of the bifurcation points to symmetric and asymmetric steady-states branching off the symmetric base state $u_*$ that exist for wide parameter ranges. This means that in the simple example system \eqref{eq:sys_tocubic} with a cubically expanded nonlinearity $\fint$ at both boundaries multiple dynamical behaviors could be possible. In this section the attempt is made to provide insight into which of the stable solutions that potentially coexist are selected by the system, so as to know which solution the system will converge to for large times. 

\begin{theorem}[Number of stable solutions close to bifurcation onset] \label{thm:num_stable_sol}
    The functions $\dm^i(\lambda; \mu, \sigma)$ defined in \eqref{eq:di} and $\dm^a(\lambda; \mu, \sigma)$ defined in \eqref{eq:da} are \emph{Evans functions} for system \eqref{eq:sys_tocubic} and have maximally two roots each, $(\lambda_1^i, \lambda_2^i)$ and $(\lambda_1^a, \lambda_2^a)$, with positive real parts. Therefore, at no point an in-phase oscillation is competing or mixing with a nontrivial (non-base) symmetric steady-state and at no point an anti-phase oscillation is competing or mixing with an asymmetric steady-state close to their onset at the base branch. By \emph{mixing} a state is meant that is a linear combination of two existent attracting states with nearly the same real part magnitude of the respective eigenvalue $\lambda$ that represent their growth rate away from the base state $u_*$.
\end{theorem}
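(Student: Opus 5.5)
My plan is to treat the two claims separately: first that $\dm^i$ and $\dm^a$ are Evans functions for \eqref{eq:sys_tocubic}, and then the bound on unstable roots together with its dynamical consequence. For the Evans-function property I will use the reduction of Section \ref{s:2}. For $\lambda\in\bbC\setminus(-\infty,-\sigma_*^2]$ the bulk equation \eqref{eq:pertdiff} is solved explicitly by the $\sinh$-combination. Hence $\lambda$ is an eigenvalue of the linearization about $u_*$ if and only if the $2\times2$ determinant $\dm_*=\dm_*^i\dm_*^a$ in \eqref{eq:d*} vanishes. Both factors are analytic in $\lambda$ there: $\sqrt{z}\tanh(\sqrt{z}L/2)$ and $\sqrt{z}\coth(\sqrt{z}L/2)$ are even in $\sqrt z$, hence entire or meromorphic in $z=\lambda+\sigma_*^2$, and this also removes the branch cut. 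The multiplicity of a root of $\dm^{i|a}$ equals the algebraic multiplicity of the eigenvalue on the symmetric, respectively antisymmetric, invariant subspace spanned by $(1,1)^T$, respectively $(1,-1)^T$. I would show this by the standard argument comparing $\partial_\lambda\dm$ with the adjoint pairing. This is essentially a bookkeeping step.

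For the root count I will use the form found in the proof of Theorem \ref{thm:leading-orderHopf}, namely $\dm^{i|a}(\lambda)=\lambda+\alpha-\mu\,\tilde c^{i|a}(\lambda+\sigma_*^2)$. The Mittag-Leffler expansions give
\[
\tilde c^i(s)=\frac{4}{L}\sum_{n\ge0}\frac{s}{s+4I_n},\qquad \tilde c^a(s)=\frac{2}{L}+\frac4L\sum_{n\ge1}\frac{s}{s+4A_n}.
\]
I will then count zeros in $\Re\lambda>0$ with the argument principle on a large half-disc. On the imaginary axis I use the parametrization $W^{i|a}$ and its strict monotonicity, both proved in Theorem \ref{thm:leading-orderHopf}. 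On the large arc, $\tilde c\sim\sqrt{s}$, so $\dm\sim\lambda$ and the arc contributes winding $\pi$. The key structural fact is that $\Im\tilde c^{i|a}(\sigma_*^2+i\omega)$ has a sign and that $\omega/\Im(\cdot)$ versus $\alpha/\Re(\cdot)$ crosses exactly once, so the image of $i\bbR_{\ge0}$ crosses the real axis at most once. By conjugate symmetry the total winding is then at most $2\cdot 2\pi$, which gives at most two roots, occurring either as a complex-conjugate pair or as two real ones.

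An alternative route, which I would pursue if the winding bookkeeping fails, is a continuation argument in $\mu$. At $\mu=0$ the only root is $\lambda=-\alpha<0$. Roots enter the right half-plane only through $i\bbR$. Assumption \ref{assu:simpledegen} together with Theorem \ref{thm:leading-orderHopf} shows they can cross only at $\pm i\omega_*^{i|a}$, which is a single conjugate pair, or at $\lambda=0$, which is a simple real root at $\mu_*^{s|as}$. I would also need to exclude roots entering from infinity, and I would do this using $|\tilde c|\lesssim|\lambda|^{1/2}$. Each crossing is transversal and in the positive direction, by Theorem \ref{thm:leading-orderHopf} and the sign of $C_*^{i|a}$ in Theorem \ref{thm:asymptexpansion_stab_steady-states}. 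Combining this with the one-root-crossing monotonicity should bound the count by two.

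Finally, for the competition statement I will argue as follows. By Theorem \ref{thm:asymptexpansion_stab_steady-states}, a symmetric steady-state bifurcation, i.e. a real root crossing zero, occurs with $C^i_*>0$ exactly when no in-phase Hopf point exists. When a Hopf point does exist, $C^i_*<0$, which means the real pair is the one that already merged into the complex pair. Because $\dm^i$ has at most two unstable roots, near onset the in-phase Hopf mode and a symmetric steady mode cannot both be unstable directions of $u_*$ with comparable growth rates. The same reasoning applies with $a$ in place of $i$. I expect the main obstacle to be the rigorous root bound, specifically controlling the winding on the imaginary axis near $\omega=0$ and at $\sigma_*=0$, where $\tilde c^a$ has the constant mode $2/L$.
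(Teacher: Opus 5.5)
Your main plan follows the paper's argument: the argument principle on a right half-disc, winding $\pi$ from the large arc, and at most one real-axis crossing on each half of the vertical segment, giving at most $\tfrac{3\pi}{2}$ per half because $\dm(0)$ is real. Conjugate symmetry then bounds the total by $4\pi$. The gap is in how you justify the crossing step.

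\textbf{The crossing step uses the wrong monotonicity.} The monotonicity of $W^{i|a}$, i.e.\ uniqueness of the root of $\omega/\Im\tilde c=\alpha/\Re\tilde c$, concerns $\Re\dm=\Im\dm=0$ with $\mu$ eliminated. It tells you at which single $\mu$ an imaginary root can exist. For \emph{fixed} $\mu$ it says nothing about how often $\Im\dm^{i|a}(i\omega)=\omega-\mu\Im\tilde c^{i|a}(\sigma_*^2+i\omega)$ changes sign; $\alpha/\Re\tilde c$ does not even enter there, and neither does the mere sign of $\Im\tilde c$. What you need, and what the paper uses, is
\begin{equation*}
\Im\dm^{i}(i\omega)=\omega\bigl(1-\mu F(\omega)\bigr),\qquad F(\omega)=\frac{16}{L}\sum_{n\ge 0}\frac{I_n}{(\sigma_*^2+4I_n)^2+\omega^2},
\end{equation*}
with $F$ strictly decreasing. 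Hence there is at most one sign change on $(0,\infty)$ for $\mu>0$ and none for $\mu\le 0$. For $\dm^a$, replace $I_n$ by $A_n$ with $n\ge 1$. This is the numerator monotonicity inside the proof of Theorem~\ref{thm:leading-orderHopf}, not the monotonicity of $W$.

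\textbf{Your anticipated obstacle is not one.} The constant mode $2/L$ of $\tilde c^a$ is real and drops out of the imaginary part. Also, $\sqrt{s}\coth(\sqrt{s}L/2)$ is analytic at $s=0$, so neither $\sigma_*=0$ nor $\omega\to 0$ causes trouble.

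\textbf{The contour needs adjusting.} A contour along $i\bbR$ runs through zeros whenever $\mu\in\{\mu_*^{i|a},\mu_*^{s|as}\}$. The paper moves the vertical segment to $\Re\lambda=-\varepsilon$; a shift to $+\varepsilon$ or an indentation works equally well. On the shifted line $\Re(\lambda+\sigma_*^2+4I_n)>0$ still holds, so you again get a strictly decreasing $F_\varepsilon$.

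\textbf{Your continuation alternative is sound and genuinely different.} It is in fact where $W$ is the right tool. The unique root of $W^{i|a}$ means imaginary-axis zeros with $\omega>0$ occur only at $\mu=\mu_*^{i|a}$, as a simple conjugate pair crossing rightward. Zeros at $\lambda=0$ occur only at $\mu_*^{s|as}$, with crossing direction given by the sign of $C_*^{i|a}$. No zeros enter from infinity or from the poles $\lambda=-\sigma_*^2-4I_n$ (respectively $-\sigma_*^2-4A_n$, $n\ge1$), and there are none for $\mu\le 0$.

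This route gives the exact count as a function of $\mu$, which is sharper than the paper's bound. For instance, for $\sigma_*>0$ the count goes $0\to 2\to 1$ through $\mu_*^{i|a}<\mu_*^{s|as}$ when $C_*^{i|a}<0$, and $0\to 1$ when $C_*^{i|a}>0$. The price is that it leans on the global uniqueness results of Theorem~\ref{thm:leading-orderHopf}. It also needs a separate look at the degenerate point $C_*^{i|a}=0$, where the zero at $\lambda=0$ is double.
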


\begin{proof}
    Let $\Gamma_{-\varepsilon, R} := \{z = -\varepsilon+Re^{i\varphi} \;|\; \varphi\in [-\pi, \pi]\} \cup \{z = -\varepsilon+iR-\tau i2R \;|\; \tau\in(0, 1)\}$ for $0<\varepsilon\ll 1$ and $R>0$ be a closed curve parametrized in counterclockwise direction and with $\Re(\lambda)>-\varepsilon$ for all $z\in \mathrm{int}(\Gamma_{-\varepsilon,R})$, where $\mathrm{int}(\Gamma)$ denotes the open interior of a closed curve $\Gamma$. Let $\Gamma_{-\varepsilon, R}^C := \{z = -\varepsilon+Re^{i\varphi} \;|\; \varphi\in [-\pi, \pi]\}$ be the half-circle arc of $\Gamma_{-\varepsilon, R}$ and let $\Gamma_{-\varepsilon, R}^I := \{z = -\varepsilon+iR-\tau i2R \;|\; \tau\in(0, 1)\}$ be the remaining closing line on the imaginary axis of $\Gamma_{-\varepsilon, R}$.

    Since $\tanh(\sqrt{\sigma_*^2+\lambda}\frac{L}{2})$ only has poles at $\{\lambda_k\}$ with 
    \begin{equation*}
        i\frac{2k+1}{2}\pi = \sqrt{\sigma_*^2+\lambda}\frac{L}{2} \quad \text{for} \quad k\in\bbZ \quad \Leftrightarrow \quad \lambda_k = -\frac{(2k+1)^2}{L^2}\pi^2 - \sigma_*^2 < 0 \quad \text{for} \quad k\in\bbN,
    \end{equation*}
    the Evans function $\dm^i$ has poles neither along the curve $\Gamma_{-\varepsilon,R}$ nor in the closure $\overline{\mathrm{int}(\Gamma_{-\varepsilon,R})}$, even for $\varepsilon\downarrow 0$. Thus, the number $Z^i$ of $\lambda$ with $\Re(\lambda)\geq 0$ that are zeros of $\dm^i$ can be computed using the argument principle by
    \begin{equation*}
        Z^i = \lim_{\varepsilon\downarrow 0}\lim_{R\uparrow\infty} \frac{1}{2\pi i} \int_{\Gamma_{-\varepsilon, R}} \frac{\partial_\lambda \dm^{i}(\lambda; \mu, \sigma_*)}{\dm^{i}(\lambda; \mu, \sigma_*)} \; d\lambda = \lim_{\varepsilon\downarrow 0}\lim_{R\uparrow \infty} \Delta_{{\Gamma_{-\varepsilon, R}}} \mathrm{arg}(\dm^{i}(\lambda; \mu, \sigma_*))
    \end{equation*}
    where the latter right-hand side denotes the change of the argument of $\dm^i$ along ${\Gamma_{-\varepsilon, R}}$, which is also called the winding number of $\dm^i$ along ${\Gamma_{-\varepsilon, R}}$.
    Instead of dealing with $\partial_\lambda \dm^i / \dm^i$ explicitly, it is easier to proceed with analyzing the behavior of the argument of $\dm^i$. I will do so in the following. Let $z := \sqrt{\sigma_*^2+\lambda}$, taking the principal branch so that $\Re(z)\geq0$. Then 
    \begin{equation*}
        \dm^i(\lambda) = \lambda + \alpha - \mu c^i(\lambda) \quad \text{with} \quad c^i(\lambda) = \sqrt{\sigma_*^2+\lambda} \;\tanh(\sqrt{\sigma_*^2+\lambda}\frac{L}{2}) = z \tanh(z\frac{L}{2}).
    \end{equation*}
    Recall from the proof of Theorem \eqref{thm:leading-orderHopf} that, using the Mittag-Leffler expansion of $\tanh$,
    \begin{equation*}
        c^i(\lambda) = \frac{4}{L} \sum_{n=0}^\infty \frac{z^2}{z^2 + 4I_n} \quad \text{with} \quad I_n := \left(\frac{\pi}{L}\frac{2n+1}{2}\right)^2 \quad \forall n\in \bbN
    \end{equation*}
    and with summands of size $\cO(I_n^{-1}) = \cO(n^{-2})$. Hence, the series converges locally uniformly away from the points $\{z^2 = -4I_n \;|\; n\in \bbN\}$ (the poles of $\tanh(z\frac{L}{2})$), so that $\dm^i$ is analytic on $\{\Re(\lambda) > -4I_0 = \frac{\pi^2}{L^2}\} \supset \{\Re(\lambda) \geq 0\}$, its only singularities being the poles at $\lambda = -\sigma_*^2 - 4I_n < 0$.

    I will use the following identities repeatedly, valid for $I > 0$:
    \begin{equation} \label{eq:ReIm_ci}
        \Re\left(\frac{z^2}{z^2+I}\right) = \frac{|z^2|^2 + I\Re(z^2)}{|I + z^2|^2}, \quad \Im\left(\frac{z^2}{z^2+I}\right) = \frac{I\Im(z^2)}{|I + z^2|^2}.
    \end{equation}
    All zeros of $\dm^i$ in $\{\Re(\lambda)\geq 0\}$ lie in a bounded set. One can see that by noticing that for $\Re(\lambda)\geq 0$ and $|\lambda|\to\infty$ one has $\mathrm{arg}(\sigma_*^2+\lambda) \in [-\frac{\pi}{2}, \frac{\pi}{2}]$, $\mathrm{arg}(z) \in [-\frac{\pi}{4}, \frac{\pi}{4}]$, and $\Re(z) \geq \frac{|z|}{\sqrt{2}} \to \infty$, so that $\tanh(z\frac{L}{2})\to 1$ uniformly. If $\dm^i(\lambda)=0$, then $|\lambda|\leq \alpha + |\mu||z||\tanh(z\frac{L}{2})| \leq \alpha + 2|\mu|\sqrt{\sigma_*^2+|\lambda|}$ for large $|\lambda|$, which is impossible for large $|\lambda|$. Therefore, zeros in the closed right half-plane lie in a bounded set and are finitely many. The limit as $R\to \infty$ therefore stabilizes at a $Z^i = \#\{\text{zeros of $\dm^i$ in $\{\Re(\lambda)\geq 0\}$}\}$.

    The same asymptotics give on the arc $\Gamma_{-\varepsilon, R}^C$ with $|\mathrm{\arg(\lambda)|\leq \frac{\pi}{2}}$ that $\dm^i(\lambda) \sim \lambda(1-\cO(R^{-1/2}))$ as $\varepsilon\downarrow 0$, so that the change of $\mathrm{arg}(\dm^i)$ along $\Gamma_{-\varepsilon, R}^C$ tends to $\pi$ as $R\uparrow \infty$ and as $\varepsilon\downarrow 0$.

    I am now regarding the sign structure of $\Im(\dm^i)$ generally on vertical lines in $\bbC$. With fixed $\varepsilon \in [0, \frac{\pi^2}{L^2})$, consider the line $\{\lambda = -\varepsilon+i\tau\;|\; \tau\in\bbR\}$, with $\Re(z^2 + I_n) \geq I_0 - \varepsilon > 0$ for all $n\in\bbN$. By \eqref{eq:ReIm_ci}, 
    \begin{equation*}
        \Im(\dm^i(-\varepsilon+i\tau)) = \tau(1-\mu F_\varepsilon(\tau)) \quad \text{where}\quad F_\varepsilon := \frac{16}{L} \sum_{n=0}^\infty \frac{I_n}{(\sigma_*^2-\varepsilon+I_n)^2 + \tau^2}.
    \end{equation*}
    The function 
    \begin{subequations} \label{note:Feps}
    \begin{equation} \label{note:increasingFeps}
        \text{$F_\varepsilon$ is finite, continuous, strictly decreasing in $\tau\geq 0$, and $F_\varepsilon(\tau) \downarrow 0$ as $\tau\to\infty$.}
    \end{equation}
    Furthermore, the map
    \begin{equation} \label{note:onezero_posImline}
        \tau \mapsto 1-\mu F_\varepsilon(\tau) \text{ has at most one zero on $(0, \infty)$}.
    \end{equation}
    \end{subequations}
    For $\mu > 0$, this map is strictly increasing and, for $\mu\leq 0$, it is greater than 1. Consequently, $\Im(\dm^i(-\varepsilon+i\tau))$ changes sign at most once for $\tau\in(0, \infty)$, and, for large $\tau$, $\Im(\dm^i(-\varepsilon+i\tau)) \sim \tau > 0$. Due to the conjugation symmetry $\dm^i(\overline{\lambda}) = \overline{\dm^i(\lambda)}$, the behavior of $\dm^i$ for $\tau<0$ mirrors that for $\tau>0$ and $\dm^i$ is real on the real axis. Now all foundational information is gathered to prove the claim.

    Along the counterclockwise contour $\Gamma_{-\varepsilon, R}$ with small $\varepsilon>0$ to avoid the poles of $\dm^i$, I collect all zeros of $\dm^i$ in $\{\Re(\lambda) \geq -\varepsilon\}$. The contribution of the arc $\Gamma_{-\varepsilon, R}^C$ to the winding number of $\dm^i$ is $\pi + o(1)$ as $R\to \infty$. Now specifically focus on the upper half of the vertical line $\Gamma_{-\varepsilon, R}^I$ with $\Im(\lambda)$ ranging from $R$ to 0. Choosing the continuous branch of $\arg(\dm^i)$ with $\arg(\dm^i)\to \frac{\pi}{2}$ as $R\to \infty$ at the top of $\Gamma_{-\varepsilon, R}^I$, by \eqref{note:onezero_posImline} there is at most one $\tau_*\in(0, \infty)$ where $\Im(\dm^i)$ vanishes. Let us split the path there. On $(\tau_*, \infty)$, it holds by \eqref{note:Feps} that $\Im(\dm^i)>0$, so that the continuous argument stays in the open interval $(0, \pi)$. It starts at $\frac{\pi}{2}$ and ends at a real value for $\dm^i$ at $\tau_*$ with argument 0 or $\pi$. Hence, there is at most a $\frac{\pi}{2}$ argument change in absolute value possible on $(\tau_*, \infty)$. On $(0, \tau_*)$, in turn, it holds by \eqref{note:Feps} that $\Im(\dm^i)<0$, so that the argument stays in an interval of length $\pi$, namely $(\pi, 2\pi)$ (traversed left to right) or $(-\pi, 0)$ (traversed right to left), depending on the branch value of $\mathrm{arg}(\dm^i)$ at $\tau_*$, and ends at the real value $\dm^i(-\varepsilon)$. Thus, the argument change along the upper vertical segment of $\Gamma_{-\varepsilon, R}^I$ with $\Im(\lambda)\geq0$ is at most $\frac{3\pi}{2}$ in absolute value. If $\tau_*$ does not exist, the bound improves to $\frac{\pi}{2}$. The argument change along the lower segment of $\Gamma_{-\varepsilon, R}^I$ with $\Im(\lambda)\leq 0$ contributes the same amount due to $\dm^i(\overline{\lambda}) = \overline{\dm^i(\lambda)}$, again at most $\frac{3\pi}{2}$.

    Adding up all the contributions, 
    \begin{equation*}
        Z^i \leq \frac{1}{2\pi} \left(\pi + \frac{3\pi}{2} + \frac{3\pi}{2} + o(1)\right) = 2 + o(1) \quad \text{as} \quad \varepsilon\to 0 \quad \text{and} \quad R\to \infty.
    \end{equation*}
    Therefore, $Z^i \leq 2$ in $\{\Re(\lambda)\geq 0\}$.

    The same proof structure can be carried out for 
    \begin{equation*}
        \dm^a(\lambda) = \lambda + \alpha - \mu c^a(\lambda) \quad \text{with} \quad c^a(\lambda) = \sqrt{\sigma_*^2+\lambda} \; \coth(\sqrt{\sigma_*^2+\lambda}\frac{L}{2}) = z\coth(z\frac{L}{2}).
    \end{equation*}
    However, the contour $\Gamma_{\varepsilon, R}$ this time with small $+\varepsilon > 0$ needs to be used to exclude the poles $\{\lambda_k\}$ of $\coth(z\frac{L}{2})$ and therefore of $\dm^a$ with 
    \begin{equation*}
        ik\pi = \frac{L}{2}\sqrt{\sigma_*^2+\lambda} \quad \text{for} \quad k\in \bbZ \quad \Leftrightarrow \quad \lambda_k = -\left(\frac{\pi}{L} 2k\right)^2 - \sigma_*^2<0 \quad \text{for} \quad k\in\bbN.
    \end{equation*}
    Thus, the result $Z^a \leq 2$ follows in $\{\Re(\lambda)>0\}$ excluding the imaginary line $\{\Re(\lambda) = 0\}$.  
\end{proof}

\begin{remark} \label{rmk:num_stable_sol}
    \begin{enumerate}[label={\arabic*}]
        \item ($\mu\leq 0$ gives $Z^{i|a} = 0$). For $\Re(\lambda)\geq 0$, it holds that $\Re(z^2) = \Re(\sigma_*^2 + \lambda) > 0$ with $c^i(\lambda) = 0$ when $\sigma_*=\lambda=0$, so that by \eqref{eq:ReIm_ci} also $\Re(c^{i|a}(\lambda)) > 0$ hold. Hence, 
        \begin{equation*}
            \mu \leq 0: \quad\Re(\dm^{i|a}) = \Re(\lambda) + \alpha - \mu \Re(c^{i|a}(\lambda)) \geq \alpha > 0,
        \end{equation*}
        and $\dm^{i|a}$ thus have no zero in the closed right half-plane $\{\Re(\lambda)\geq 0\}$. 
        \item (Natural bound $Z^{i|a} \leq 2$ cannot be improved). On the real ray $\{\lambda\in(-\infty, -\sigma_*^2)\}$, the Mittag-Leffler expansions of $c^{i|a}(\lambda)$ give $(c^{i|a})'>0$ and $(c^{i|a})''<0$, so that, for $\mu > 0$, the functions $\dm^{i|a}(\cdot) = \cdot + \alpha - \mu c^{i|a}(\cdot)$ are strictly convex there. A strictly convex function with $\lim_{\lambda\to \infty} \dm^{i|a}(\lambda) = \infty$ for $\lambda\in\bbR$, in turn, has at most two zeros on $(0, \infty)$, and, for suitable parameters for which $\dm^{i|a}(0)>0$ while $\min(\dm^{i|a})<0$ at a real $\lambda > 0$, exactly two positive real zeros occur, giving $Z^{i|a} = 2$. Example parameter sets for the linearized dynamics for which $\dm^i$ and $\dm^a$ each have exactly two roots with $\Re(\lambda)>0$ can be read off directly from the first row of Figure \ref{fig:Relambda_pos_i}.
        \item (Evans functions $\dm^{i|a}$). The functions $\dm^{i|a}$ are the characteristic Evans-type functions of the eigenvalue problem
        $V_0'' = (\sigma_*^2+\lambda)V_0$ of \eqref{eq:pertdiff} with a $\lambda$-dependent interface condition, the in-phase and anti-phase conditions, respectively, at the dynamic boundaries $V_0(0) = V_0^- \;\wedge \;  V_0(L) = V_0^+$ of \eqref{eq:pertbc}. The result of Theorem \ref{thm:num_stable_sol} says that the number of destabilizing eigenvalues, representing growth rates of eigenperturbations, of each $\dm^i$ and $\dm^a$ never exceeds 2.
    \end{enumerate}
\end{remark}

The nonlinearities in the boundary kinetics $\fint$, however, generically bend the steady-state and Hopf branches with respective symmetry, so that Theorem \ref{thm:num_stable_sol} only holds close to onset from the base branch (linearized stability). It would be interesting to study the fully nonlinear competition of the steady-state with the Hopf oscillation sharing its symmetry. It would further remain to analyze whether at a given point in parameter space the non-base symmetric steady-state solution is selected over the asymmetric steady-state solution and the in-phase Hopf oscillation is selected over the anti-phase Hopf oscillation and vice versa, for system \eqref{eq:sys_tocubic} with cubically expanded internal reaction kinetics $\fint$. For this, the magnitude of the real part of the \emph{nonlinear} growth rate (in contrast to the linear growth rate $\lambda$ of $\dm^i(\lambda) = 0$ and $\dm^a(\lambda) = 0$ that was regarded in Theorem \ref{thm:asymptexpansion_stab_steady-states} and of Figures \ref{fig:Relambda_pos_i} and \ref{fig:Relambda_pos_a}) to each coexisting stable solution $u_*+v$, namely $\Re(\lambda_{\text{nl}})>0$, would need to be analyzed. However, I am choosing to not go further along this path for the present study and instead go over to numerical asymptotics and illustrations in the next section. 

\section{Numerical asymptotics and homoclinic limits at finite amplitude} \label{sec:numericalasympt}

In this section, solutions of the boundary-value problem \eqref{eq:sys_tocubic} are explored. A special focus will be devoted to illustrate the results on the existence and location of the in-phase and anti-phase Hopf bifurcations of Theorems \ref{thm:leading-orderHopf}, on the existence and location of the symmetric and asymmetric (here antisymmetric due to the base equilibrium at $u_*\equiv 0$) steady-state bifurcation points of Remark \ref{rmk:steady-state-bif}, on the bifurcation branch shapes and sub- and supercriticality, and on the growth rates $\lambda$ with $\Re(\lambda)>0$ to each such solution in parameter space.

\paragraph{Bifurcation lines and spectra of linearized operators.}
Let us start with taking a look at the spectrum of the linearized boundary-integral function $D\vF$ that combines both the in-phase Hopf Fredholm operator $\cL^i$ of \eqref{eq:Fredholm_operatori} and the anti-phase Hopf Fredholm operator $\cL^a$ of \eqref{eq:Fredholm_operatora} when evaluated at their respective bifurcation points and onset frequencies. Expressions for the bifurcation points $\mu_*^i$ (in-phase Hopf), $\mu_*^a$ (anti-phase Hopf), $\mu_*^{s}$ (symmetric steady-state), and $\mu_*^{as}$ (antisymmetric steady-state) are readily available from Theorem \ref{thm:leading-orderHopf} and Remark \ref{rmk:steady-state-bif} and are plotted for their ranges of validity in Figures \ref{fig:Relambda_pos_i} and \ref{fig:Relambda_pos_a} for a large domain with moderate bulk degradation rate ($L=10$ and $\sigma=0.1$; left column) and with small degradation rate ($L=10$ and $\sigma=0.01$; center column) and for a moderate domain size with moderate degradation rate ($L=1$ and $\sigma=0.1$; right column). One can see that both the in- and anti-phase Hopf bifurcation lines are emerging from the respective symmetric and antisymmetric steady-state bifurcation lines. The reason for that is shown in the plots of $\Re(\lambda)$ for a fixed value for $\mu$ ($\mu = 1.4$ for both left and center column, $\mu = 8.7$ for the right column; each at the location of the yellow dot in the plot in the respective column of the first row) and varying $\alpha$ in the fourth row of Figures \ref{fig:Relambda_pos_i} and \ref{fig:Relambda_pos_a}. Confirmed by Theorem \ref{thm:num_stable_sol}, the Fredholm operators $\cL^i$ and $\cL^a$ both have maximally two eigenvalues $\lambda$ with $\Re(\lambda) > 0$ in the parameter range shown, and hence the corresponding Evans functions $\dm^i$ and $\dm^a$ have maximally two roots with non-negative real part that are precisely those $\lambda$. As $\alpha$ increases, two real linearized growth rates $\lambda$ (of either $\dm^i$ or $\dm^a$) collide and form a complex conjugate pair with increasing imaginary parts and decreasing real parts. Such a collision happens in the complex plane $\bbC$ below the real line if $\mu$ is smaller than the point where respectively the symmetric steady-state bifurcation line meets the in-phase Hopf bifurcation line and the antisymmetric steady-state bifurcation line meets the anti-phase Hopf bifurcation line. Furthermore, the second and third row of both figures show the biggest $\Re(\lambda)$ value if $\Re(\lambda) > 0$ and otherwise show zero at the corresponding point in the $(\alpha, \mu)$-plane.

Decreasing the bulk degradation rate $\sigma$ toward zero at fixed domain size $L$ most prominently amounts to decreasing the slope of the symmetric steady-state bifurcation line toward zero. At $\sigma = 0$, the antisymmetric steady-state bifurcation line vanishes suddenly (see first row of Figures \ref{fig:Relambda_pos_i} and \ref{fig:Relambda_pos_a}). Increasing $L$ for fixed $\sigma >0$ again decreases the slope of the symmetric steady-state bifurcation line toward zero but, more prominently increases the slope of the antisymmetric steady-state bifurcation line and the in-phase and anti-phase Hopf bifurcation lines toward infinity ($90^\circ$). At the same time, the intersection point of the steady-state bifurcation lines with the Hopf bifurcation lines increases in $\mu$-direction and, for the antisymmetric case, also in $\alpha$-direction. Hence, it appears that the base state $u_*\equiv 0$ is linearly unstable for a bigger fraction of the $(\alpha, \mu) \in \bbR_+^2$ parameter plane as $L\to \infty$ for fixed $\sigma>0$. At this point it is important to note that all intuition which Figures \ref{fig:Relambda_pos_i} and \ref{fig:Relambda_pos_a} give only account for linearized stability. Any nonlinear term appearing in $\vF$ where the linearizations $\cL^i$ and $\cL^a$ stem from, likely bends the respective emerging bifurcation branches in a sub- or supercritical way. I will show the shape of the nonlinear bifurcation branches in Figures \ref{fig:in-phase_Hopfbranches} and \ref{fig:anti-phase_Hopfbranches} below.

Regarding the further computational details behind the bifurcation line plots in Figures \ref{fig:Relambda_pos_i} and \ref{fig:Relambda_pos_a}, the numbers $Z^{i|a}$ of eigenvalues $\lambda_{i|a}$ of respectively $\cL^{i|a}$ with $\Re(\lambda)>0$ provided in the legend of the plots in the first row of Figures \ref{fig:Relambda_pos_i} and \ref{fig:Relambda_pos_a} at each scattered point, has been computed numerically with a discretized version of the argument principle of the proof of Theorem \ref{thm:num_stable_sol},
\begin{equation*}
    Z^{i|a} = \lim_{\varepsilon\downarrow 0}\lim_{R\uparrow \infty}\frac{1}{2\pi i} \int_{\Gamma _{\varepsilon, R}}\frac{\partial_\lambda \dm^{i|a}(\lambda; \mu, \sigma)}{\dm^{i|a}(\lambda; \mu, \sigma)} \; d\lambda = \lim_{\varepsilon\downarrow 0}\lim_{R\uparrow \infty} \Delta_{{\Gamma_{\varepsilon, R}}} \mathrm{arg}(\dm^{i}(\lambda; \mu, \sigma_*)
\end{equation*}
using the complex contour of the $\varepsilon$-shifted closed half circle $\Gamma_{\varepsilon, R} := \{z = \varepsilon+Re^{i\varphi} \;|\; \varphi\in [-\pi, \pi]\} \cup \{z = \varepsilon+iR-\tau i2R \;|\; \tau\in(0, 1)\}$ parametrized in counterclockwise direction and the limit $\Gamma = \lim_{\varepsilon\downarrow 0}\lim_{R\uparrow\infty} \Gamma_{\varepsilon, R}$. The heatmaps in the second and third rows of Figures \ref{fig:Relambda_pos_i} and \ref{fig:Relambda_pos_a} where computed by finding the biggest $\Re(\lambda) >0$ using the Newton algorithm on $\dm^i$ and $\dm_a$ in the complex plane for $\lambda$. If there is no such $\Re(\lambda) >0$, the value zero has been plotted. For the finely followed paths of the two roots of either $\dm^i$ or $\dm^a$ shown in the fourth row of respectively Figure \ref{fig:Relambda_pos_i} and Figure \ref{fig:Relambda_pos_a}, again the Newton algorithm has been used directly on $\dm^i$ and $\dm^a$ together with the result of Theorem \ref{thm:num_stable_sol} that there cannot be more than two roots.

\begin{figure}
    \centering
    \begin{subfigure}{0.28\textwidth}
    \includegraphics[width=1\textwidth]{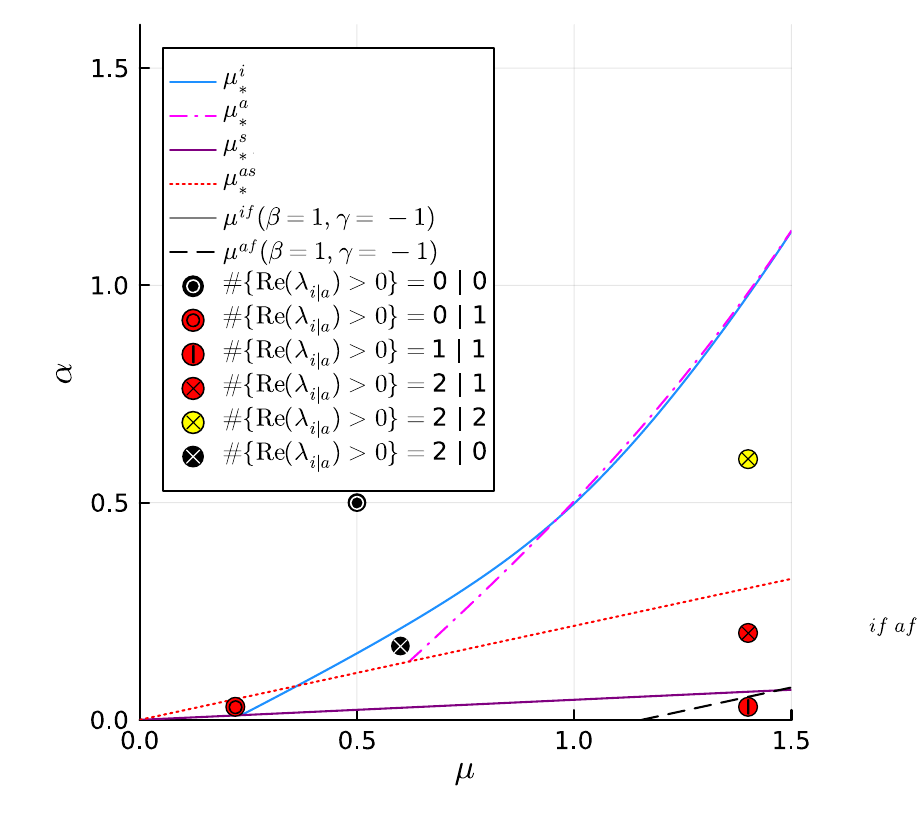}
    \end{subfigure}
    \begin{subfigure}{0.28\textwidth}
    \includegraphics[width=1\textwidth]{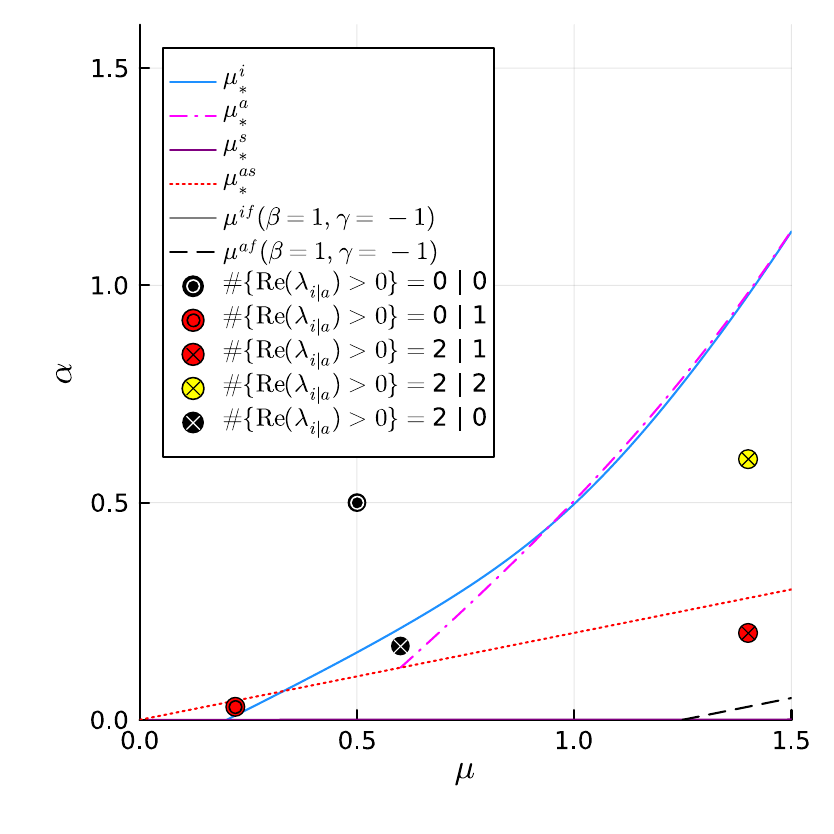}
    \end{subfigure}
    \begin{subfigure}{0.28\textwidth}
    \includegraphics[width=1\textwidth]{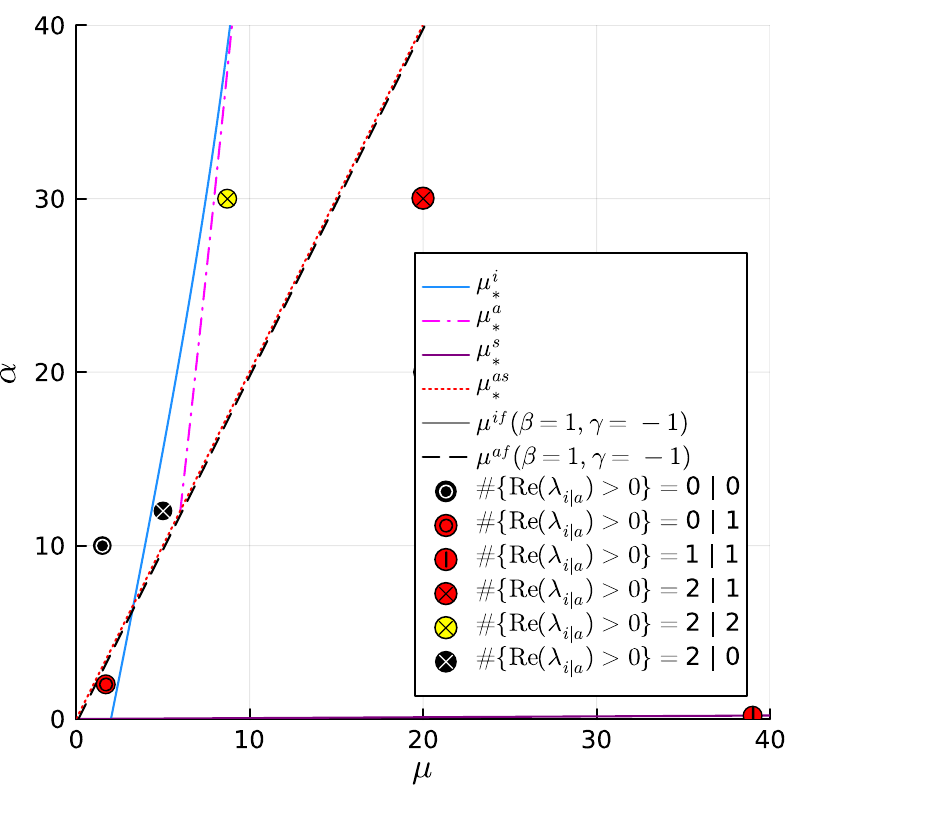}
    \end{subfigure}
    \begin{subfigure}{0.28\textwidth}
    \includegraphics[width=1\textwidth]{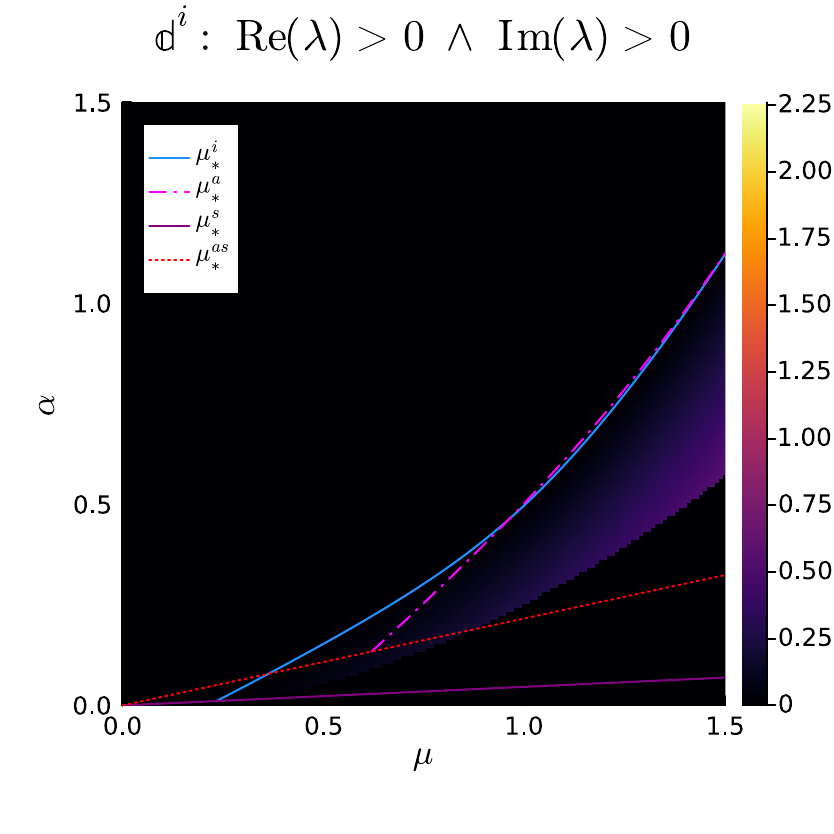}
    \end{subfigure}
    \begin{subfigure}{0.28\textwidth}
    \includegraphics[width=1\textwidth]{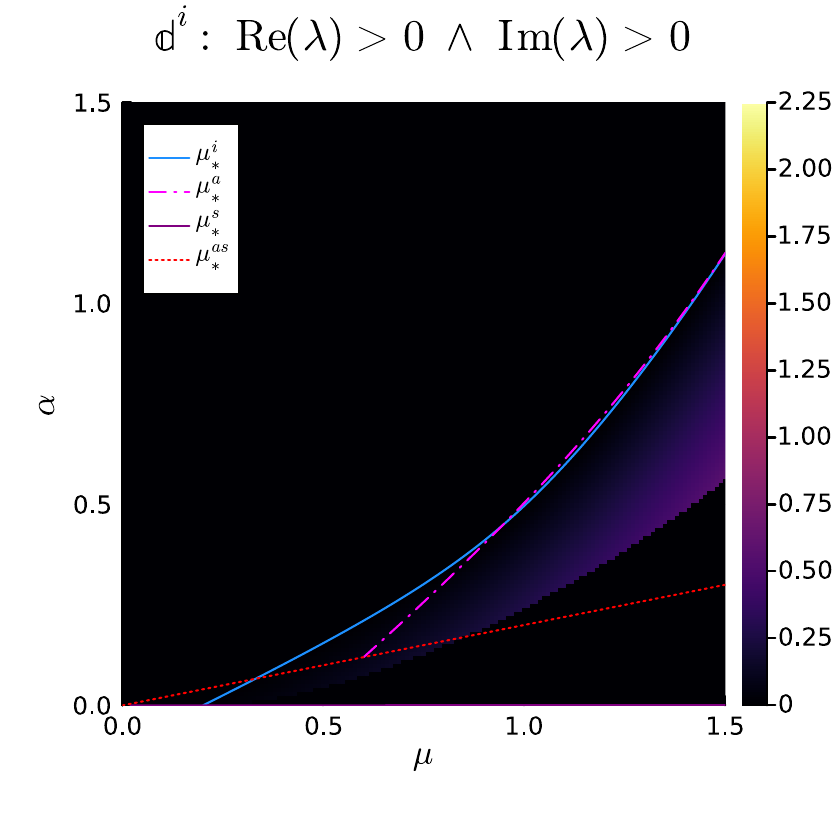}
    \end{subfigure}
    \begin{subfigure}{0.28\textwidth}
    \includegraphics[width=1\textwidth]{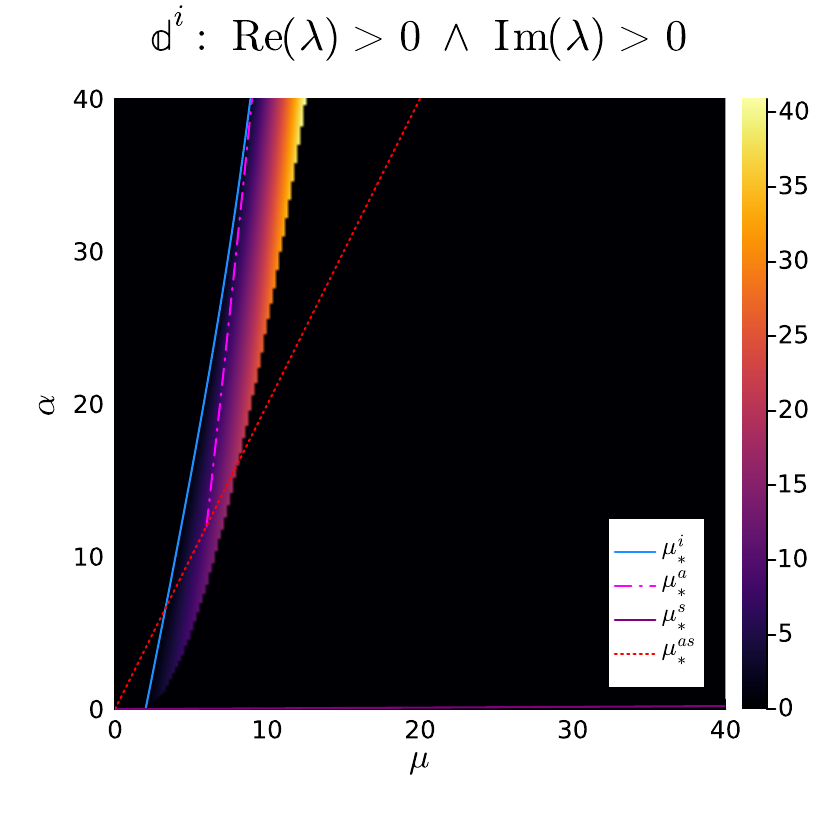}
    \end{subfigure}
    \begin{subfigure}{0.28\textwidth}
    \includegraphics[width=1\textwidth]{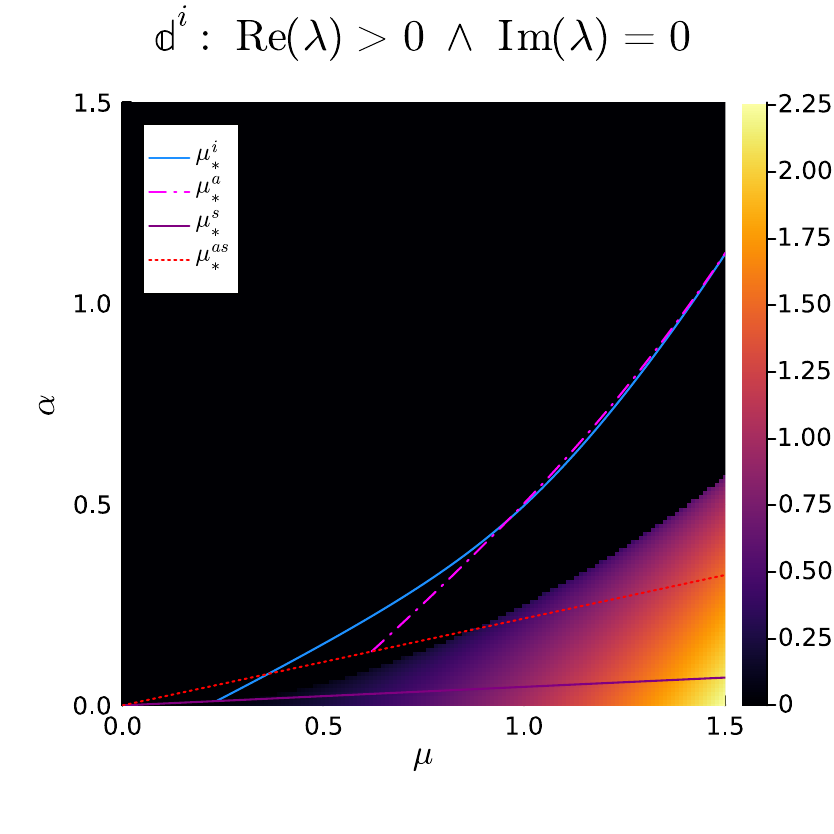}
    \end{subfigure}
    \begin{subfigure}{0.28\textwidth}
    \includegraphics[width=1\textwidth]{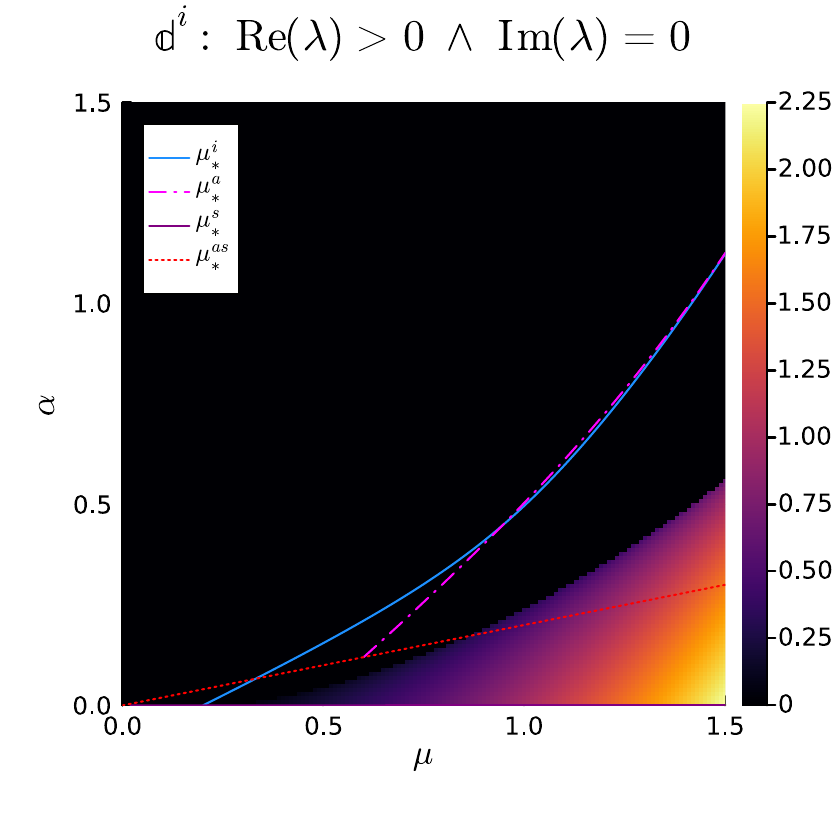}
    \end{subfigure}
    \begin{subfigure}{0.28\textwidth}
    \includegraphics[width=1\textwidth]{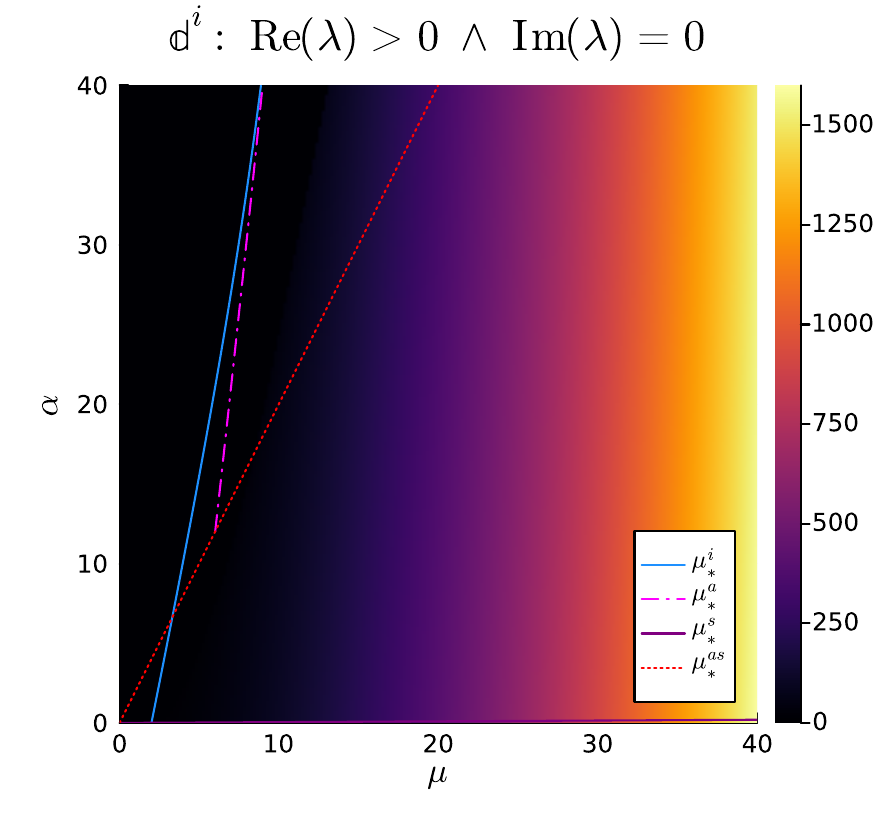}
    \end{subfigure}
    \begin{subfigure}{0.28\textwidth}
    \includegraphics[width=1\textwidth]{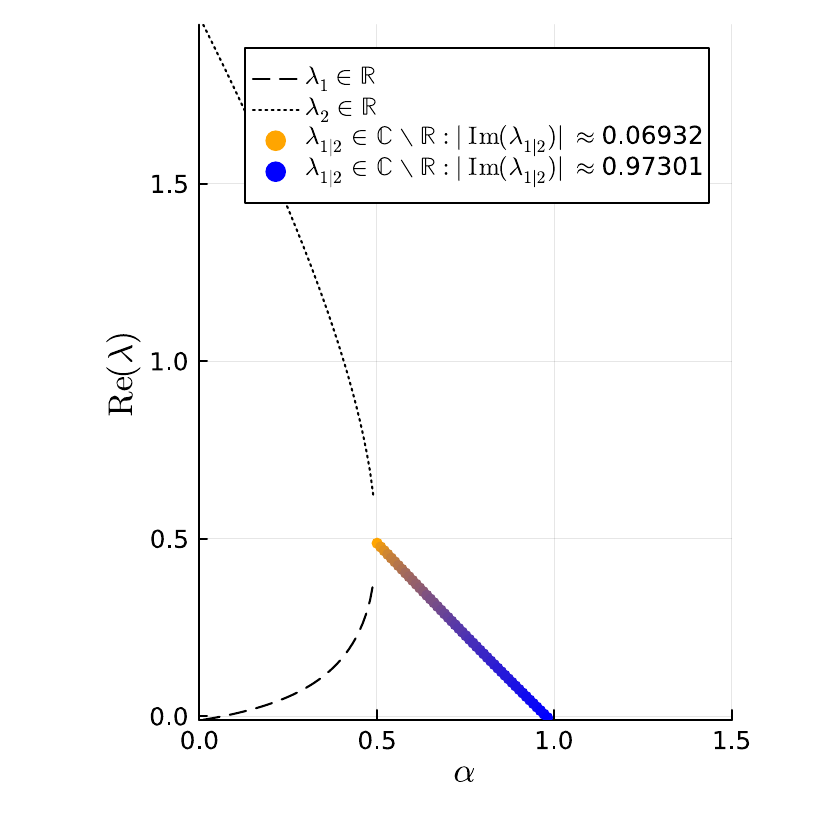}
    \end{subfigure}
    \begin{subfigure}{0.28\textwidth}
    \includegraphics[width=1\textwidth]{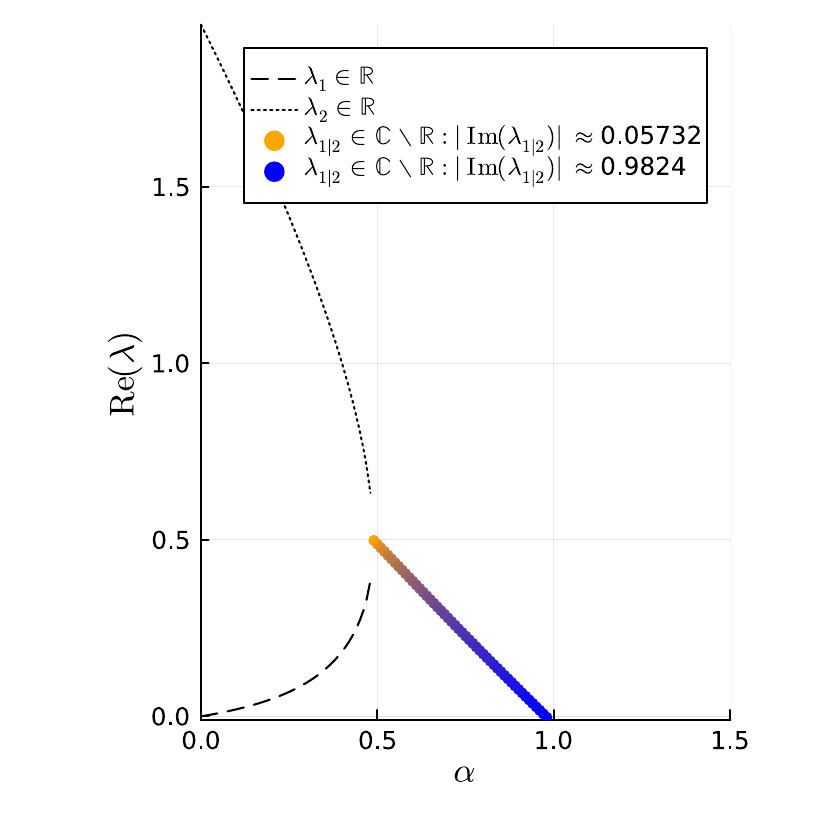}
    \end{subfigure}
    \begin{subfigure}{0.28\textwidth}
    \includegraphics[width=1\textwidth]{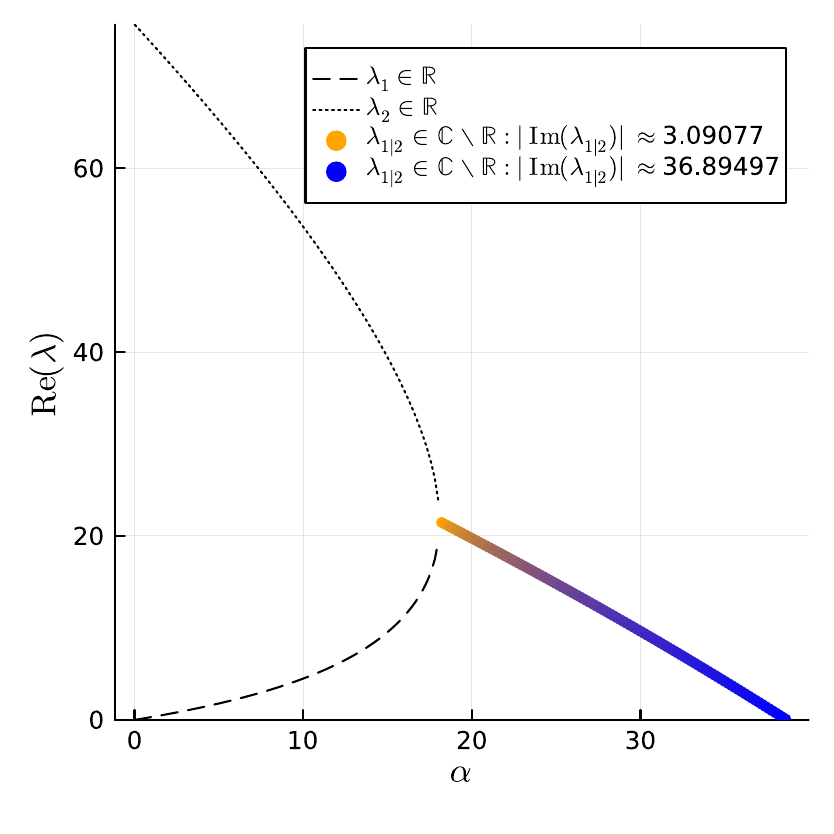}
    \end{subfigure}
    \caption{Steady-state and Hopf bifurcation lines branching off $u_*\equiv 0$, numbers $Z^{i|a} = \#\{\lambda\,|\,\Re(\lambda_{i|a})>0\}$ of the linearizations and Evans functions, $\dm^{i|a}$, respectively, and the maximal $\Re(\lambda) > 0$. Parameter values: large domain size $L=10$ and moderate degradation $\sigma=0.1$ for the left column, large domain size $L=10$ and small degradation $\sigma=0.01$ for the center column, moderate domain size $L=1$ and moderate degradation $\sigma=0.1$ for the right column. First row: in-phase Hopf bifurcation line $\mu_*^i$ and anti-phase Hopf bifurcation line $\mu_*^a$ of Theorem \ref{thm:leading-orderHopf}, symmetric steady-state bifurcation line $\mu_*^s$ and antisymmetric steady-state bifurcation line $\mu_*^{as}$ of Remark \ref{rmk:steady-state-bif}, and examples of \emph{nonlinear} ($\beta=1, \gamma=-1$) symmetric steady-state fold bifurcation line $\mu^{if}$ and antisymmetric steady-state fold bifurcation line $\mu^{af}$ of Corollary \ref{cor:fold}. Second row: Maximal $\Re(\lambda)>0$ for in-phase growth rate $\lambda$ with $\dm^i(\lambda) = 0$ and $\Im(\lambda)>0$. Third row: Maximal $\Re(\lambda)>0$ for symmetric steady-state growth rate $\lambda$ with $\dm^i(\lambda) = 0$ and $\Im(\lambda)=0$.}
    \label{fig:Relambda_pos_i}
\end{figure}

\begin{figure}[!ht]
\ContinuedFloat
\centering
\caption[]{Fourth row: varying $\alpha \in (0, 1.5]$ ($\alpha \in (0, 38]$ for right column) while fixing $\mu=1.4$ ($\mu=8.7$ for right column) in a vertical slice of the first row's bifurcation line plot, one can observe the transition to linearized growing in-phase oscillatory behavior where the two symmetric steady-state growth rates $\lambda_i\in\bbR$ merge to create two complex-conjugate in-phase Hopf growth rates $\lambda_i\in\bbR$. The magnitude of $|\Im(\lambda)|$ can also be read off approximately through the color-coding, and the in-phase Hopf bifurcation point can be observed.}
\end{figure}

\begin{figure}
    \centering
    \begin{subfigure}{0.28\textwidth}
    \includegraphics[width=1\textwidth]{figures/mustarplot_L10.0_sig0.1.pdf}
    \end{subfigure}
    \begin{subfigure}{0.28\textwidth}
    \includegraphics[width=1\textwidth]{figures/mustarplot_L10.0_sig0.01.pdf}
    \end{subfigure}
    \begin{subfigure}{0.28\textwidth}
    \includegraphics[width=1\textwidth]{figures/mustarplot_L1.0_sig0.1.pdf}
    \end{subfigure}
    \begin{subfigure}{0.28\textwidth}
    \includegraphics[width=1\textwidth]{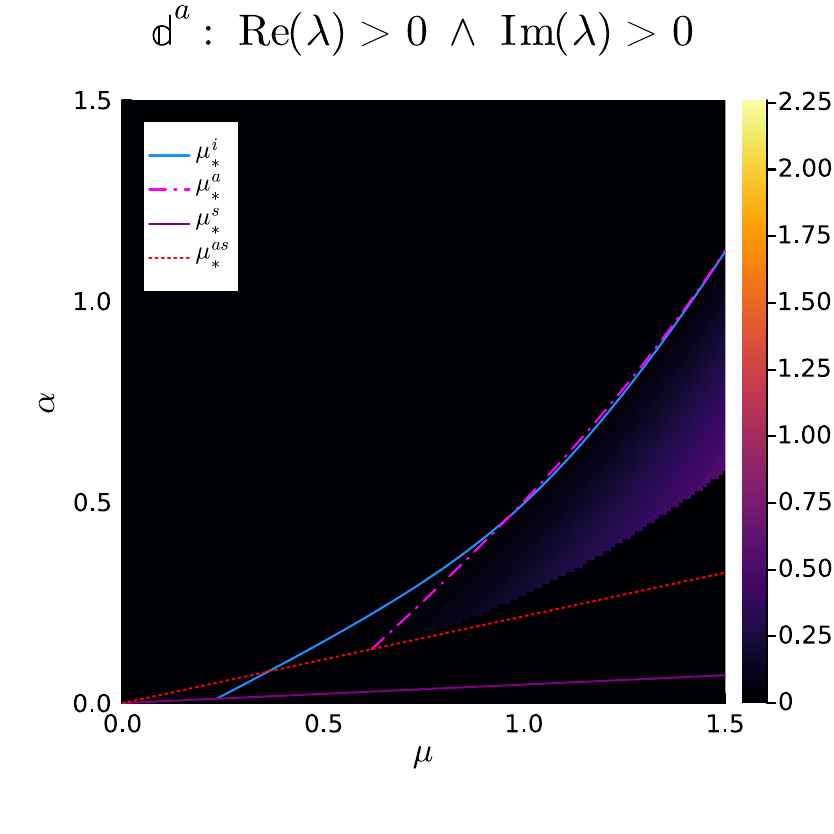}
    \end{subfigure}
    \begin{subfigure}{0.28\textwidth}
    \includegraphics[width=1\textwidth]{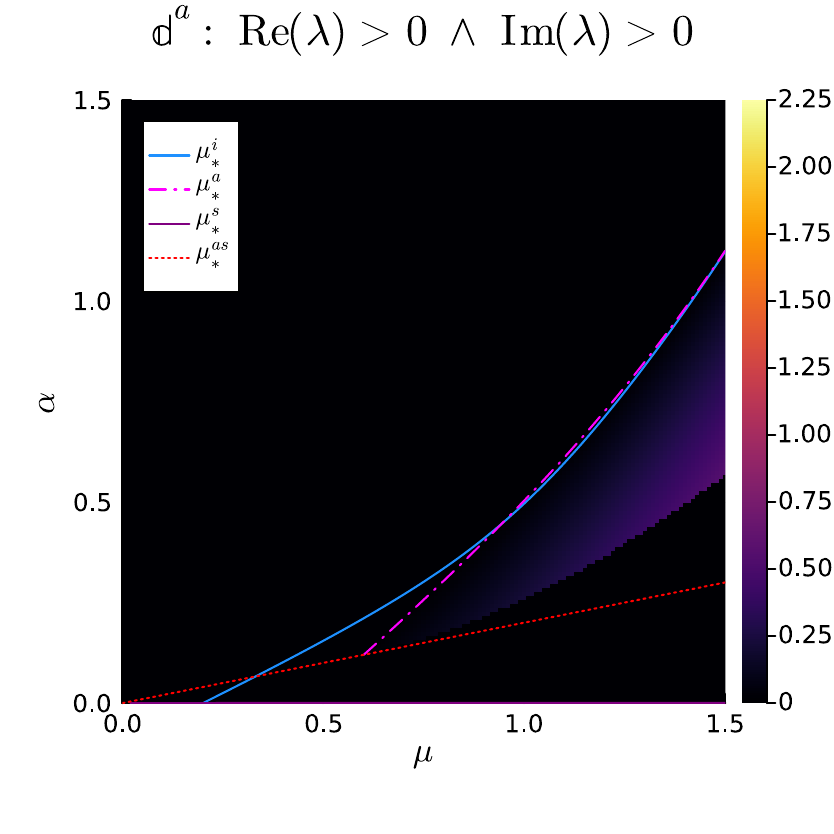}
    \end{subfigure}
    \begin{subfigure}{0.28\textwidth}
    \includegraphics[width=1\textwidth]{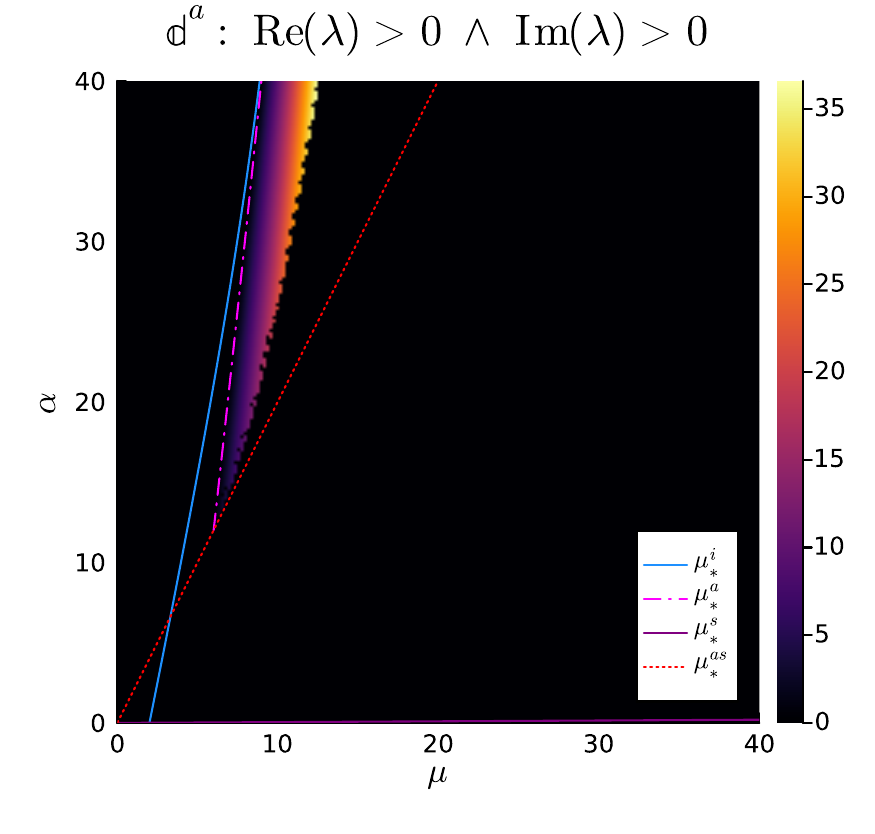}
    \end{subfigure}
    \begin{subfigure}{0.28\textwidth}
    \includegraphics[width=1\textwidth]{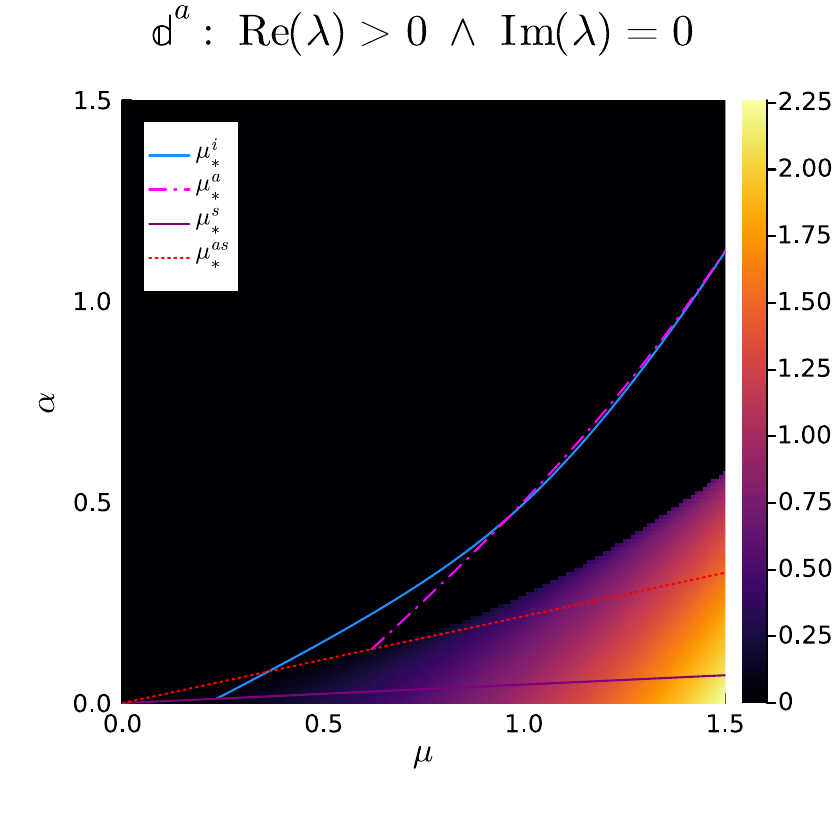}
    \end{subfigure}
    \begin{subfigure}{0.28\textwidth}
    \includegraphics[width=1\textwidth]{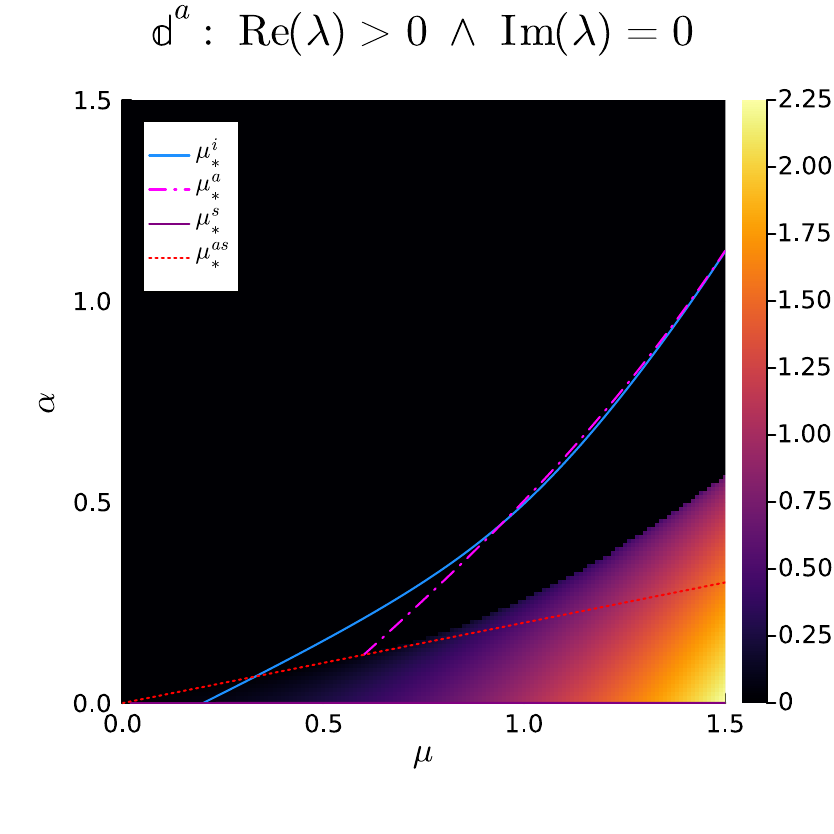}
    \end{subfigure}
    \begin{subfigure}{0.28\textwidth}
    \includegraphics[width=1\textwidth]{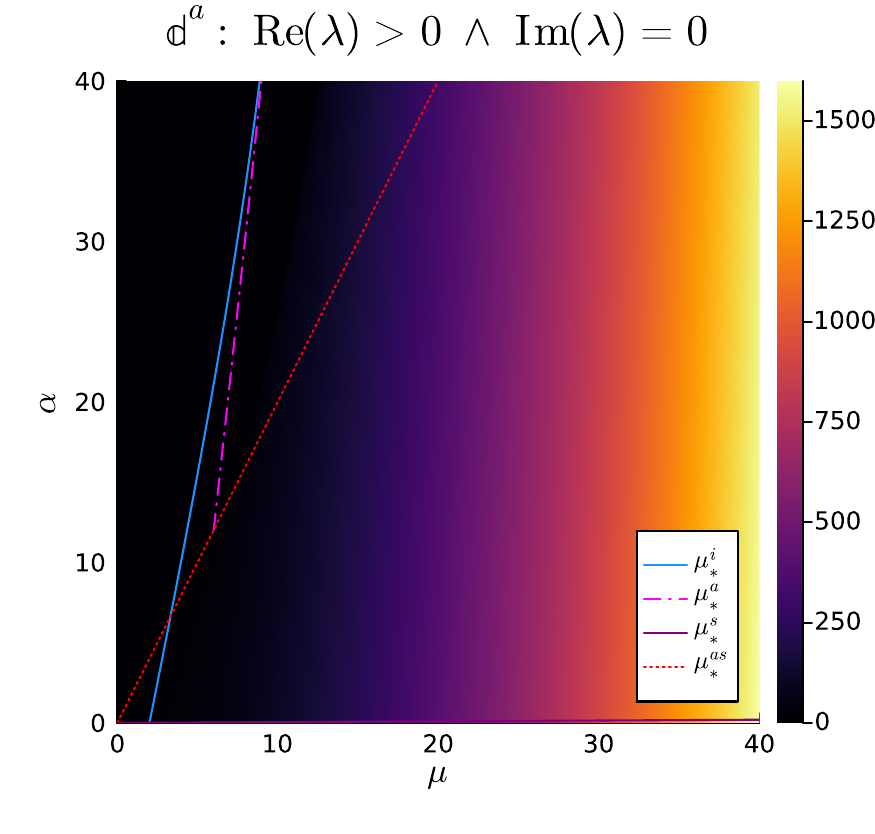}
    \end{subfigure}
    \begin{subfigure}{0.28\textwidth}
    \includegraphics[width=1\textwidth]{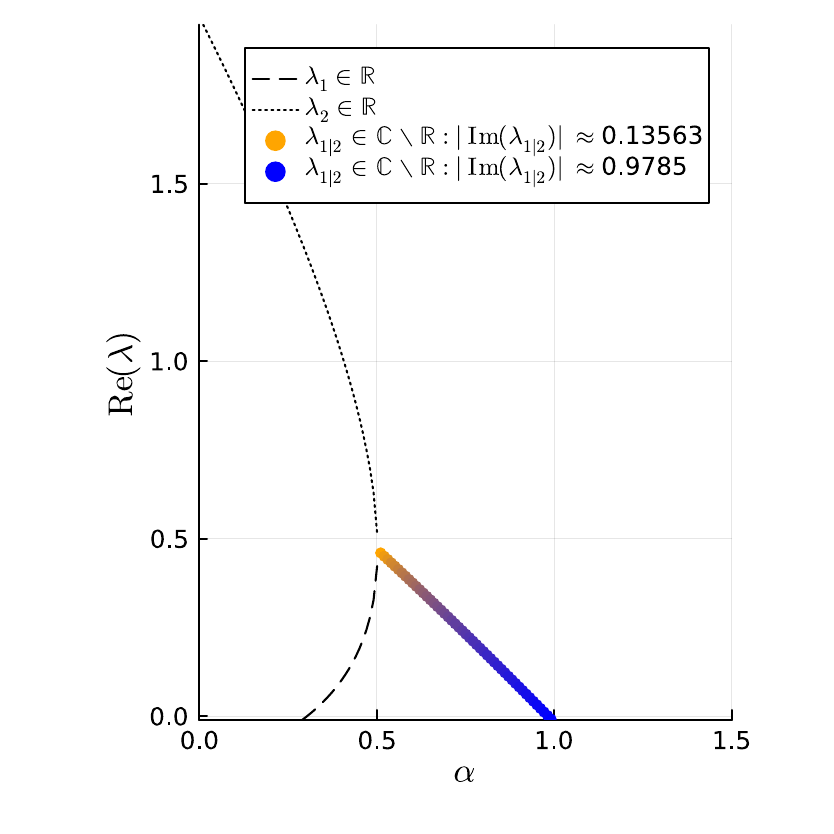}
    \end{subfigure}
    \begin{subfigure}{0.28\textwidth}
    \includegraphics[width=1\textwidth]{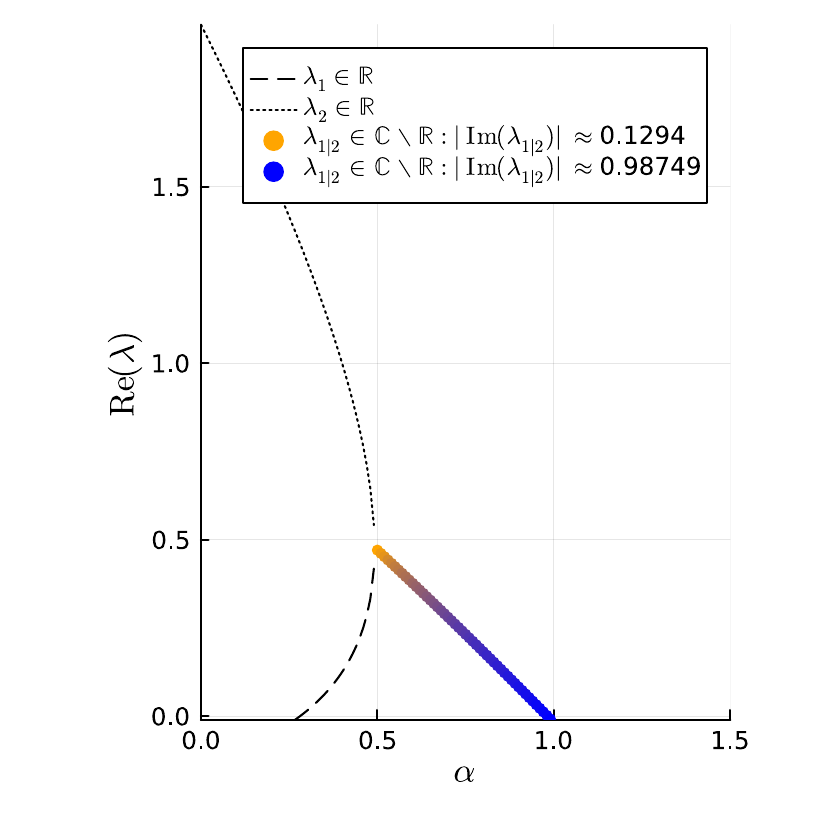}
    \end{subfigure}
    \begin{subfigure}{0.28\textwidth}
    \includegraphics[width=1\textwidth]{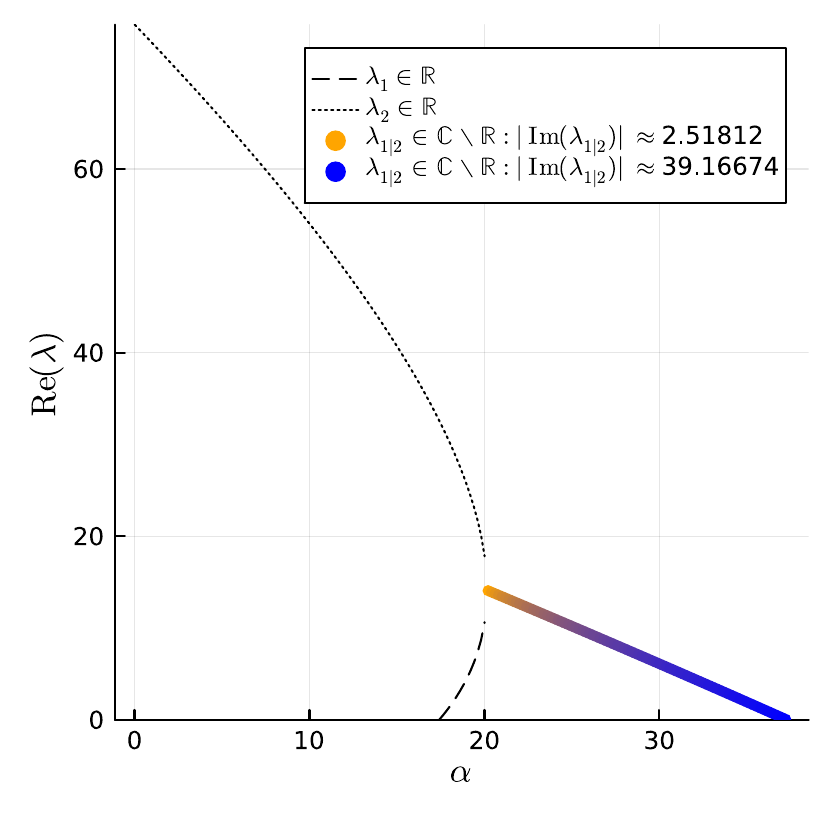}
    \end{subfigure}
    \caption{First row: Same as for Figure \ref{fig:Relambda_pos_i} for illustrative convenience. Second row: Maximal $\Re(\lambda)>0$ for anti-phase growth rate $\lambda$ with $\dm^a(\lambda) = 0$ and $\Im(\lambda)>0$. Third row: Maximal $\Re(\lambda)>0$ for antisymmetric steady-state growth rate $\lambda$ with $\dm^a(\lambda) = 0$ and $\Im(\lambda)=0$. Fourth row: varying $\alpha \in (0, 1.5]$ ($\alpha \in (0, 38]$ for right column) while fixing $\mu=1.4$ ($\mu=8.7$ for right column) in a vertical slice of the first row's bifurcation line plot, one can observe the transition to linearized growing anti-phase oscillatory behavior where the two antisymmetric steady-state growth rates $\lambda_a\in\bbR$ merge to create two complex-conjugate anti-phase Hopf growth rates $\lambda_a\in\bbR$. The magnitude of $|\Im(\lambda)|$ can also be read off approximately through the color-coding, and the anti-phase Hopf bifurcation can be observed.}
    \label{fig:Relambda_pos_a}
\end{figure}

\paragraph{Bending of bifurcation branches due to nonlinearities.}
Let us now turn to the effect the quadratic and cubic nonlinearities in $\fint$ have on the shape of the bifurcation branches. Figure \ref{fig:critcurve} shows the critical curve $\gamma_\text{crit}$ marking the transition of sub- (initially unstable) to supercriticality (initially stable) of Theorem \ref{thm:stabilityofbranches} for the three domain size $L$ and bulk degradation rate $\sigma$ pairs $(10, 0.1)$ (large domain, moderate degradation rate), $(10, 0.01)$ (large domain, small degradation rate), and $(1, 0.1)$ (moderate domain size, moderate degradation rate). While decreasing $\sigma\downarrow 0$ seems to give supercriticality for general $\gamma < 0$, decreasing the domain size $L\downarrow 0$ seems to narrow the parameter set of supercriticality until it vanishes in the $(\beta, \gamma)$-plane.

\begin{figure}
    \centering
    \begin{subfigure}{0.32\textwidth}
    \includegraphics[width=1\textwidth]{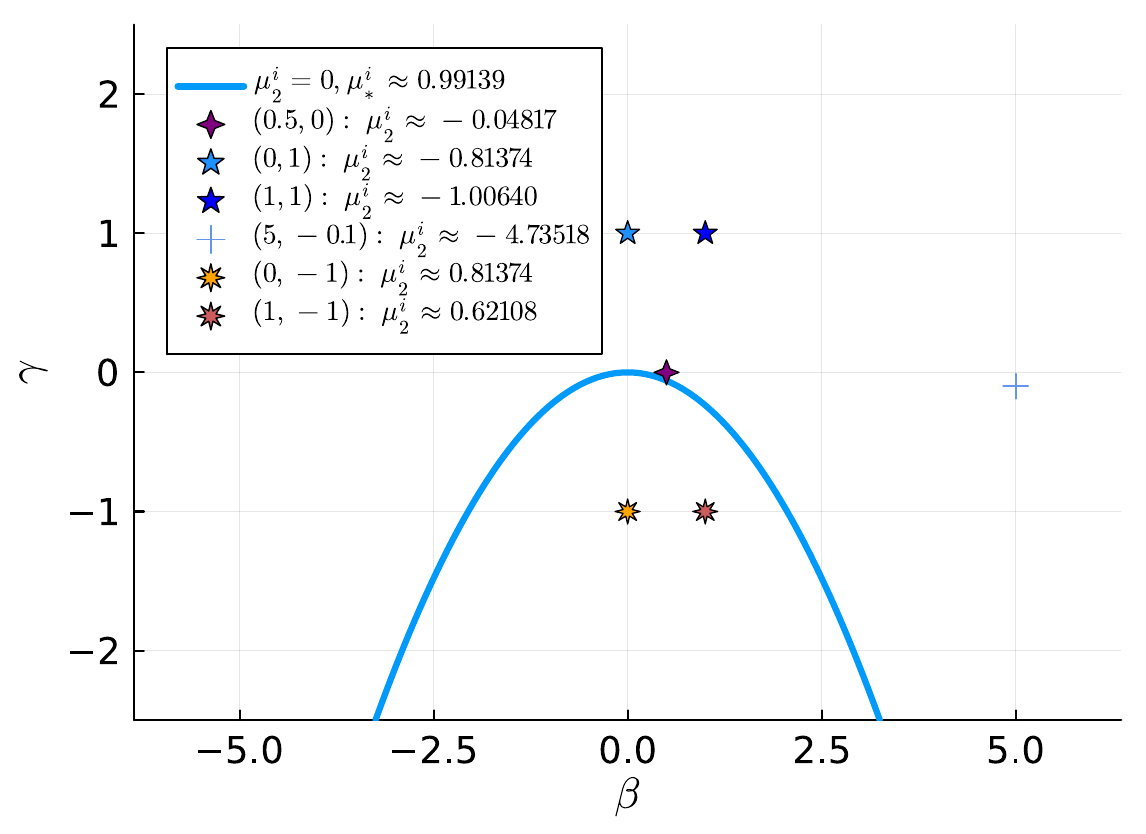}
    \end{subfigure}
    \begin{subfigure}{0.32\textwidth}
    \includegraphics[width=1\textwidth]{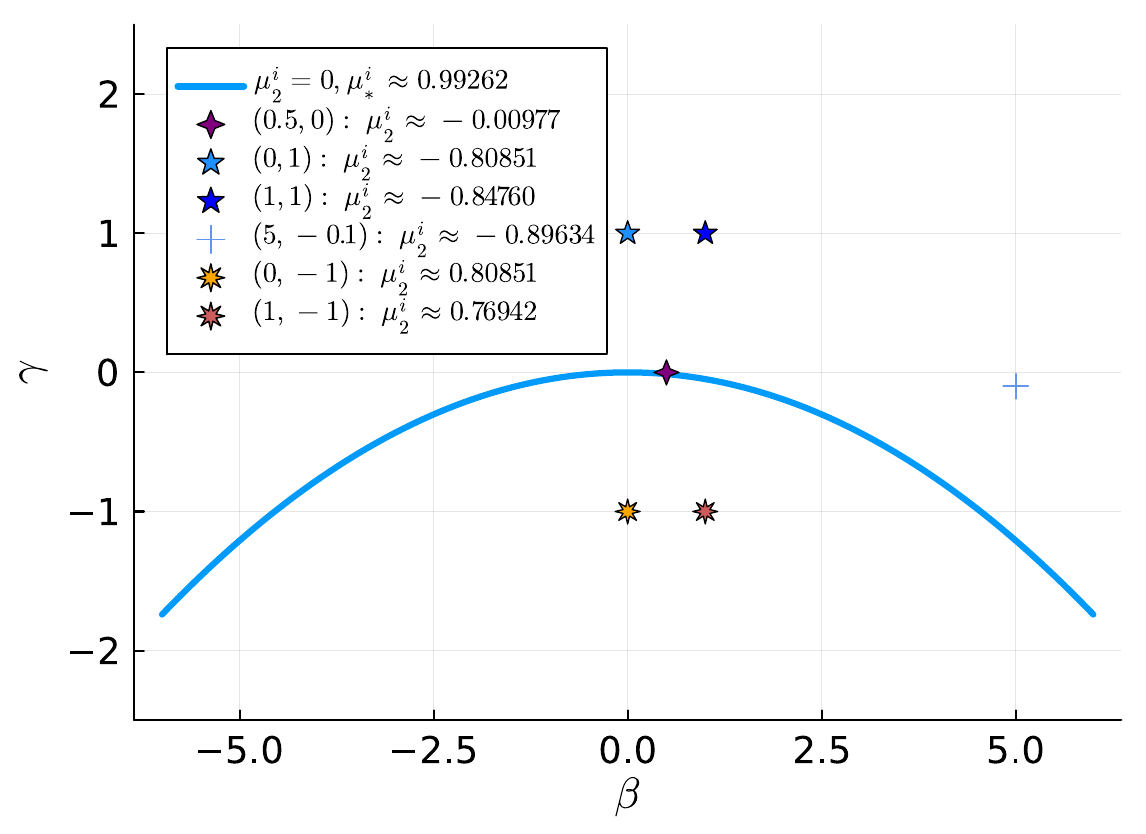}
    \end{subfigure}
    \begin{subfigure}{0.32\textwidth}
    \includegraphics[width=1\textwidth]{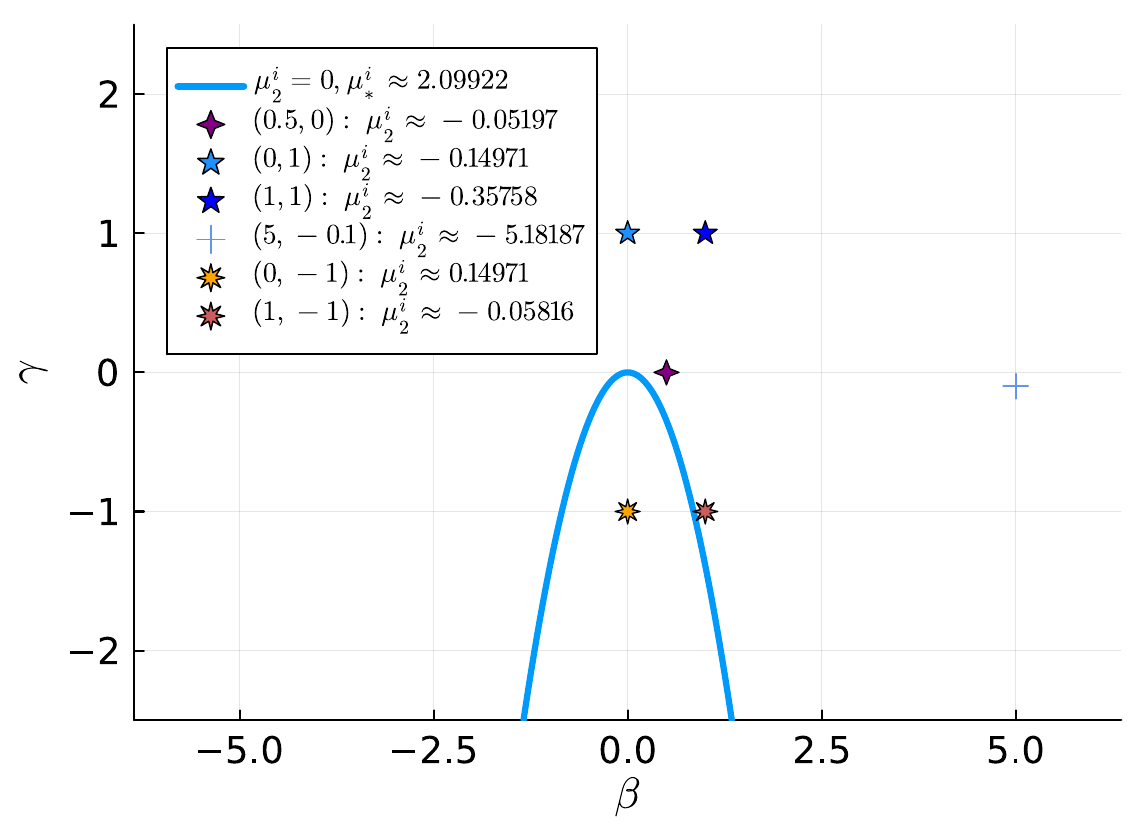}
    \end{subfigure}
    \begin{subfigure}{0.32\textwidth}
    \includegraphics[width=1\textwidth]{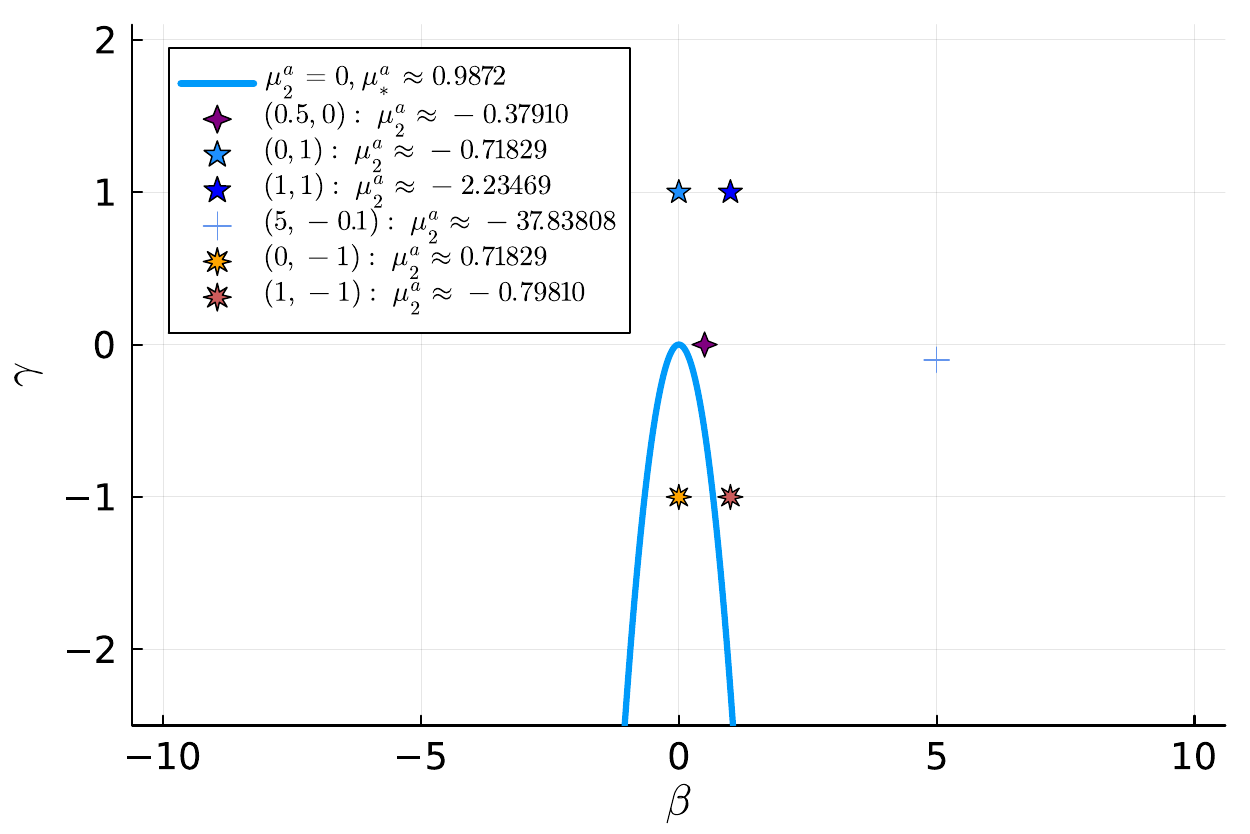}
    \end{subfigure}
    \begin{subfigure}{0.32\textwidth}
    \includegraphics[width=1\textwidth]{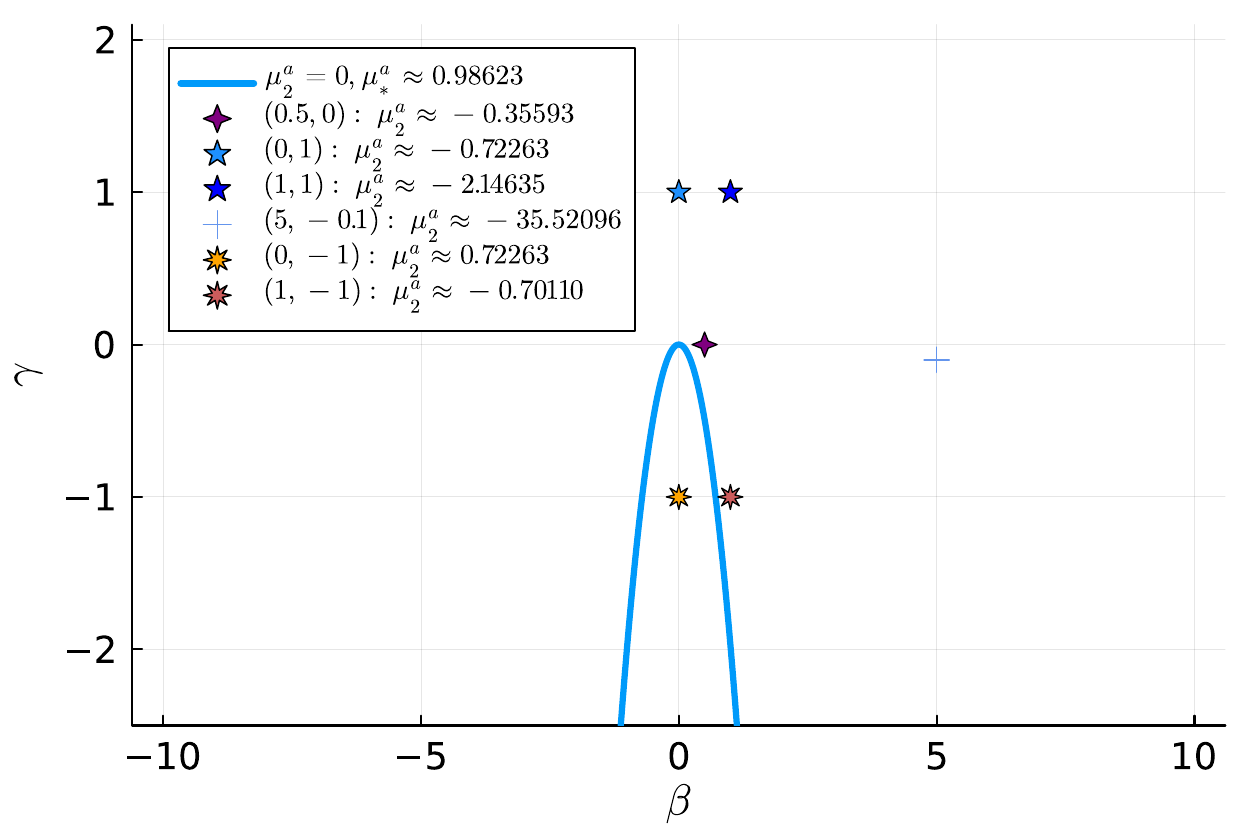}
    \end{subfigure}
    \begin{subfigure}{0.32\textwidth}
    \includegraphics[width=1\textwidth]{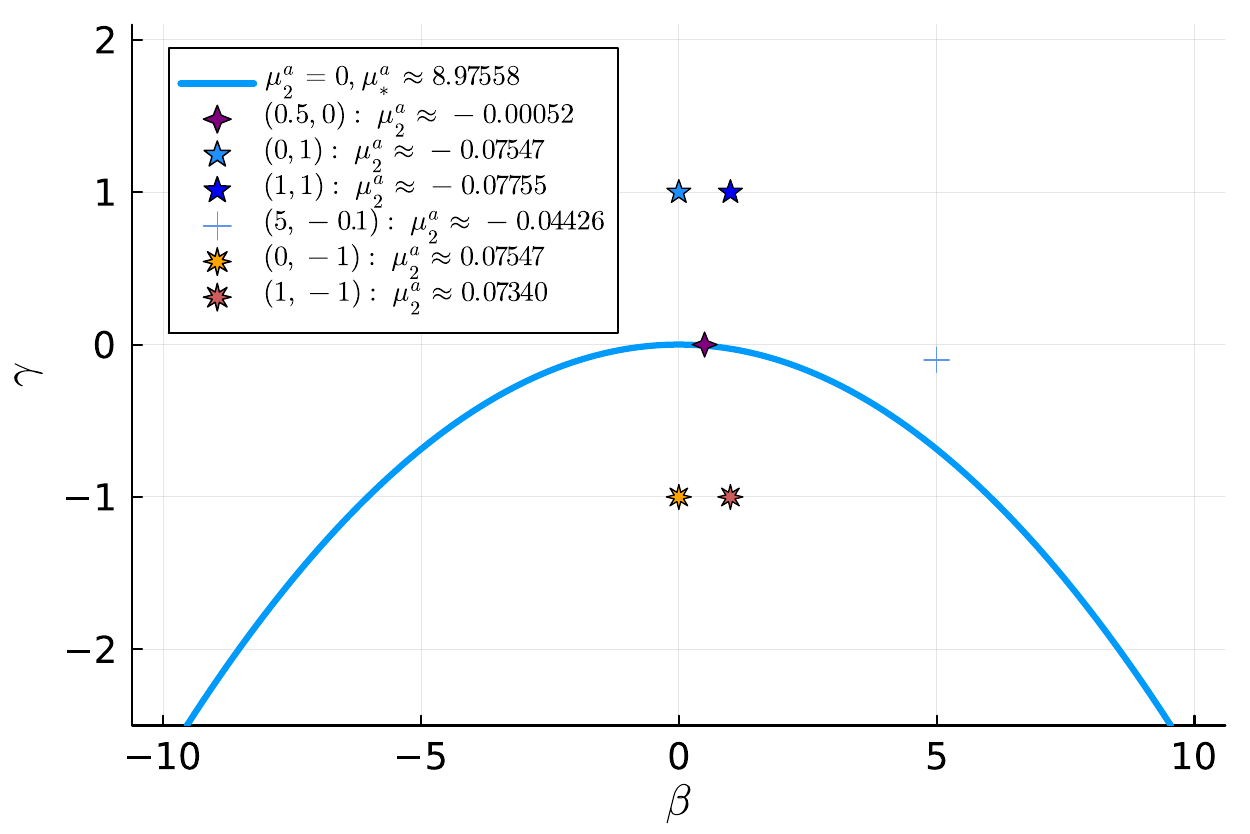}
    \end{subfigure}
    \caption{Critical curve $\gamma_{\text{crit}}$ where the sub-critical branches turn supercritical of Theorem \ref{thm:stabilityofbranches}, all with $\alpha = 1$. Below the quadratic curve, the branches are stable at bifurcation onset (supercritical), whereas they are unstable at bifurcation onset above the curve (subcritical). Left column: large domain and moderate degradation rate with $(L, \sigma) = (10, 0.1)$, center column: large domain and small degradation rate with$(L, \sigma) = (10, 0.01)$, right column: moderate domain size and moderate degradation rate with $(L, \sigma) = (1, 0.1)$. First row: for in-phase Hopf bifurcation, second row: for symmetric steady-state bifurcation, third row: for anti-phase Hopf bifurcation, fourth row: for asymmetric (for the chosen $\fint$ antisymmetric) steady-state bifurcation. For $\sigma\downarrow 0$, all parameter sets with $\gamma<0$ seem to exhibit supercritical bifurcation branches, and, for $L\downarrow 0$, the parameter range for supercriticality narrows until it vanishes.}
    \label{fig:critcurve}
\end{figure}

For the case of the bulk domain with large $L=10$ and moderate $\sigma=0.1$, the geometry of the bifurcation branches along with the shape of the nonlinear oscillatory or nontrivial steady-state solution far from bifurcation onset can be observed in Figure \ref{fig:in-phase_Hopfbranches} for the in-phase Hopf bifurcation branch, at each scattered point of the plots in the first row of Figure \ref{fig:critcurve}. These branches should be connected with the bifurcation line plots in Figure \ref{fig:Relambda_pos_i}, especially as the nonlinear oscillation frequency $\omega_{\text{nl}}^i$ seems to converge to zero for all branches except for the middle one in the bottom row for $(\beta, \gamma) = (0, -1)$ that is supercritical. 
The bifurcation branches for in- and anti-phase oscillations are highly similar at each respective point in the parameter plane. The final oscillation profiles, however, have a distinct swing for $\beta\neq 0$ at the end of the anti-phase Hopf branches that is not there at the end of the in-phase Hopf branches.

\begin{figure}
    \centering
    \begin{subfigure}{0.32\textwidth}
    \includegraphics[width=1\textwidth]{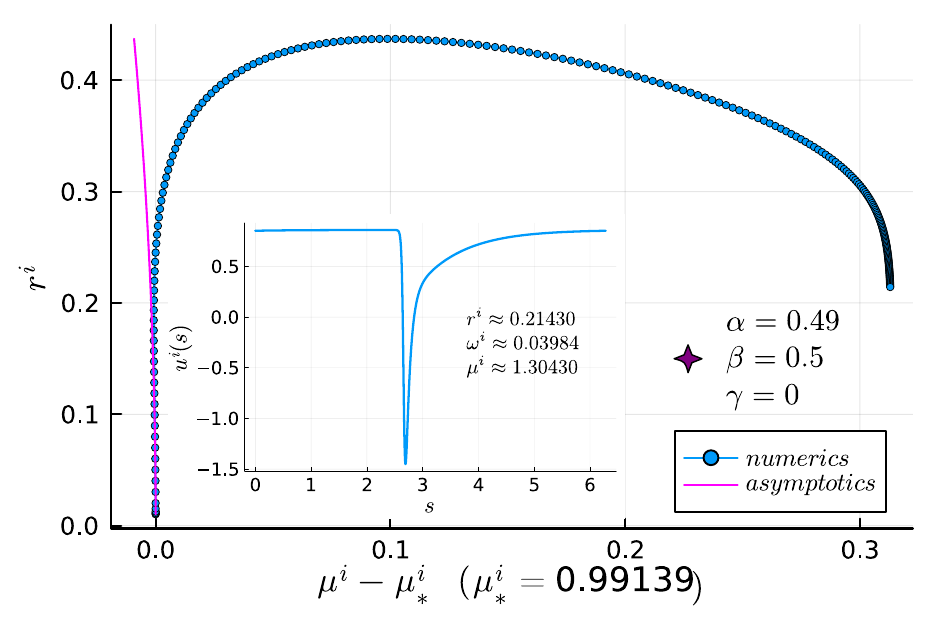}
    \end{subfigure}
    \begin{subfigure}{0.32\textwidth}
    \includegraphics[width=1\textwidth]{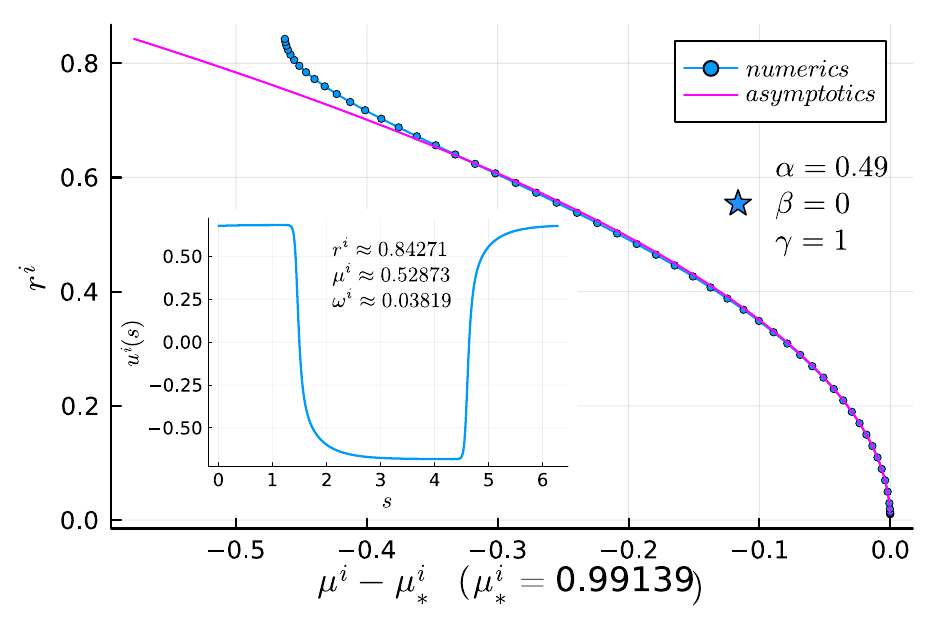}
    \end{subfigure}
    \begin{subfigure}{0.32\textwidth}
    \includegraphics[width=1\textwidth]{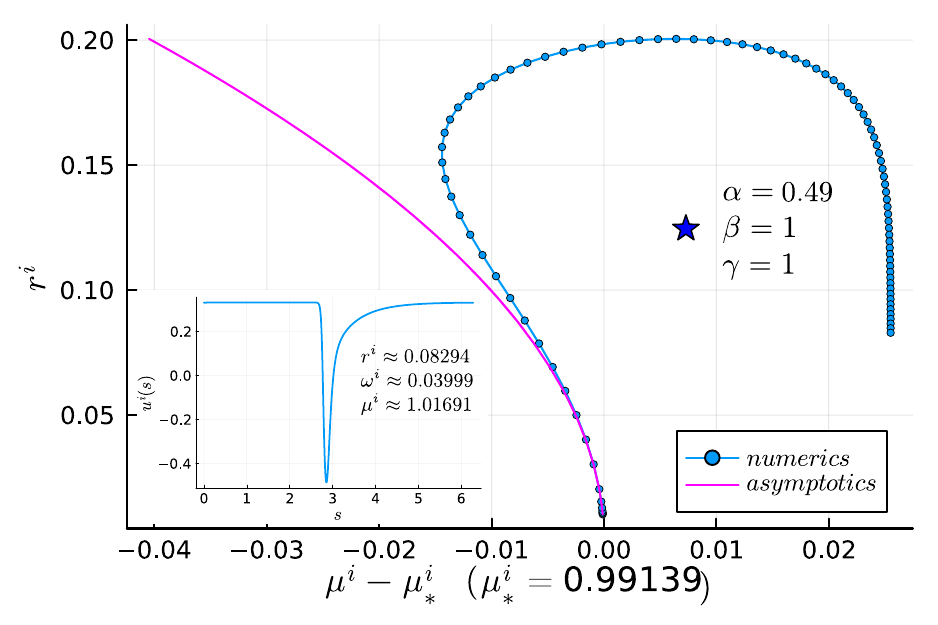}
    \end{subfigure}
    \begin{subfigure}{0.32\textwidth}
    \includegraphics[width=1\textwidth]{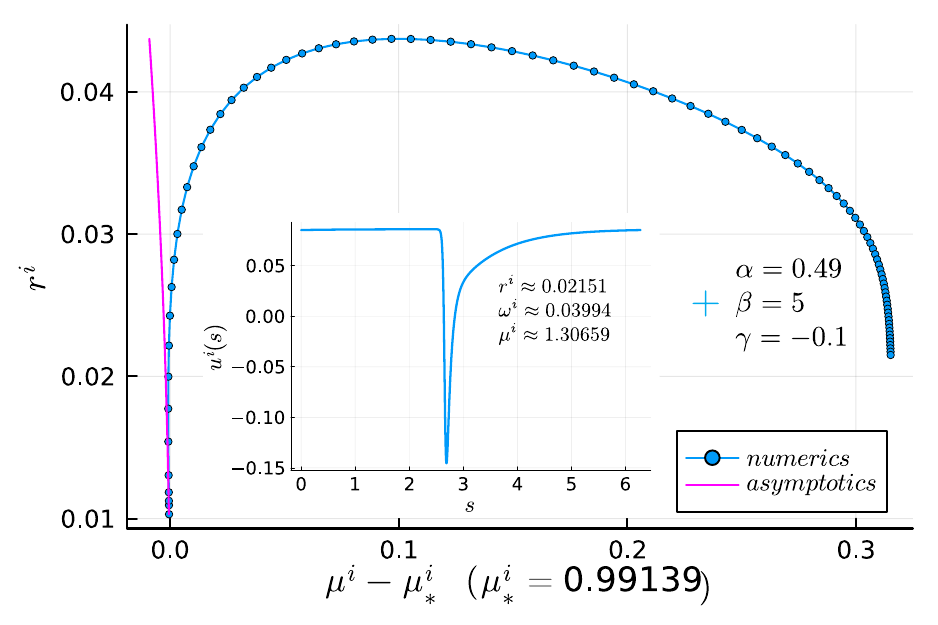}
    \end{subfigure}
    \begin{subfigure}{0.32\textwidth}
    \includegraphics[width=1\textwidth]{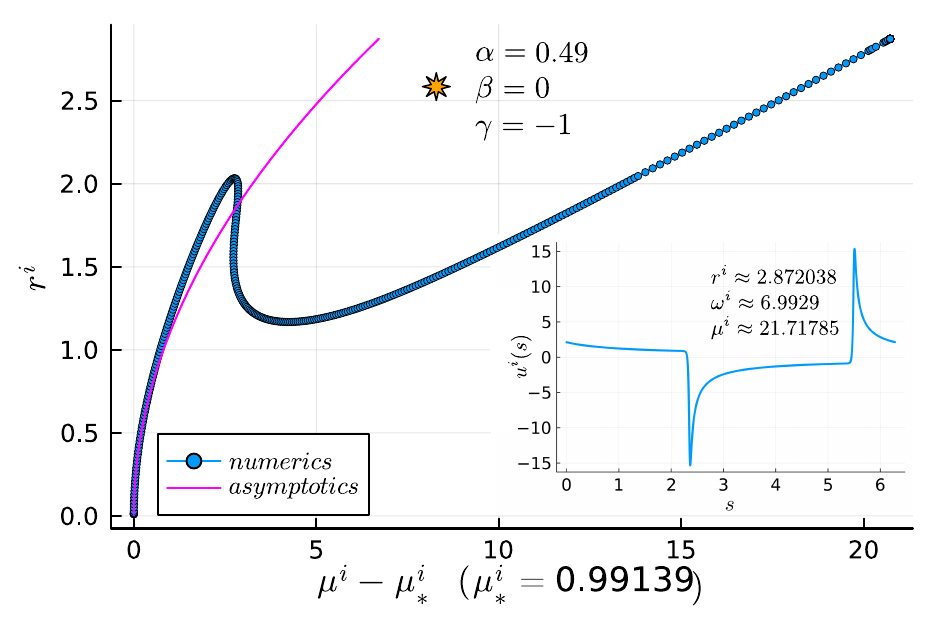}
    \end{subfigure}
    \begin{subfigure}{0.32\textwidth}
    \includegraphics[width=1\textwidth]{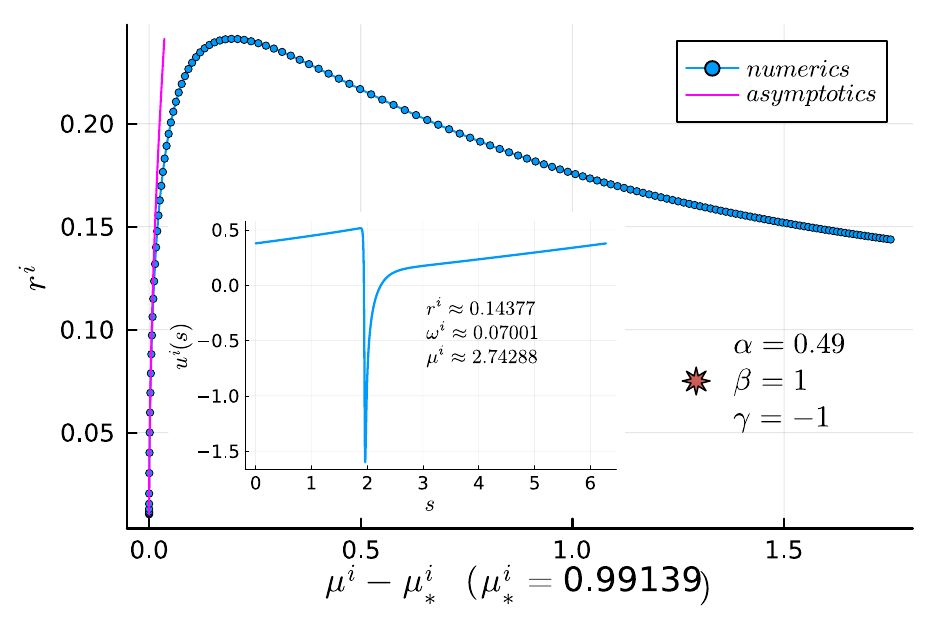}
    \end{subfigure}
    \caption{From left to right and top to bottom, in-phase Hopf bifurcation branches at the points in the $(\beta, \gamma)$-parameter plane of the first row's left plot in Figure \ref{fig:critcurve} in the same order and with $\alpha = 0.49$. Inserted are the nonlinear oscillation shapes that started with a $\cos(s)$ at $\mu^i-\mu_*^i=0$ along with the corresponding amplitude $r^i$, nonlinear oscillation frequency $\omega^i$, and bifurcation parameter $\mu^i$. Note the homoclinic and double heteroclinic limits for all cases except for $(\alpha, \beta, \gamma) = (0.49, 0, -1)$, as $\omega^i$ seems to decrease to 0.}
    \label{fig:in-phase_Hopfbranches}
\end{figure}

Similarly, the anti-phase Hopf bifurcation branches are shown in Figure \ref{fig:anti-phase_Hopfbranches}.

\begin{figure}[!h]
    \centering
    \begin{subfigure}{0.32\textwidth}
    \includegraphics[width=1\textwidth]{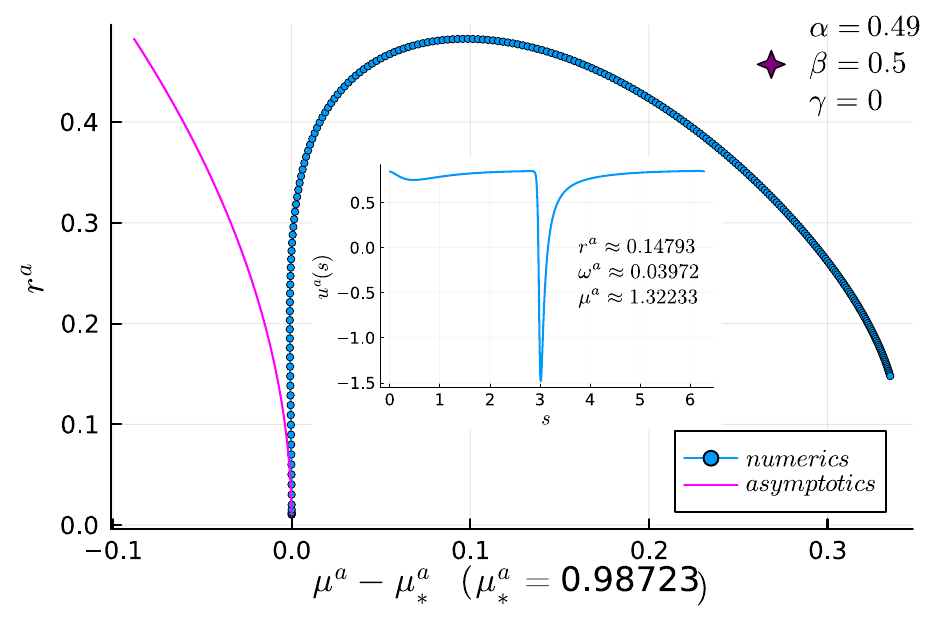}
    \end{subfigure}
    \begin{subfigure}{0.32\textwidth}
    \includegraphics[width=1\textwidth]{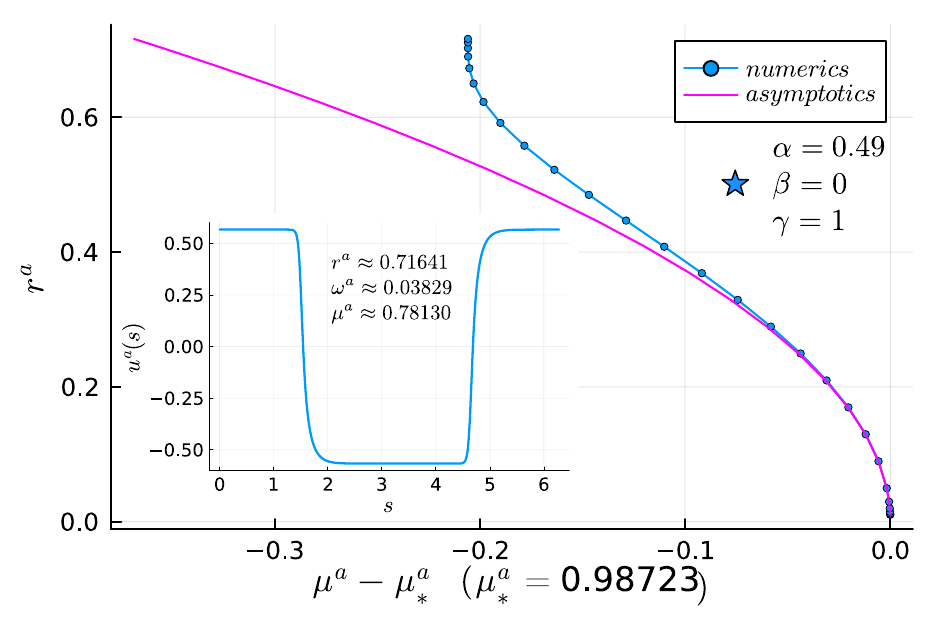}
    \end{subfigure}
    \begin{subfigure}{0.32\textwidth}
    \includegraphics[width=1\textwidth]{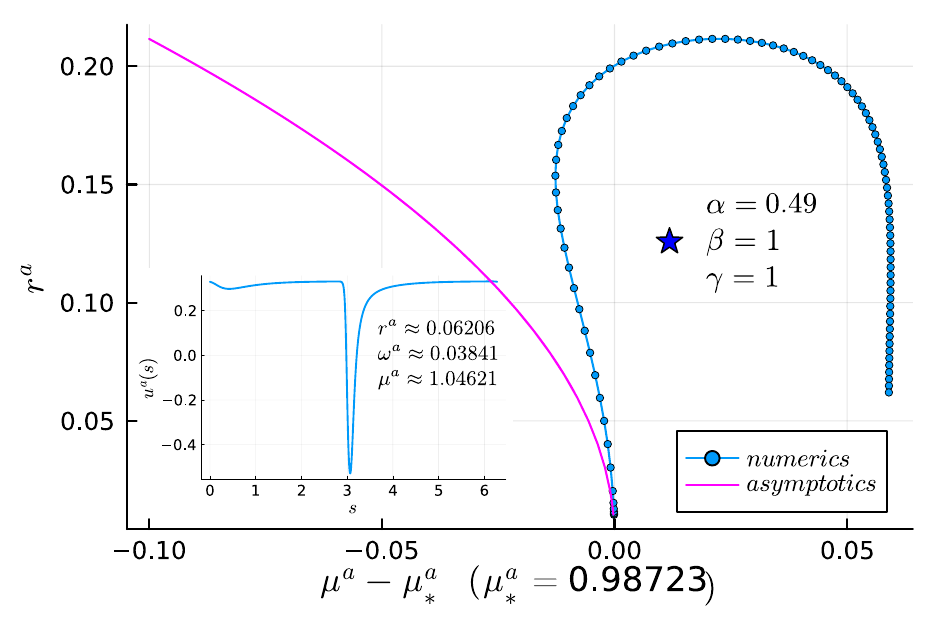}
    \end{subfigure}
    \begin{subfigure}{0.32\textwidth}
    \includegraphics[width=1\textwidth]{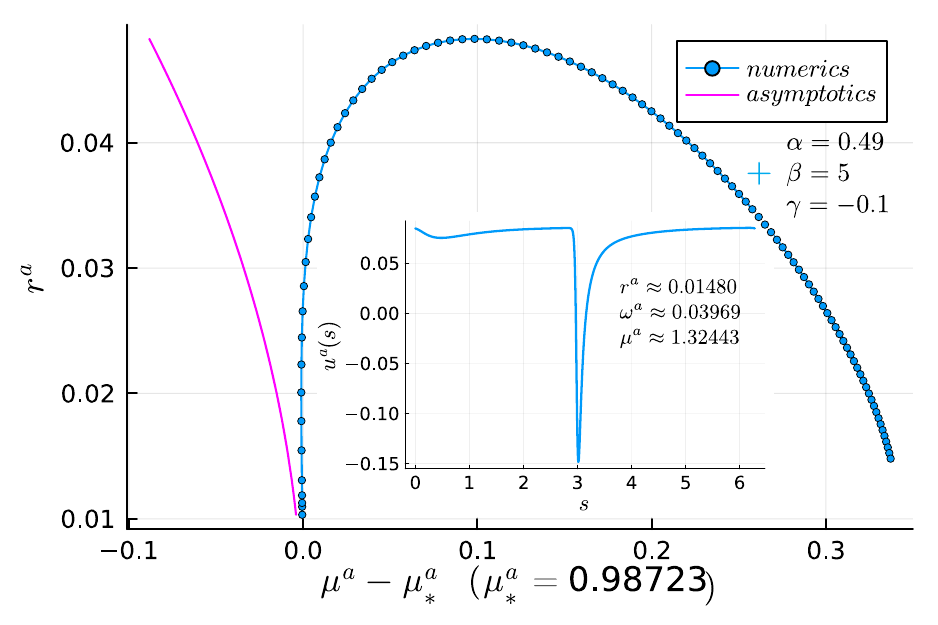}
    \end{subfigure}
    \begin{subfigure}{0.32\textwidth}
    \includegraphics[width=1\textwidth]{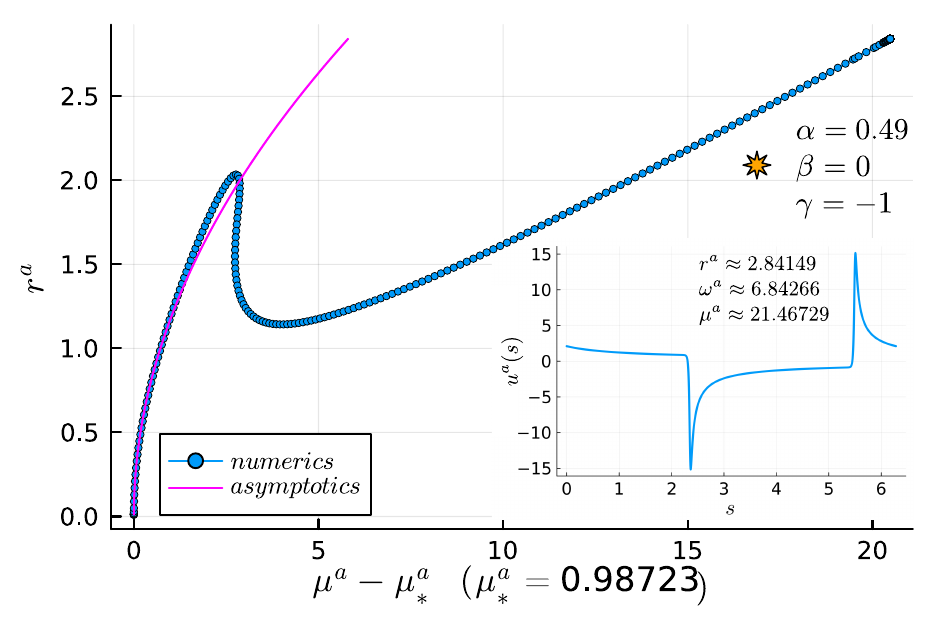}
    \end{subfigure}
    \begin{subfigure}{0.32\textwidth}
    \includegraphics[width=1\textwidth]{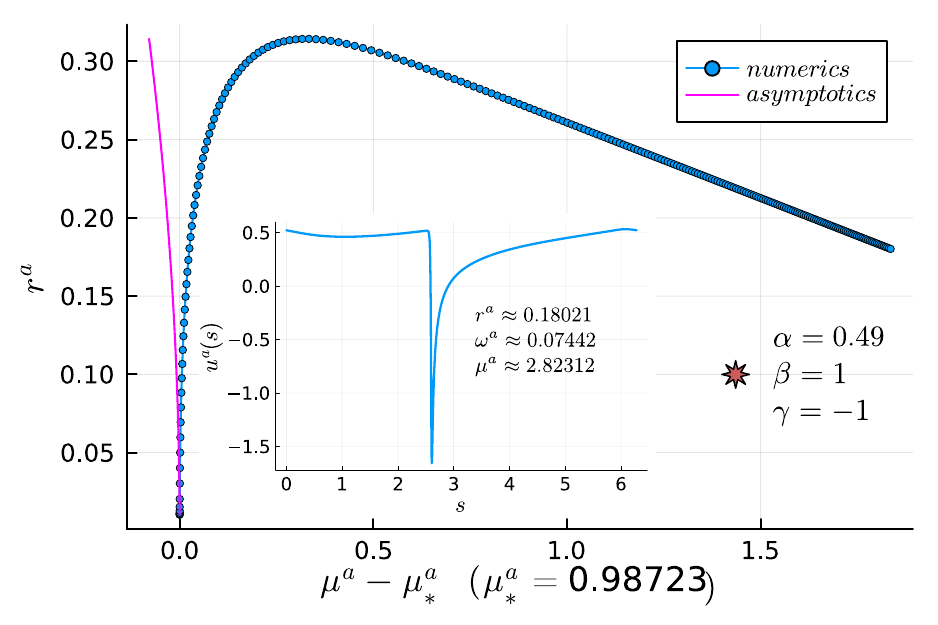}
    \end{subfigure}
    \caption{From left to right and top to bottom, anti-phase Hopf bifurcation branches at the points in the $(\beta, \gamma)$-parameter plane of the first row's left plot in Figure \ref{fig:critcurve} in the same order and with $\alpha = 0.49$. Note that the special point $(\beta, \gamma) = (1, -1)$ is selected, where interesting temporal defects appear in the full numerical solution shown in Figure \ref{fig:timesol_beta1_part2} below. Inserted are the nonlinear oscillation shapes that started with a $\cos(s)$ at $\mu^a-\mu_*^a=0$ along with the corresponding amplitude $r^a$, nonlinear oscillation frequency $\omega^a$, and bifurcation parameter $\mu^a$. Note the homoclinic and double heteroclinic limits for all cases except for $(\alpha, \beta, \gamma) = (0.49, 0, -1)$, as $\omega^a$ seems to decrease to 0.}
    \label{fig:anti-phase_Hopfbranches}
\end{figure}

The seemingly quadratic bifurcation branches for the symmetric and antisymmetric steady-states are exhibited in Figure \ref{fig:steadystatebranches}.
\begin{figure}[!h]
    \centering
    \begin{subfigure}{0.32\textwidth}
    \includegraphics[width=1\textwidth]{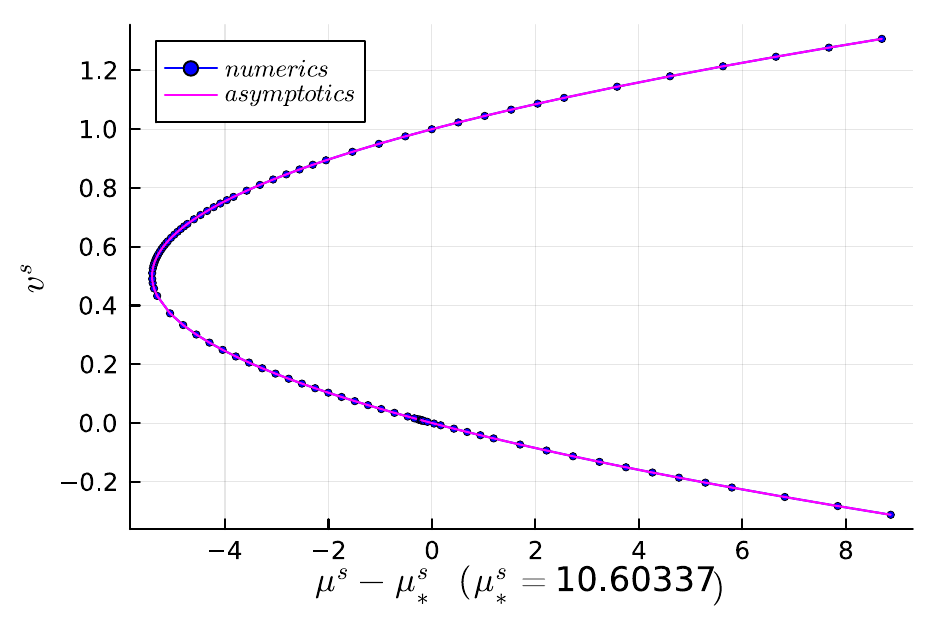}
    \end{subfigure}
    \begin{subfigure}{0.32\textwidth}
    \includegraphics[width=1\textwidth]{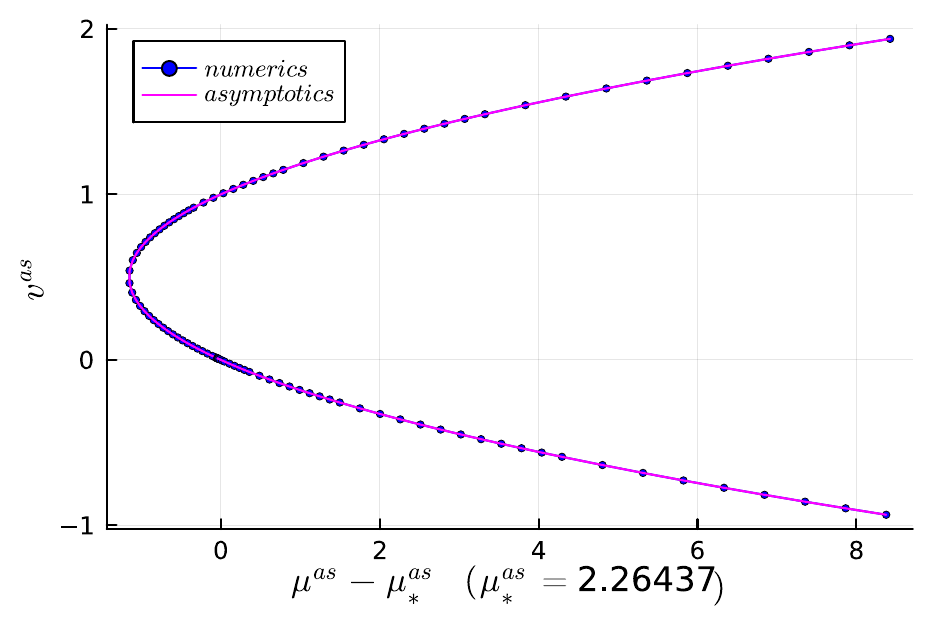}
    \end{subfigure}
    \begin{subfigure}{0.32\textwidth}
    \includegraphics[width=1\textwidth]{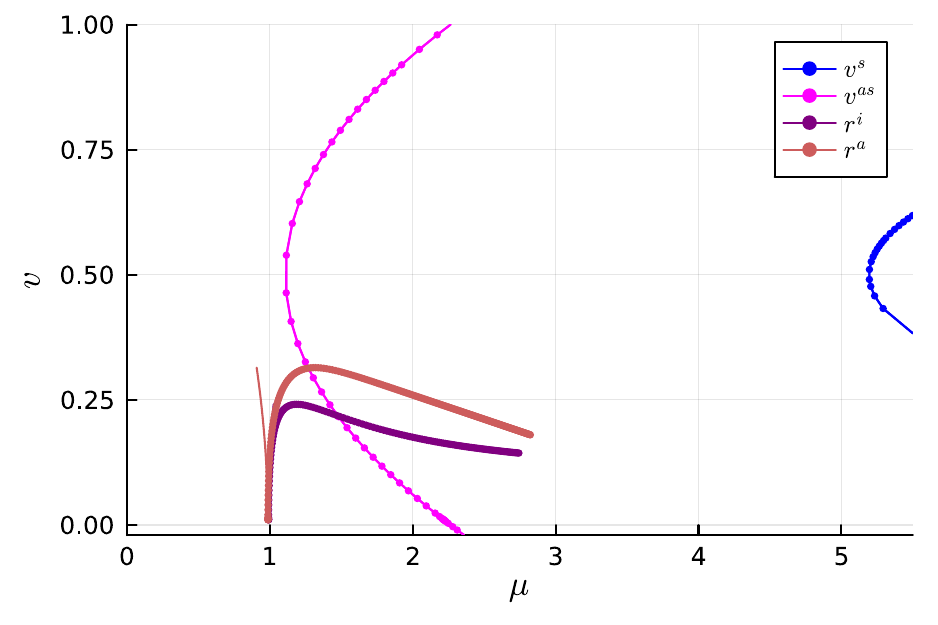}
    \end{subfigure}
    \caption{Left: symmetric steady-state branch, center: antisymmetric steady-state branch, right: all (terminating) Hopf and steady-state branches bifurcating off $u_*\equiv 0$ with $(\alpha, \beta, \gamma, L, \sigma) = (0.49, 1, -1, 10, 0.1)$ since this parameter set exhibits interesting mixing of in-phase and anti-phase Hopf oscillation as shown in Figure \ref{fig:timesol_beta1_part2} below.}
    \label{fig:steadystatebranches}
\end{figure}

\paragraph{Full numerical solutions in space and time.}

The full numerical spatiotemporal solutions have been computed for this study on a uniform time grid with discrete temporal grid points $\{t_j\}_{j=1}^M$ and uniform step size $\Delta t$ with a simple implicit-explicit method similar to the ones from Appendices A and B of \cite{pelz2023emergence}, using the Backward-Euler method for the linear terms and the classical fourth-order Runge-Kutta method for the nonlinear terms. Lumping the values of $u(t, x)$ at the discrete spatial grid points $\{x_k\}_{k=1}^N$ with uniform grid point distance $\Delta x = \frac{L}{N-1}$ into a vector denoted by $\vu(t)\in\bbR^N$, system \eqref{eq:sys_tocubic} can be rewritten as 
\begin{equation*}
    \frac{d}{dt}\vu(t) = \mathbb{L} \vu + (\fint(\vu_1) - \fint'(0)), 0, ..., 0, \fint(\vu_N) - \fint'(0))^T + \cO(\Delta x^3)
\end{equation*}
where the linear operator $\mathbb{L}$ includes a centered second-order discretization of the Laplacian $\partial_{xx} u$ on $\{x_k\}_{k=2}^{N-1}$ (namely $\partial_{xx}|_{x=x_k} u =\frac{1}{\Delta x^2}(\vu_{k+1} - 2 \vu_{k} + \vu_{k-1}) + \cO(\Delta x^3)$), a non-centered second-order discretization of the gradient $\partial_n u$ at $x_1$ (namely $\partial_n|_{x=x_1} u = \frac{1}{\Delta x}(\frac{3}{2}\vu_1 - 2 \vu_{2} + \frac{1}{2}\vu_{3}) + \cO(\Delta x^3)$) and at $x_n$ (namely $\partial_n|_{x=x_1} u =\frac{1}{\Delta x}(\frac{3}{2}\vu_N - 2 \vu_{N-1} + \frac{1}{2}\vu_{N+2}) + \cO(\Delta x^3)$), linear degradation $\{-\sigma^2 \vu_k\}_{k=1}^{N-1}$, and the internal reaction kinetics $\fint$ linearized at the base steady-state $u_*\equiv 0$, namely $\fint'(0)$, at $x=x_1$ and at $x=x_N$.

The spectrum of the linear operator $\mathbb{L}$ shows a good approximation to the combined spectra of the in-phase and anti-phase Hopf Fredholm operators $\cL^i$ and $\cL^a$. The numerical solution to \eqref{eq:sys_tocubic} at each scattered point of Figure \ref{fig:Relambda_pos_i} along with the spectrum of $\mathbb{L}$ is provided in Figures \ref{fig:timesol_beta1_part1} and \ref{fig:timesol_beta1_part2} for $(\alpha, \beta, \gamma) = (0.49, 1, -1)$. It can be seen that the nonlinearities cause the dynamics to deviate from the dynamics the linearized spectrum hints at. I would like to state here that the numerical solution for $(\alpha, \beta, \gamma) = (0.49, 1.53, -1)$ exhibits dynamics close to the linearized dynamics. 

It appears that there are temporal defects in Figure \ref{fig:timesol_beta1_part2}, especially for the parameter choice $(\alpha=0.49, \beta = 1, \gamma = -1)$ corresponding to the bottom-right Hopf branch in Figure \ref{fig:anti-phase_Hopfbranches}. This suggests that the in-phase and anti-phase Hopf oscillation have growth rates $\lambda\in \bbC$ with similar $\Re(\lambda)>0$, so that in-phase and anti-phase oscillations are mixing with each other.

\begin{figure}[]
    \centering
    \begin{subfigure}{0.32\textwidth}
    \includegraphics[width=1\textwidth]{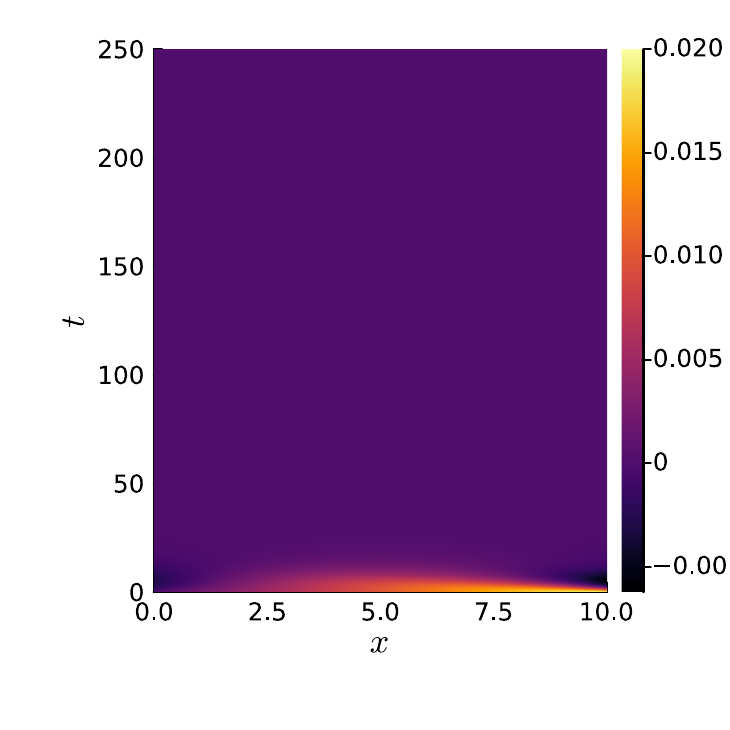}
    \end{subfigure}
    \begin{subfigure}{0.32\textwidth}
    \includegraphics[width=1\textwidth]{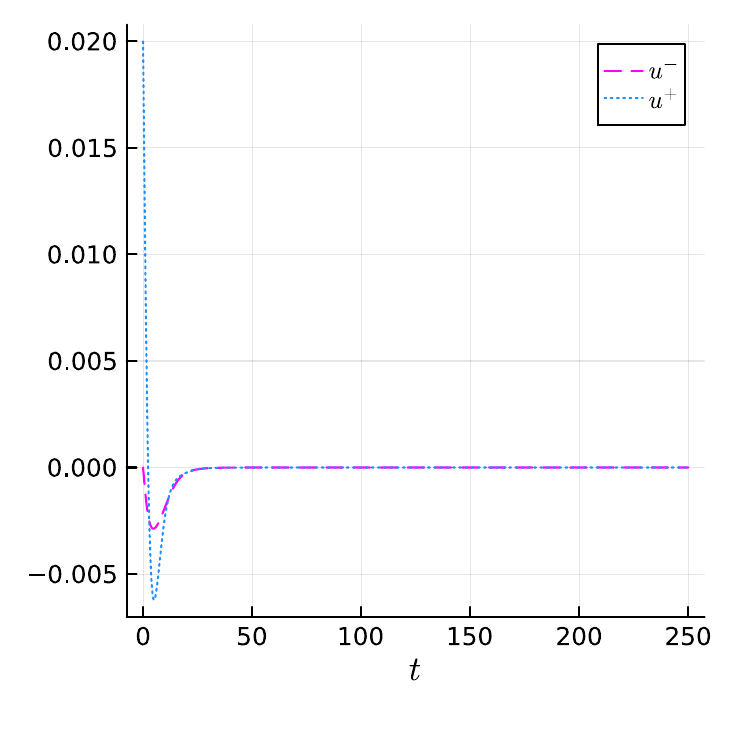}
    \end{subfigure}
    \begin{subfigure}{0.32\textwidth}
    \includegraphics[width=1\textwidth]{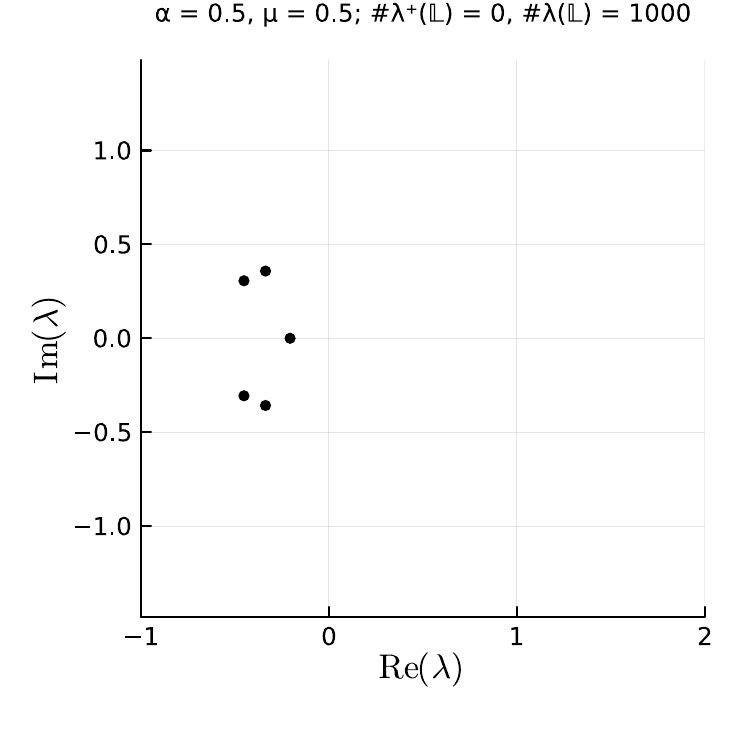}
    \end{subfigure}
    \begin{subfigure}{0.32\textwidth}
    \includegraphics[width=1\textwidth]{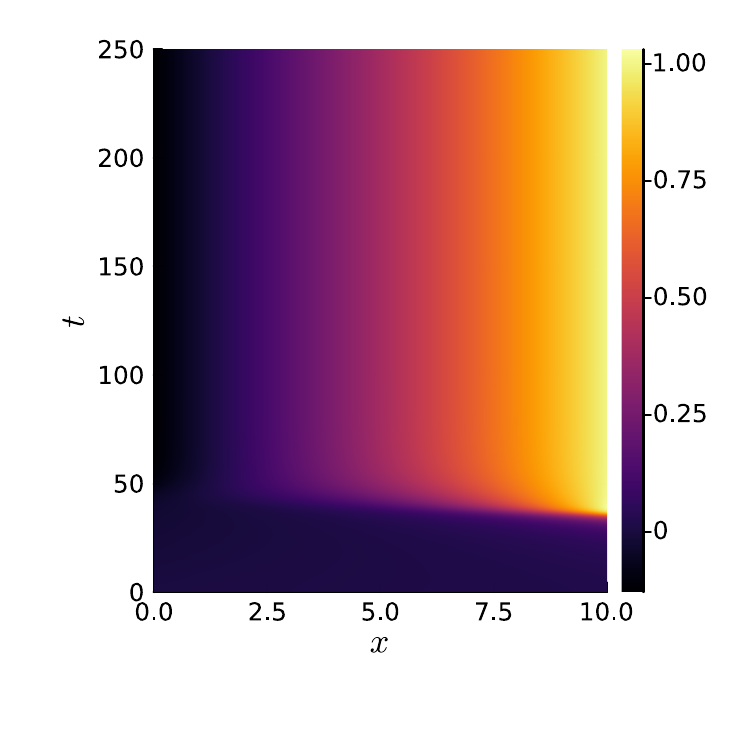}
    \end{subfigure}
    \begin{subfigure}{0.32\textwidth}
    \includegraphics[width=1\textwidth]{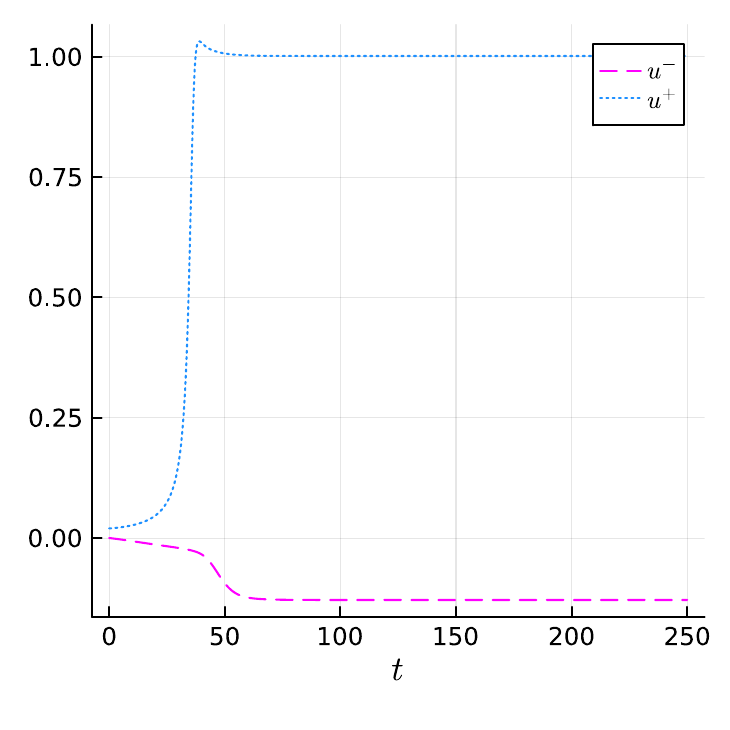}
    \end{subfigure}
    \begin{subfigure}{0.32\textwidth}
    \includegraphics[width=1\textwidth]{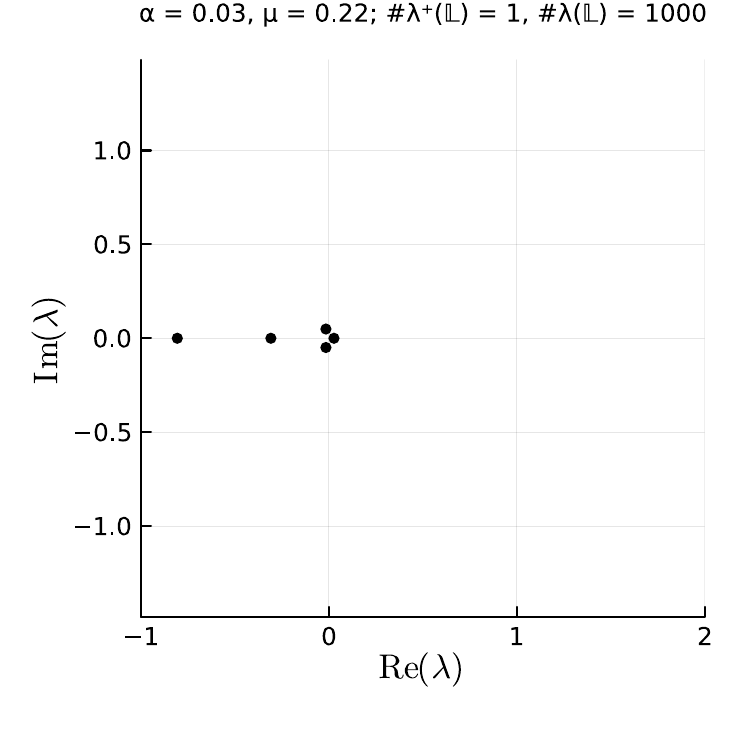}
    \end{subfigure}
    \begin{subfigure}{0.32\textwidth}
    \includegraphics[width=1\textwidth]{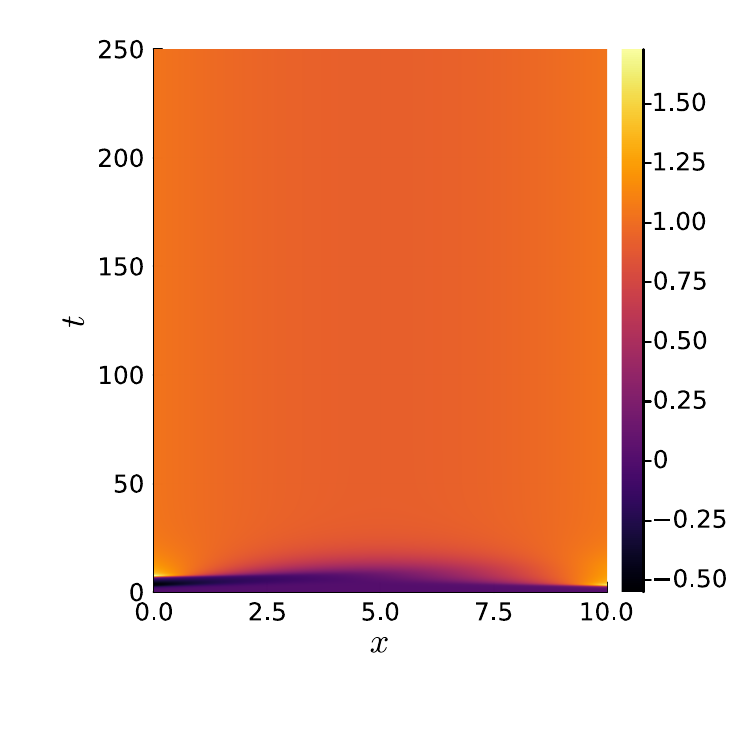}
    \end{subfigure}
    \begin{subfigure}{0.32\textwidth}
    \includegraphics[width=1\textwidth]{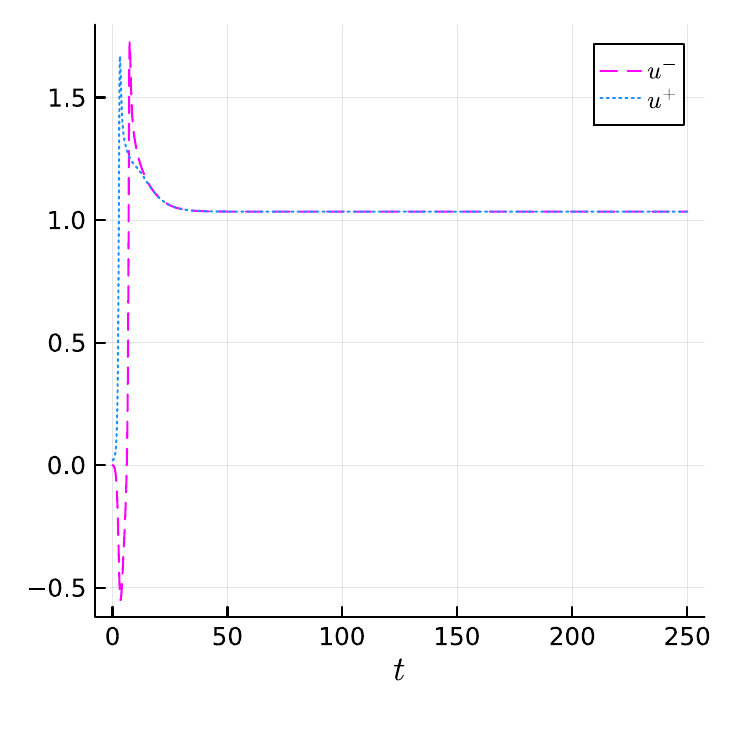}
    \end{subfigure}
    \begin{subfigure}{0.32\textwidth}
    \includegraphics[width=1\textwidth]{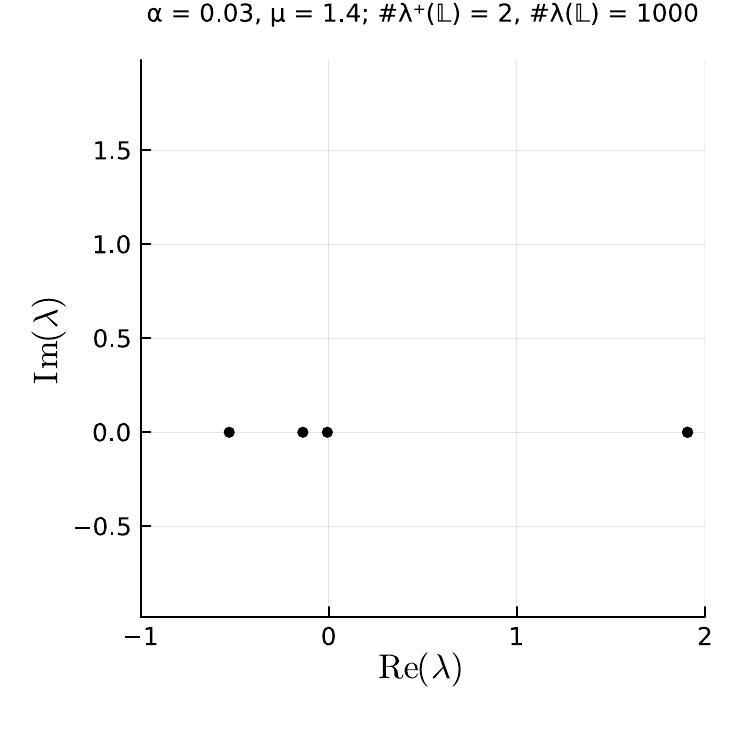}
    \end{subfigure}
    \begin{subfigure}{0.32\textwidth}
    \includegraphics[width=1\textwidth]{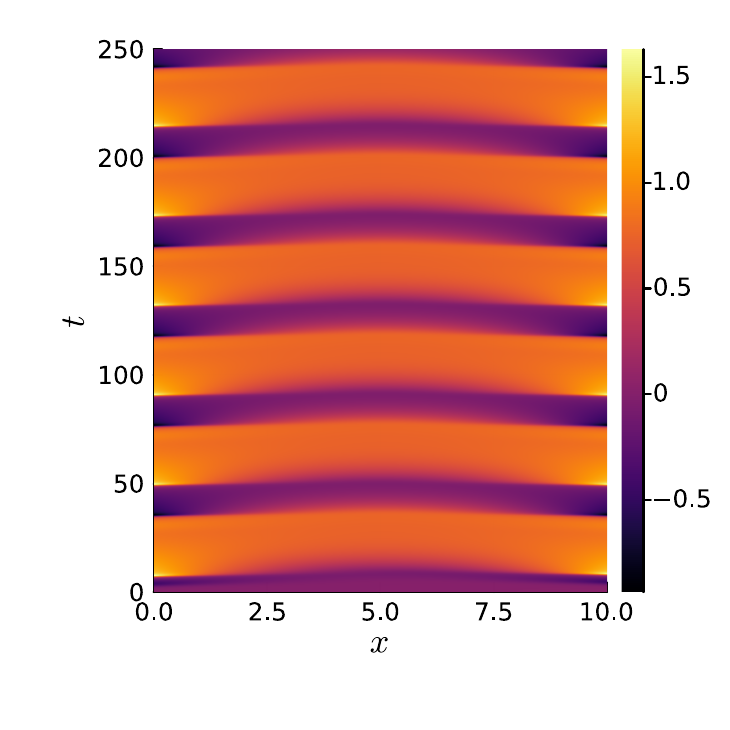}
    \end{subfigure}
    \begin{subfigure}{0.32\textwidth}
    \includegraphics[width=1\textwidth]{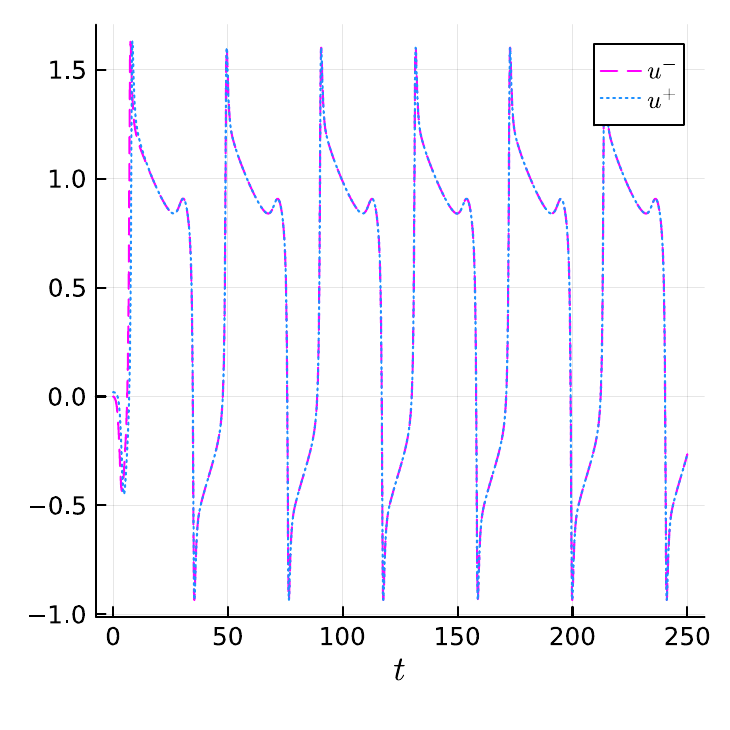}
    \end{subfigure}
    \begin{subfigure}{0.32\textwidth}
    \includegraphics[width=1\textwidth]{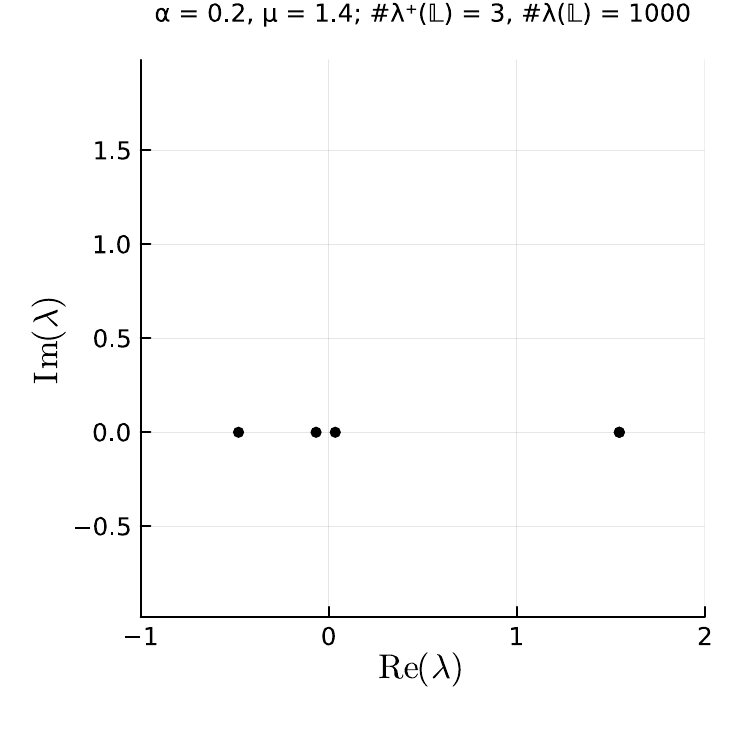}
    \end{subfigure}
    \caption{For $(\beta, \gamma, L, \sigma) =(1, -1, 10, 0.1)$, numerical spatiotemporal solution in the bulk domain $(0, L)$ ($u$; left column) and at the boundary points ($u(t,0)=u^-$ and $u(t,L)=u^+$; center column) and the eigenvalues of the linear numerical operator $\mathbb{L}$ with biggest real part ($\{\lambda\in\mathrm{spec}(\mathbb{L})\,|\, \Re(\lambda) > -1\}$; right column). The number of eigenvalues $\lambda$ of $\mathbb{L}$ with $\Re(\lambda)>0$ is denoted by $\#\lambda^+(\mathbb{L})$, while the overall number of eigenvalues of $\mathbb{L}$ is denoted by $\#\lambda(\mathbb{L})$.}
    \label{fig:timesol_beta1_part1}
\end{figure}

\begin{figure}[H]
\ContinuedFloat
\centering
\caption[]{From top to bottom, starting from  the black scattered point with white circle of Figure \ref{fig:Relambda_pos_i} and going counterclockwise: $(\alpha, \mu) = (0.5, 0.5)$, $(\alpha, \mu) = (0.03, 0.22)$, $(\alpha, \mu) = (0.03, 1.4)$, $(\alpha, \mu) = (0.2, 1.4)$. The initial condition is the symmetric base steady-state now with $u_*^s=0$ perturbed by the linear combination of a symmetric and antisymmetric perturbation with boundary values $\delta\cdot (1, 1) + \delta\cdot(1, -1)$ with $\delta=0.01$ and linear interpolation in between the boundary points. The number of grid points used were $N = 10^4$ for the spatial grid and  $M = 10^3$ for the temporal grid with final time $T = 250$.}
\end{figure}

\begin{figure}[]
    \centering
    \begin{subfigure}{0.32\textwidth}
    \includegraphics[width=1\textwidth]{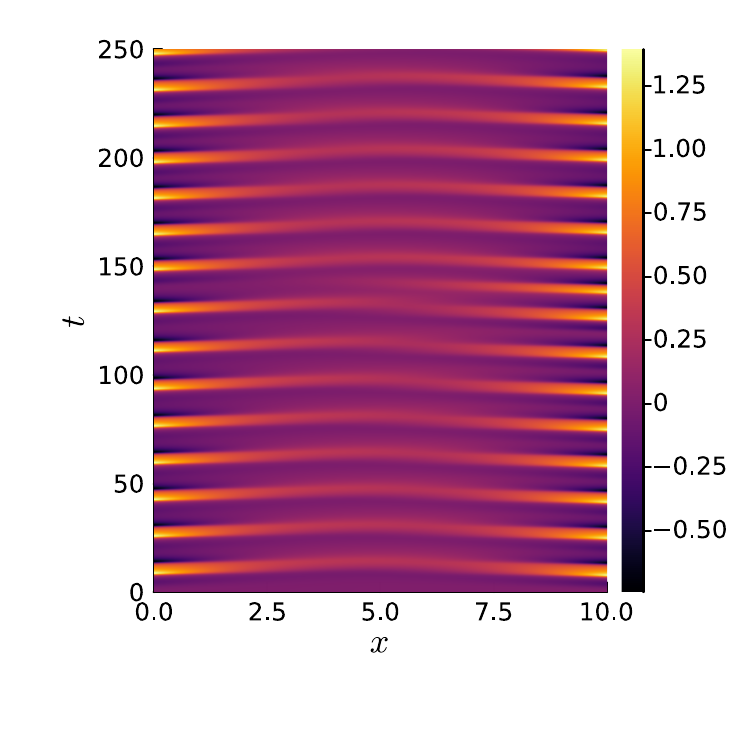}
    \end{subfigure}
    \begin{subfigure}{0.32\textwidth}
    \includegraphics[width=1\textwidth]{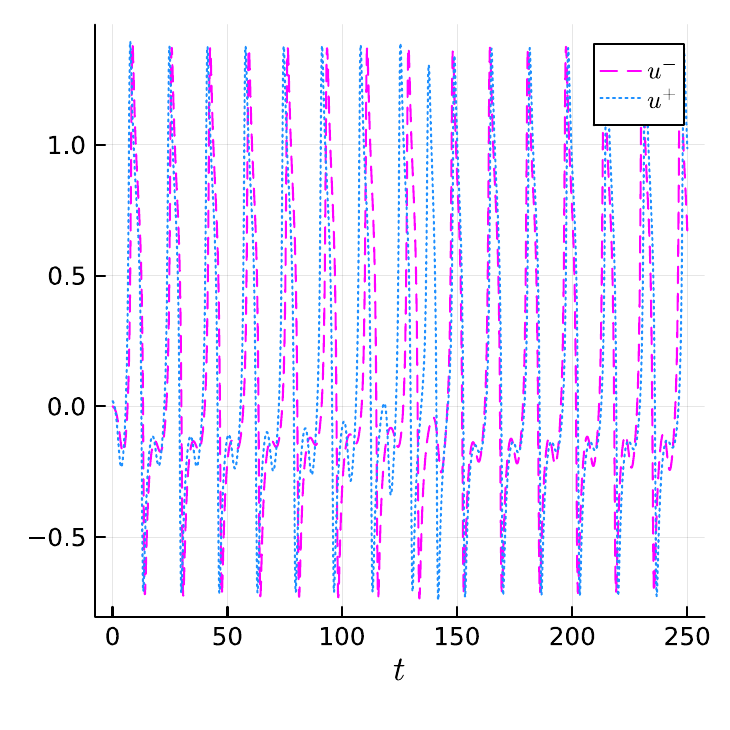}
    \end{subfigure}
    \begin{subfigure}{0.32\textwidth}
    \includegraphics[width=1\textwidth]{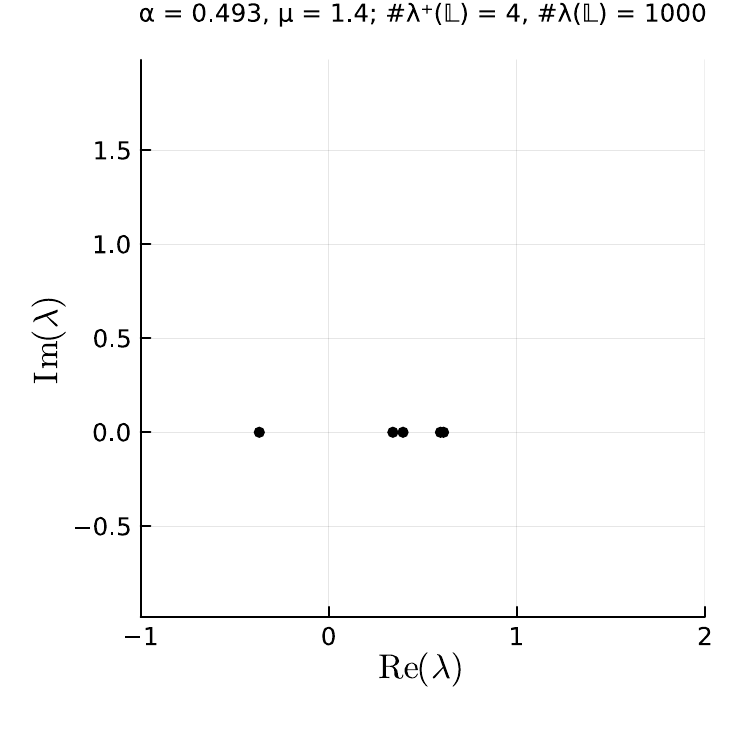}
    \end{subfigure}
    \begin{subfigure}{0.32\textwidth}
    \includegraphics[width=1\textwidth]{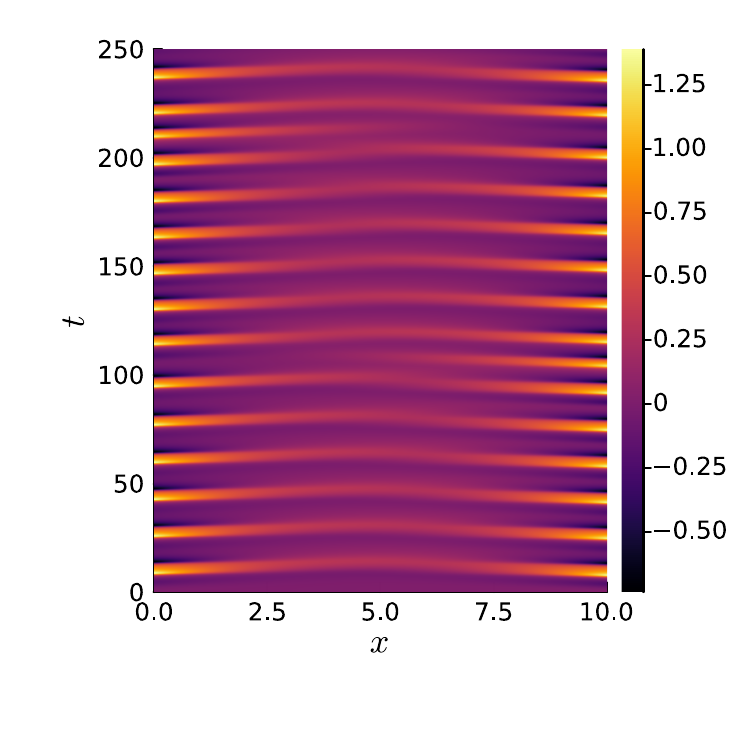}
    \end{subfigure}
    \begin{subfigure}{0.32\textwidth}
    \includegraphics[width=1\textwidth]{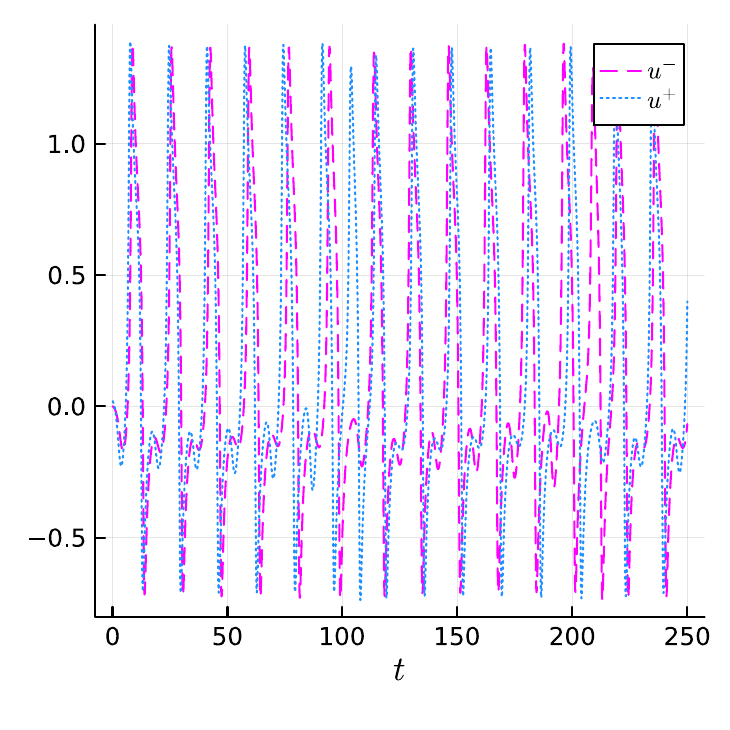}
    \end{subfigure}
    \begin{subfigure}{0.32\textwidth}
    \includegraphics[width=1\textwidth]{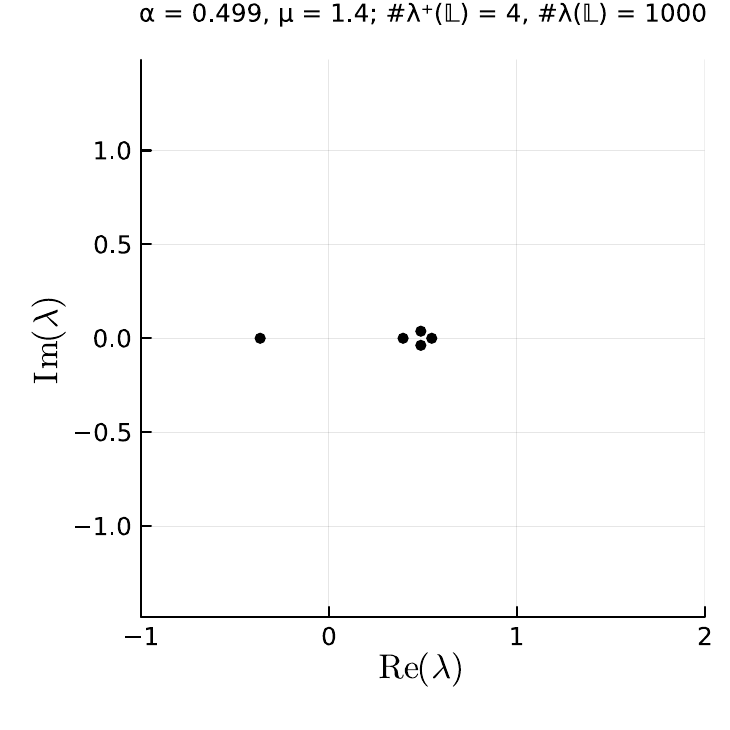}
    \end{subfigure}
    \begin{subfigure}{0.32\textwidth}
    \includegraphics[width=1\textwidth]{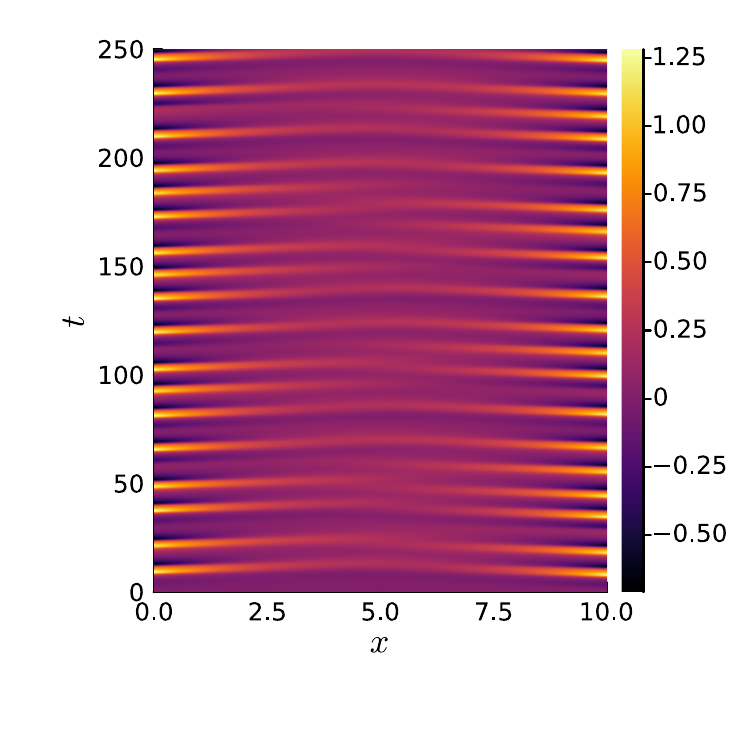}
    \end{subfigure}
    \begin{subfigure}{0.32\textwidth}
    \includegraphics[width=1\textwidth]{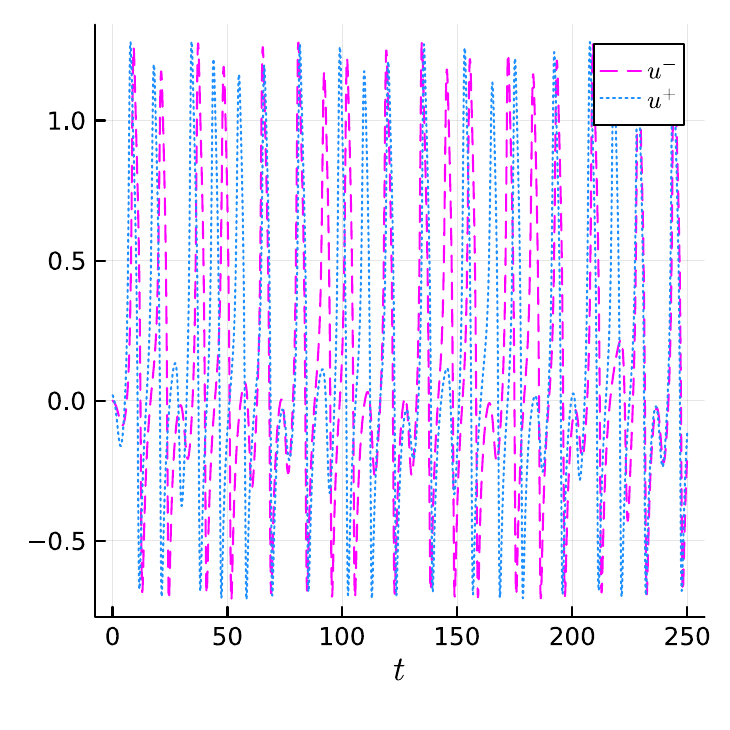}
    \end{subfigure}
    \begin{subfigure}{0.32\textwidth}
    \includegraphics[width=1\textwidth]{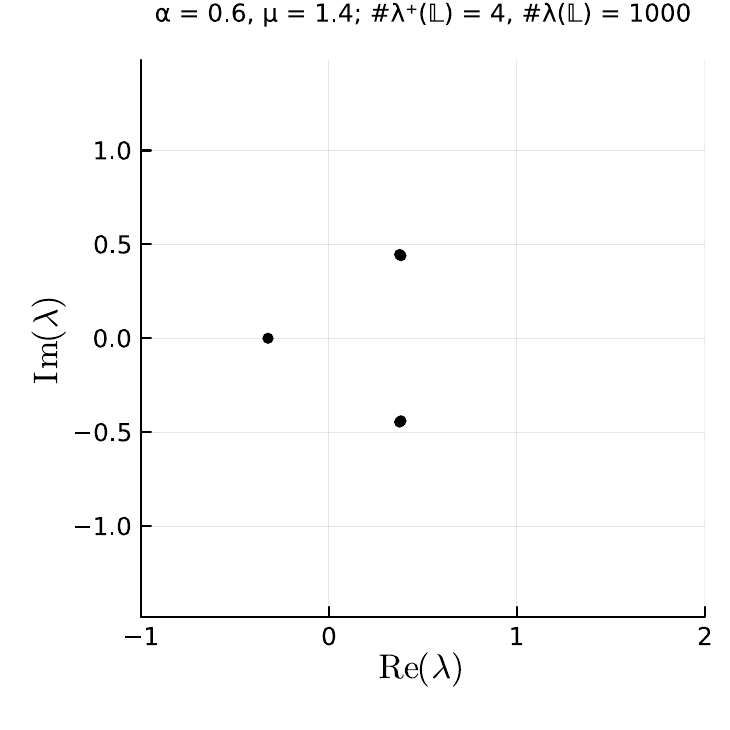}
    \end{subfigure}
        \begin{subfigure}{0.32\textwidth}
    \includegraphics[width=1\textwidth]{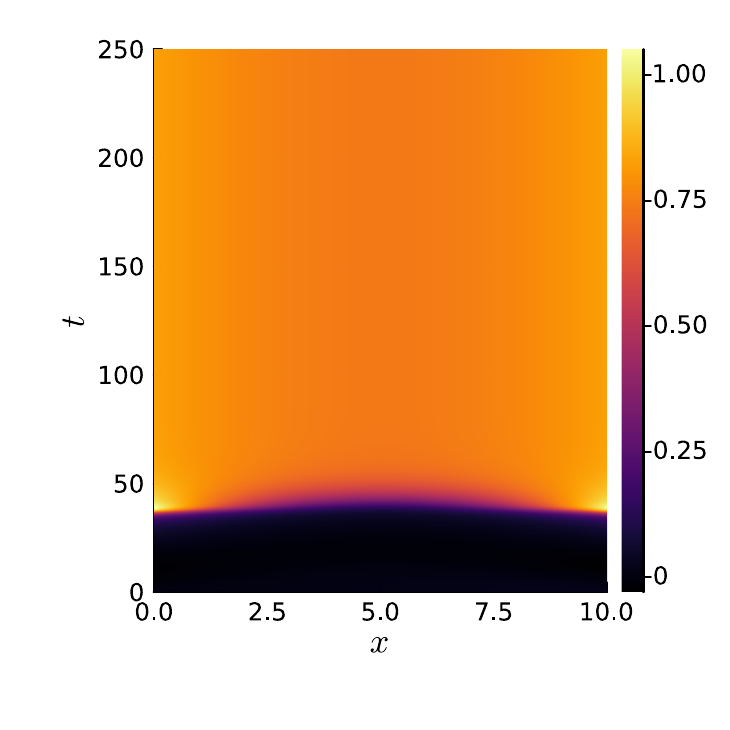}
    \end{subfigure}
    \begin{subfigure}{0.32\textwidth}
    \includegraphics[width=1\textwidth]{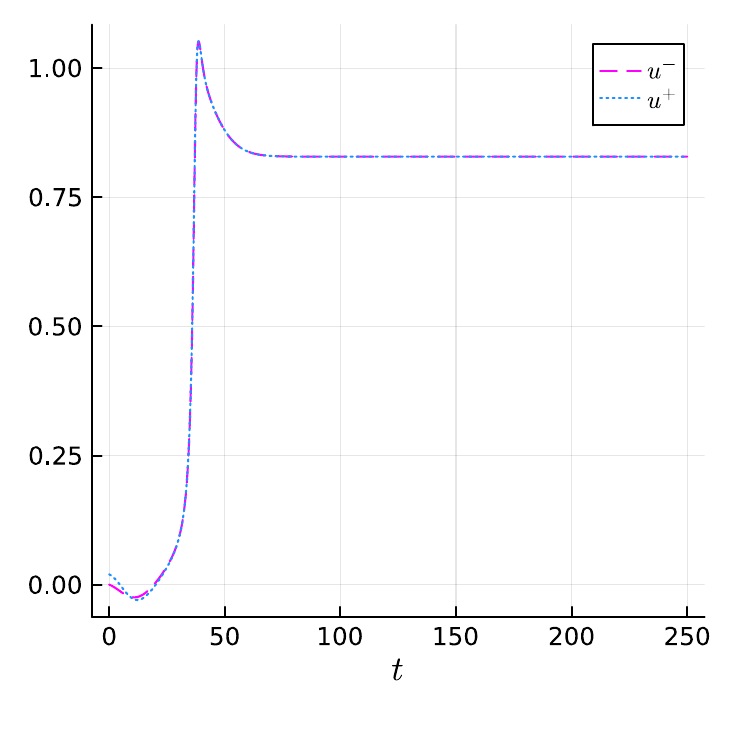}
    \end{subfigure}
    \begin{subfigure}{0.32\textwidth}
    \includegraphics[width=1\textwidth]{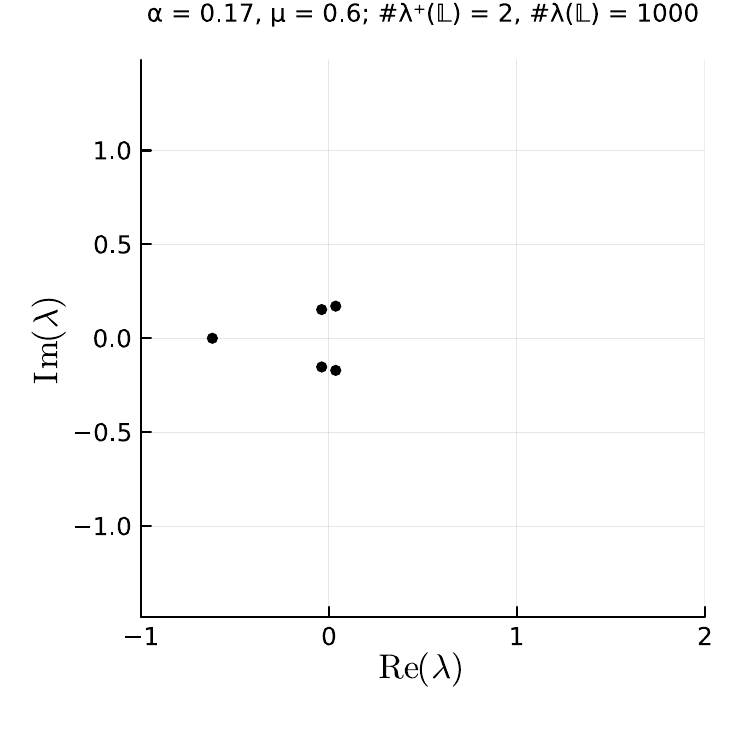}
    \end{subfigure}
    \caption{Same caption as for Figure \ref{fig:timesol_beta1_part1} but now from top to bottom: $(\alpha, \mu) = (0.493, 1.4)$, $(\alpha, \mu) = (0.499, 1.4)$, $(\alpha, \mu) = (0.6, 1.4)$, $(\alpha, \mu) = (0.17, 0.6)$. The interesting temporal defects that can be observed simply suggest that the real parts $\Re(\lambda)>0$ of the growth rate of the in-phase Hopf oscillation and of the anti-phase Hopf oscillation are of similar magnitude. Hence, in-phase and anti-phase oscillations mix with each other, creating the impression of a sum of two $\cos(\omega t)$ with slightly different oscillation frequency $\omega$.}
    \label{fig:timesol_beta1_part2}
\end{figure}

\paragraph{Hopf dance.}
The attentive reader may have noticed the intertwining and intersecting nature of the in-phase and anti-phase Hopf bifurcation curves in Figure \ref{fig:in-phase_Hopfbranches}. Such behavior has been called ``Hopf dance'' by the authors of \cite{doelman2018destabilization} and the references it builds on. However, their underlying reaction-diffusion system in (1.1) and (1.2) is situated on the unbounded real line $\bbR$ and has nonlinear reaction kinetics at every point inside the domain that generates spatially periodic pulse solutions, opposed to the two discrete diffusively-coupled but separate locations where nonlinear reaction kinetics take place in system \eqref{eq:sys} herein that exhibit temporal periodicity. Furthermore, their Hopf dance is illustrated in (spatial wavenumber, bifurcation parameter)-space. The authors describe that the last (or first) destabilizing state to a spatially homogeneous base state is located at the point of simultaneous emergence of the intertwining Hopf curves in (spatial wavenumber, bifurcation parameter)-space where the spatial wavenumber has converged to zero. In contrast, in this study the Hopf curves emerge from the steady-state bifurcation curves that share their respective (symmetric or antisymmetric) symmetry at low norms for $(\alpha, \mu)$ and seem to come exponentially closer to each other for increasing norms for $(\alpha, \mu)$. Figure \ref{fig:hopfdance} shows a plot of the ``Hopf dance'' of system \eqref{eq:sys_tocubic} for $(L, \sigma) = (10, 0.1)$ that is visualized once through the difference of the in-phase with the anti-phase Hopf bifurcation lines and further through the natural logarithm of that difference. The latter can be used to get an approximate estimate of the frequency of the intertwining of the lines and the exponential rate with which the two Hopf lines approach each other as the bifurcation parameter $\mu$ increases.

System \eqref{eq:sys_tocubic} also exhibits homoclinic and double-heteroclinic limits, now in the temporal domain, when following the in-phase and anti-phase Hopf branches that emerge at a point on the intertwining Hopf curves (see Figures \ref{fig:in-phase_Hopfbranches} and \ref{fig:anti-phase_Hopfbranches}, except for the case of $(\alpha, \beta, \gamma) = (0.4, 0, -1)$). It would be highly interesting to connect the theory of \cite{doelman2018destabilization} to systems with diffusively coupled discrete active structures, e.g., the boundaries of \eqref{eq:sys}.
\begin{figure}[H]
    \centering
    \begin{subfigure}{0.37\textwidth}
    \includegraphics[width=1\textwidth]{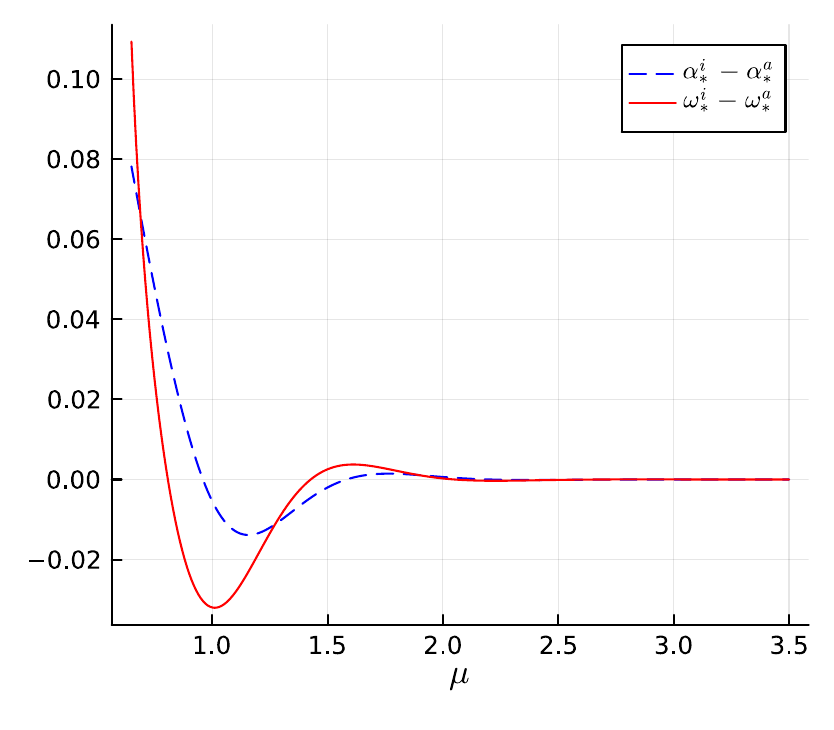}
    \end{subfigure}
    \begin{subfigure}{0.37\textwidth}
    \includegraphics[width=1\textwidth]{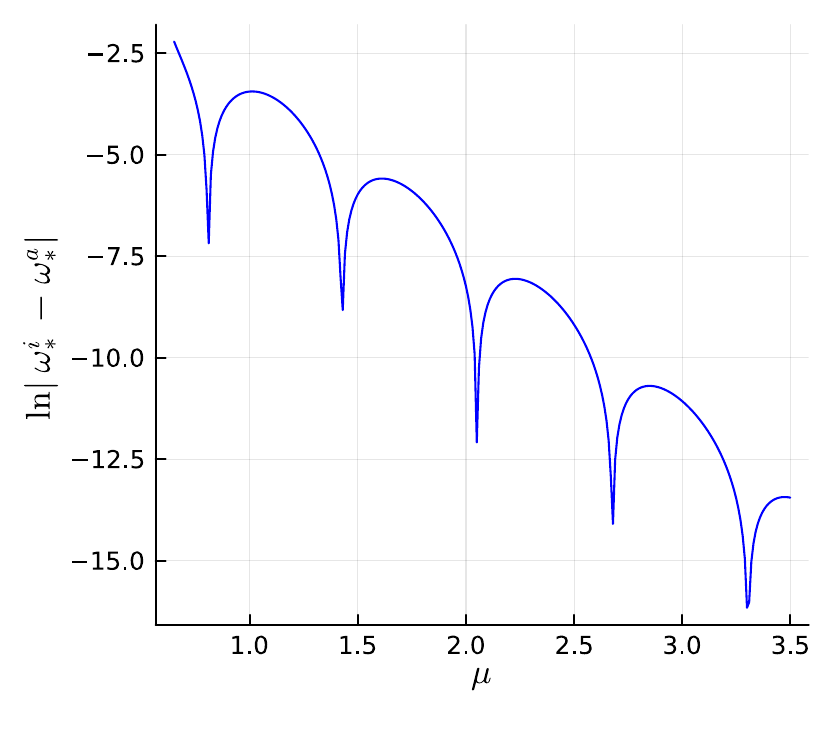}
    \end{subfigure}
    \caption{Left: Visualization of the intertwining nature of the in-phase and anti-phase Hopf bifurcation lines through their difference either in $(\mu, \alpha_*)$-space that can be compared with Figure \ref{fig:in-phase_Hopfbranches} or in $(\mu, \omega_*)$-space. Right: Plot of the natural logarithm of the difference, from which the frequency and exponential convergence rate of the Hopf lines toward each other can be approximately read off. The intertwining nature of the Hopf lines is coined ``Hopf dance'' in \cite{doelman2018destabilization}, where the authors have spectrally analyzed this seemingly general phenomenon thoroughly in the case of a class of reaction-diffusion systems with pulse solution with spatial periodicity.}
    \label{fig:hopfdance}
\end{figure}

\paragraph{Connection to Schrödinger/Gross-Pitaevskii equations.}
It would be interesting to find connections of system \eqref{eq:sys} with the class of Schrödinger/Gross-Pitaevskii equations with a simple double-well potential of \cite{kirr2008symmetry}. The authors' bifurcation diagram and that of Figure \ref{fig:Relambda_pos_i} look similar while their system exhibits an interaction of the two minima of the potential with each other, similar to the boundary interactions in \eqref{eq:sys}. Making the ansatz $\psi(t, x) = u(x) e^{it}$ transforms their equation into a reaction-diffusion steady-state problem for $u$ that can be compared to the steady-state problem for \eqref{eq:sys}. The authors highlight an interesting bifurcation from a symmetric steady-state to an asymmetric (mixed) steady-state through destabilizing antisymmetric perturbations. Their results have to be interpreted as a bifurcation from a symmetric oscillation to a stable asymmetric oscillation, as opposed to steady-states with the corresponding symmetry. See Figure  for the numerical spatiotemporal solution for $(\alpha, \beta, \gamma, L, \mu, \sigma) = (0.03, 1, -1, 10, 0.22, 0.1)$ (red scattered point with black circle in first row left plot of Figure \ref{fig:Relambda_pos_i}).

\begin{figure}[H]
    \centering
    \begin{subfigure}{0.32\textwidth}
    \includegraphics[width=1\textwidth]{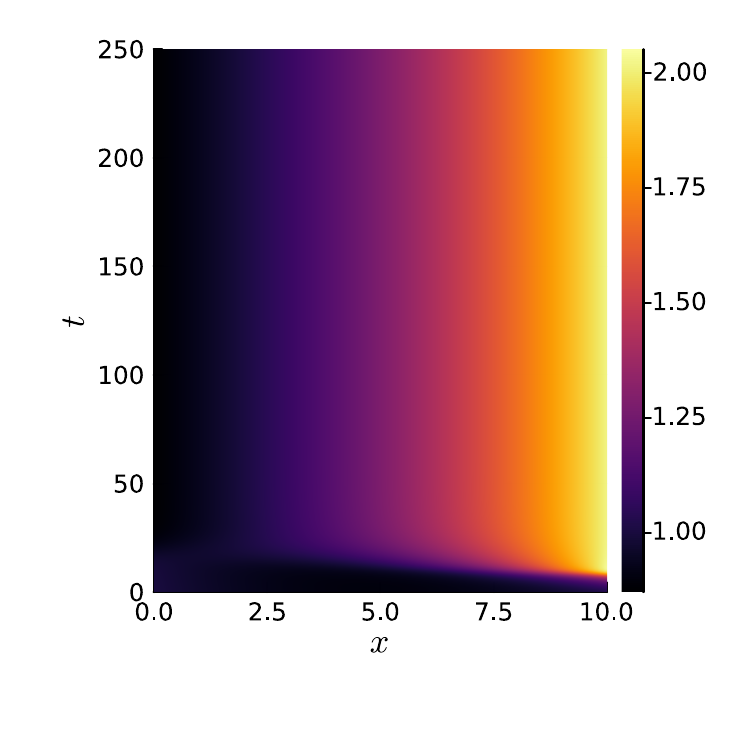}
    \end{subfigure}
    \begin{subfigure}{0.32\textwidth}
    \includegraphics[width=1\textwidth]{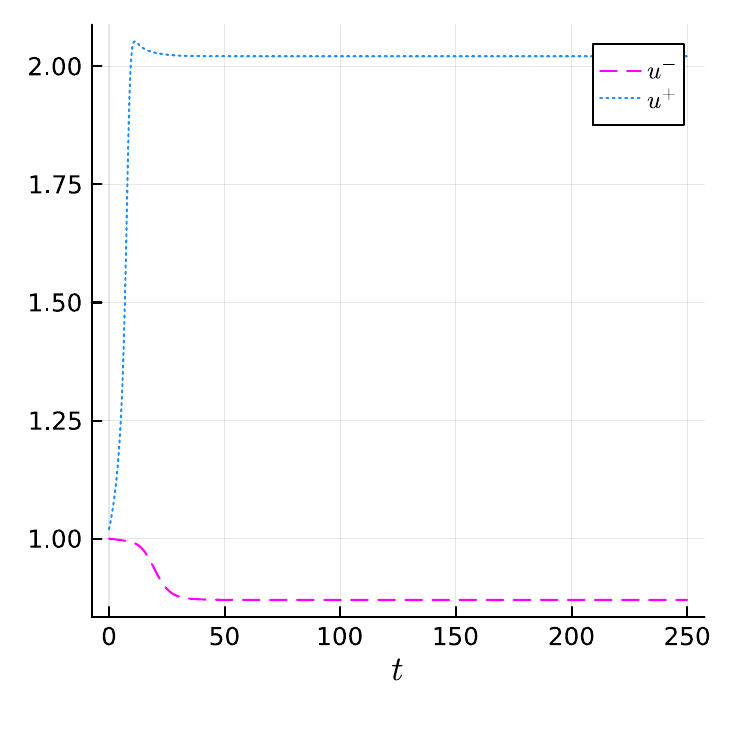}
    \end{subfigure}
    \begin{subfigure}{0.32\textwidth}
    \includegraphics[width=1\textwidth]{figures/timesol_beta1/spec_alph0.03_bet1.0_gam-1.0_L10.0_mu0.22_sig0.1_us0.0_pnt2.pdf}
    \end{subfigure}
    \caption{The numerical spatiotemporal solution converges to the asymmetric (mixed symmetric and antisymmetric) steady-state branch that emerges from the symmetric base steady-state with $u_*^s = 1$. 
    The parameters are $(\alpha, \beta, \gamma, L, \mu, \sigma) = (0.03, 1, -1, 10, 0.22, 0.1)$, corresponding to the red scattered point with black circle in first row left plot of Figure \ref{fig:Relambda_pos_i} and to the second row of Figure \ref{fig:timesol_beta1_part1} with $\mu=0.22 > 0.13864 \approx \mu_*^{as}$. Similar to Figures \ref{fig:timesol_beta1_part1} and \ref{fig:timesol_beta1_part2}, the initial condition is the symmetric base steady-state now with $u_*^s=1$ perturbed by the linear combination of a symmetric and antisymmetric perturbation with boundary values $\delta\cdot (1, 1) + \delta\cdot(1, -1)$ with $\delta=0.01$ and linear interpolation in between the boundary points.}
\end{figure}

\section{Discussion} \label{sec:discussion}

\paragraph{Summary \& further steps.}
In this study two dynamic (or Wentzell) boundaries coupled with each other through linear diffusion with degradation in the bulk has been analyzed with respect to bifurcations from a symmetric steady-state to both symmetric and asymmetric steady-states and to both in-phase and anti-phase Hopf oscillations. The existence and uniqueness (up to phase shifts) of both oscillations has been proven using the three developed foundational Assumptions 1--3 for the general nonlinear system \eqref{eq:sys}. For general cubically expanded reaction kinetics that are equal on both dynamic boundaries, leading-order expansions of the shape of the bifurcation branches, for steady-state and Hopf oscillations, close to their bifurcation point from the symmetric base steady-state along with the stability change of the base branch at the respective bifurcation points have been computed. The stability properties of the symmetric and asymmetric steady-state bifurcation branches have been derived rigorously and the stability properties of the in-phase and anti-phase Hopf branches rest on the key conjecture that they have opposite stability than the base branch at the respective value of the bifurcation parameter $\mu$ close to bifurcation onset. It is outstanding and interesting to prove (or disprove) this usually correct conjecture in an elegant and minimal way, potentially without direct expansions of the boundary-integral equations in the oscillation amplitudes. All numerical illustrations (parameter bubbles to stable steady-state and Hopf oscillations with magnitude of growth rates, critical curves of transition from sub- to supercritical Hopf branches, shape of nonlinear Hopf branches until far from onset, and full spatiotemporal numerical solutions showing competition or rather mixing of in-phase and anti-phase Hopf oscillations causing the occurrence of temporal defects) confirm both the proved theory and the stability conjecture, again for the cubically expanded reaction kinetics on the boundaries. Furthermore, the competition between stable steady-states and stable Hopf oscillations has been addressed (only in the linearized sense) through the proof that the symmetric steady-state gives birth to growing in-phase oscillations and the asymmetric steady-state gives birth to growing anti-phase oscillations, with the impossibility of coexistence of the stable steady-state and stable Hopf oscillation of the respective symmetry with respect to the linearized stability analysis. However, the nonlinearities usually change this behavior due to bending of the steady-state and Hopf branches, which provides an important direction for further study on the competition between the steady and oscillatory states. The in-phase and anti-phase Hopf bifurcation lines in parameter space have been shown numerically to intertwine in an oscillatory manner, coined \emph{Hopf dance} in previous literature.

\paragraph{Anti-flux and systems of equations.}
Both the shape of the illustrating Hopf branches in Figures \ref{fig:in-phase_Hopfbranches} (in-phase) and \ref{fig:anti-phase_Hopfbranches} (anti-phase) and the bifurcation bubbles, i.e., parameter regions of stable bifurcating steady-states and Hopf oscillations, to the linearized system of Figure \ref{fig:Relambda_pos_i} illustrate and the proof of Theorem \ref{thm:num_stable_sol} with Remark \ref{rmk:num_stable_sol} proves that there cannot be destabilizing bifurcations from the symmetric steady-state for the system with cubically expanded boundary reaction kinetics for $\mu\leq 0$. However, a usual diffusive flux balance with bulk diffusivity $d=1$ gives $\mu = -d = -1 < 0$ for bulk-surface systems with usual boundary conditions (cf. \cite{gomez2007self, levine2005membrane, gou2015synchronized, gomez2021pattern} and many more). Hence, precise mechanisms that lead to an effective dynamic boundary need to be found. In a different way, one could obtain the existence of destabilizing bifurcations to steady-states and to Hopf oscillations for $\mu \geq 0$ (flux instead of anti-flux) when including more than one abstract species that either diffuse on different time scales through different diffusivities in the bulk or that are differently attracted to the boundary by the dynamic boundary condition along the lines of \cite{rauch2004role, pelz2023emergence} (e.g., in the two species case with diffusing $u$ and $v$, two bifurcation parameters $\mu\geq 0$ multiplying $\partial_n u$ and $\nu\geq 0$ multiplying $\partial_n v$). 

\paragraph{Higher spatial dimensions.}
As mentioned in the preliminary study \cite{pelz2026oscillations}, going to higher spatial dimension would provide a fruitful path to connecting to real biological systems and their underlying dynamical laws. In 2-D, two disk-shaped compartments modeling two immobile cells without protrusions each with the same dynamic boundaries/membranes could be analyzed with respect to the formation of diffusively interacting ``spirals'' (oscillations along each disk's boundaries) raying away from each disk and synchronizing or pulsating in an in-phase or anti-phase manner. Analytically, one would deal with modified Bessel functions capturing the radial diffusive symmetry close to a single disk, which appear in the Dirichlet-to-Neumann and further differential operators that quantify the coupling between both disks.

\paragraph{Diffusive interaction of dynamic boundaries.}
Compared to a single dynamic boundary at $x=0$ coupled to a diffusive field on the half-line $(0, \infty)$ of the previous work \cite{pelz2026oscillations}, both the Hopf theorem \ref{theorem:hopf} together with the asymptotic theory and the in-phase and anti-phase bifurcation branches of Figures \ref{fig:in-phase_Hopfbranches} and \ref{fig:anti-phase_Hopfbranches} are highly similar. The higher tendency of turning asymptotically stable through a fold bifurcation observed in Figures \ref{fig:in-phase_Hopfbranches} and \ref{fig:anti-phase_Hopfbranches} just stems from the fact that the parameter value $\alpha = 0.49$ was chosen for this study while $\alpha = 1$ was taken in \cite{pelz2026oscillations}, due to the interesting numerical spatiotemporal solution at $(\alpha, \beta, \gamma) = (0.49, 1, -1)$ pictured in Figure \ref{fig:timesol_beta1_part2}. A true distinction between the single and coupled dynamic boundary case is that the essential spectrum of the single-boundary Fredholm operator is touching the imaginary axis for $\sigma_*=0$, which is not the case for the bounded domain $[0, L]$ with the two coupled bounding dynamical systems at $x=0$ and $x=L$ of this study. A bifurcation to oscillations seems to be obtained with similar ease in the coupled boundary case because the in-phase and anti-phase Hopf bifurcation points are close to the single-boundary Hopf bifurcation point $\mu_*=\sqrt{2\alpha}$ --- the in-phase bifurcation point being located at a slightly higher value for $\mu$ than $\mu_*$ and the anti-phase bifurcation point being slightly lower than $\mu_*$, with $\mu_*$ being symmetrically in the middle. This can be seen by noticing that
\begin{eqnarray*}
    c^i(\omega) &=& \sqrt{\sigma^2_* + i\omega}\;\tanh(\sqrt{\sigma^2_* + i\omega}\;\frac{L}{2}) \sim \sqrt{\sigma^2_* + i\omega} \;(1-2e^{-L\sqrt{\sigma^2_* + i\omega}}) + \cO(e^{-2L\sqrt{\sigma^2_* + i\omega}})\to \sqrt{\sigma^2_* + i\omega}, \\c^a(\omega) &=& \sqrt{\sigma^2_* + i\omega}\;\coth(\sqrt{\sigma^2_* + i\omega}\;\frac{L}{2}) \sim \sqrt{\sigma^2_* + i\omega} \;(1+2e^{-L\sqrt{\sigma^2_* + i\omega}}) + \cO(e^{-2L\sqrt{\sigma^2_* + i\omega}}) \to \sqrt{\sigma^2_* + i\omega}
\end{eqnarray*}
as $L\to \infty$ with the limits approached from opposite directions in $\bbC$, so that the in-phase and anti-phase Hopf frequencies and bifurcation points are converging to the single-boundary Hopf frequency $\omega_*=\sqrt{\alpha(\alpha-2\sigma_*^2)}$ and bifurcation point $\mu_* = \sqrt{2\alpha}$ from opposite directions in $\bbR$. Using the above expansions of $c^i$ and $c^a$, the difference of the in-phase Hopf and anti-phase Hopf frequencies behaves like
\begin{equation*}
    \omega_*^i - \omega_*^a = \alpha \frac{-\Im(e^{-Lz})|z|^2}{\frac{1}{4}\Re(z)^2 - \Re(z)^2\Re(e^{-Lz})^2+\Im(z)^2\Im(e^{-Lz})^2} 
\end{equation*}
with $z=\sqrt{\sigma_*^2+i\omega_*}$, so that $\omega_*^i - \omega_*^a \sim \cO(|e^{-Lz}|)$, and the difference of the bifurcation points behaves like
\begin{equation*}
    \mu_*^i - \mu_*^a = \alpha \frac{\Re(z)\Re(e^{-Lz}) - \Im(z)\Im(e^{-Lz})}{\frac{1}{4}\Re(z)^2 - \Re(z)^2\Re(e^{-Lz})^2+\Im(z)^2\Im(e^{-Lz})^2},
\end{equation*}
so that $\mu_*^i - \mu_*^a \sim \cO(|e^{-Lz}|)$. Continuing the Hopf bifurcation branches of the single-boundary system of \cite{pelz2026oscillations} at all scattered points of Figure \ref{fig:critcurve} with $\alpha = 0.49$, $L=10$, and $\sigma = 0.1$, does not consistently show that the in-phase and anti-phase branches are sandwiching the sole bifurcation branch for the single-boundary case further away from bifurcation onset. It would be a curiosity to study how the double-boundary in-phase and anti-phase Hopf branches are ``dancing'' about the single-boundary Hopf branch when walking along the Hopf dance described in Section \ref{sec:numericalasympt}.

\paragraph{Quantification of diffusive delay.}
In an attempt to explicitly quantify the diffusive delay in information transmission from one boundary to another through diffusion and the gradient term within the dynamic boundary equation, one can proceed similarly as for Proposition 1 of \cite{pelz2023emergence}. Close to the boundaries, $u$ can be expanded as $u_b = B_b(t)(x-b) + u(t, b)$ for $b\in\{0, L\}$ and functions $B_b$ to be determined via matching to the global solution satisfying
\begin{eqnarray*}
    \partial_t u = \partial_{xx} u - \sigma^2u, \quad x\in(0, L) \\
    u \sim B_b(t) (x-b) + u(t, b) \quad as \quad x\to b,
\end{eqnarray*}
where I have chosen the initial condition $u(0, x)\equiv 0$ for simplicity. The auxiliary free-space Green function problem 
\begin{eqnarray*}
    \partial_t v_b = \partial_{xx} v_b - \sigma^2 v_b - 2B_b(t)\delta(x-b), \quad x\in\bbR\setminus\{b\}, \\
    v_b\to 0 \quad \text{as} \quad |x|\to \infty \qquad \qquad \qquad
\end{eqnarray*}
with $v_b(0, x)\equiv 0$ can match with its bounded solution to the local expansions at the boundaries since $v_b \sim B_b(t)(x-b) + R_b(t) \quad \text{as} \quad x \to b$ with a function $R_b$ to be determined for $b\in\{0, L\}$. The bounded solution can easily be computed to be
\begin{equation*}
    v_b(t, x) =-\int_0^t B_b(t-\tau) \frac{\exp(-\sigma^2\tau-\frac{|x-b|^2}{4\tau})}{\sqrt{\pi\tau}}\; d\tau = B_b(t)(x-b) - \int_0^t B_b(t-\tau) \frac{e^{-\sigma^2\tau}}{\sqrt{\pi\tau}} + \cO(|x-b|^2),
\end{equation*}
which is expanded with the help of the Laplace transform and convolution theorem. It can be read off that $R_b = - \int_0^t B_b(t-\tau) \frac{e^{-\sigma^2\tau}}{\sqrt{\pi\tau}}\;d\tau$. Then the global solution is given by $u = v_0+v_L$, and matching to the local expansions $u_b$ leads to $R_0+v_L=u^-$ and $R_L+v_0=u^+$. In this way, the boundary dynamics are given by  
\begin{eqnarray*}
    \frac{du^-}{dt} &=& \fint(u^-) - \mu B_0(t) \quad \text{where} \quad -\int_0^t \frac{e^{-\sigma^2\tau}}{\sqrt{\pi\tau}}(B_0(t-\tau) + B_L(t-\tau) e^{-\frac{(x-L)^2}{4\tau}})\;d\tau = u^-, \\
    \frac{du^+}{dt} &=& \fint(u^+) + \mu B_L(t) \quad \text{where} \quad -\int_0^t \frac{e^{-\sigma^2\tau}}{\sqrt{\pi\tau}}(B_L(t-\tau) + B_0(t-\tau) e^{-\frac{x^2}{4\tau}})\;d\tau = u^+.
\end{eqnarray*}
The delay is thus precisely incorporated in the coupling functions $B_0$ and $B_L$ that are determined by the integral equations --- more details along these lines to follow at a later time.

\paragraph{Acknowledgments.} 
I would like to thank Arnd Scheel for helpful discussions. I am further grateful for Joel Dahne's tip to look into the Mittag-Leffler expansion of $\tanh$. I am acknowledging help from the artificial intelligence tool Claude Fable 5 by Anthropic for the proof of $\partial|_{\mu=\mu_*^{i|a}}\lambda^{i|a}>0$ in Theorem \ref{thm:leading-orderHopf} and for bounding the winding numbers $\Delta_\Gamma \mathrm{arg}(\dm^{i|a})$ of Theorem \ref{thm:num_stable_sol}. Artificial intelligence has not been used for the remaining parts of this study.

\paragraph{Competing Interests.} The author declares that there are no competing interests.

\bibliographystyle{abbrv}
\bibliography{refs}

\end{document}